\theoremstyle{plain}
\newtheorem{theorem}{Theorem}[section]
\newtheorem{lemma}[theorem]{Lemma}
\newtheorem{proposition}[theorem]{Proposition}
\newtheorem{corollary}[theorem]{Corollary}
\theoremstyle{definition}
\newtheorem{definition}[theorem]{Definition}
\newtheorem{example}[theorem]{Example}
\theoremstyle{remark}
\begin{document}
\title{CHARACTERIZATIONS OF HEMIRINGS BY THEIR $h$-IDEALS}
\author{Wieslaw A. Dudek}
\address{Institute of Mathematics and Computer Science, Wroclaw University
of Technology, 50-370 Wroclaw, Poland}
\email{dudek@im.pwr.wroc.pl}
\author{Muhammad Shabir}
\address{Department of Mathematics, Quaid-i-Azam University Islamabad.
Pakistan}
\email{mshabirbhatti@yahoo.co.uk}
\author{Rukhshanda Anjum}
\address{Department of Mathematics, Quaid-i-Azam University Islamabad.
Pakistan}
\email{rukh\_28@yahoo.com }

\begin{abstract}
In this paper we characterize hemirings in which all $h$-ideals or
all fuzzy $h$-ideals are idempotent. It is proved, among other
results, that every $h$-ideal of a hemiring $R$ is idempotent if
and only if the lattice of fuzzy $h$-ideals of $R$ is distributive
under the sum and $h$-intrinsic product of fuzzy $h$-ideals or, equivalently,
if and only if each fuzzy $h$-ideal of $R$ is intersection of those prime fuzzy
$h$-ideals of $R$ which contain it. We also define two types of
prime fuzzy $h$-ideals of $R$ and prove that, a non-constant
$h$-ideal of $R$ is prime in the second sense if and only if each of
its proper level set is a prime $h$-ideal of $R$.
\end{abstract}

\keywords{Prime $h$-ideal, fuzzy prime $h$-ideal, irreducible
$h$-ideal, fuzzy irreducible $h$-ideal.} \maketitle

%%%%%%%%%%%%%%%%%%%%%%%%%%%%%%%%%%%%%%%%%%%%%%%%%%%%%%%%%%%%%%%%%

\section{Introduction}

%%%%%%%%%%%%%%%%%%%%%%%%%%%%%%%%%%%%%%%%%%%%%%%%%%%%%%%%%%%%%%%%%
The notion of semiring was introduced by H. S. Vandiver in 1934 \cite{22}.
Semirings which provide a common generalization of rings and distributive
lattices appear in a natural manner in some applications to the theory of
automata, formal languages, optimization theory and other branches of
applied mathematics (see for example \cite{3,7,9,13,18}). Hemirings, as
semirings with commutative addition and zero element, have also proved to be
an important algebraic tool in theoretical computer science (see for
instance \cite{5,12}). Some other applications of semirings with references
can be found in \cite{11,12,13}. On the other hand, the notions of automata
and formal languages have been generalized and extensively studied in a
fuzzy frame work (cf. \cite{20,21,23}).

Ideals play an important role in the structure theory of hemirings and are
useful for many purposes. But they do not coincide with usual ring ideals.
For this reason many results in ring theory have no analogues in semirings
using only ideals. Henriksen defined in \cite{14} a more restricted class of
ideals in semirings, which is called the class of $k$-ideals. A more
restricted class of ideals has been given by Iizuka \cite{15}. However, in
an additively commutative semiring $R$, ideals of a semiring coincide with
ideals of a ring, provided that a semiring is a hemiring. Now we call this
ideal an $h$-ideal of a hemiring.

The notion of fuzzy sets was introduced by Zadeh \cite{25}. Later
it was applied to many branches of mathematics. Investigations of
fuzzy semirings were initiated in \cite{2} and \cite{1}. Fuzzy
$k$-ideals are studied in \cite{10,17,4}. Fuzzy $h$-ideals of a
hemiring are studied by many authors, for example
\cite{16,26,27,8,19,Ma1,Ma2}. In this paper we characterize
hemirings in which each $h$-ideal is idempotent. We also
characterize hemirings for which each fuzzy $h$-ideal is
idempotent.

%%%%%%%%%%%%%%%%%%%%%%%%%%%%%%%%%%%%%%%%%%%%%%%%%%%%%%%%%%%%%%%%%%

\section{Preliminaries}

%%%%%%%%%%%%%%%%%%%%%%%%%%%%%%%%%%%%%%%%%%%%%%%%%%%%%%%%%%%%%%%%%%
Recall that a \textit{semiring} is an algebraic system $(R,+,\cdot)$
consisting of a non-empty set $R$ together with two binary operations on $R$
called addition and multiplication (denoted in the usual manner) such that $%
(R,+)$ and $(R,\cdot )$ are semigroups and the following distributive laws:
\begin{equation*}
a\cdot \left( b+c\right) =a\cdot b+a\cdot c,\text{ and }\left( b+c\right)
\cdot a=b\cdot a+c\cdot a
\end{equation*}
are satisfied for all $a,b,c\in R.$

A semiring $\left( R,+,\cdot \right) $ is called a \textit{hemiring} if $%
(R,+)$ is a commutative semigroup with a \textit{zero}, i.e., with an
element $0\in R$ such that $a+0=0+a=a$ and $a\cdot 0=0\cdot a=0$ for all $%
a\in R$. By the \textit{identity} of a hemiring $(R,+,\cdot )$ we mean an
element $1\in R$ (if it exists) such that $1\cdot a=a\cdot 1=a$ for all $%
a\in R.$

A hemiring $\left( R,+,\cdot \right)$ with a commutative semigroup $%
(R,\cdot) $ is called \textit{commutative}.

A non-empty subset $I$ of a hemiring $R$ is called a \textit{left $($right$)$
ideal} of $R$ if $\left( i\right) $ $a+b\in I$ for all $a,b\in I$ and $%
\left( ii\right) $ $ra\in I\left( ar\in I\right) $ for all $a\in I$, $r\in
R. $ Obviously $0\in I$ for any left (right) ideal $I$ of $R.$

A non-empty subset $A$ of a hemiring $R$ is called an \textit{ideal} of $R$
if it is both a left and a right ideal of $R$. A left (right) ideal $A$ of a
hemiring $R$ is called a \textit{left $($right$)$ $k$-ideal} of $R$ if for
any $a,b\in A$ and $x\in R $ from $x+a=b$ it follows $x\in A.$ A left
(right) ideal $I$ of a hemiring $R$ is called a \textit{left $($right$)$ $h$%
-ideal} of $R$ if for any $a,b\in I$ and $x,y\in R$ from $x+a+y=b+y$ it
follows $x\in I.$ Every left (right) $h$-ideal is a left (respectively,
right) $k$-ideal. The converse is not true \cite{17}.

\begin{lemma}
\label{L2.1} The intersection of any collection of left $($right$)$ $h$%
-ideals in a hemiring $R$ also is a left $($right$)$ $h$-ideal of $R$.
\end{lemma}

By $h$-closure of a non-empty subset $A$ of a hemiring $R$ we mean the set
\begin{equation*}
\overline{A}=\left\{ x\in R\mid x+a+y=b+y \ \text{ for some} \ a,b\in A,\
y\in R\right\}.
\end{equation*}

It is clear that if $A$ is a left (right) ideal of $R$, then $\overline{A}$
is the smallest left (right) $h$-ideal of $R$ containing $A$. So, $\overline{%
A}=A$ for all left (right) $h$-ideals of $R$. Obviously $\overline{\overline{%
A}}=\overline{A}$ for each non-empty $A\subseteq R.$ Also $\overline{A}%
\subseteq \overline{B}$ for all $A\subseteq B\subseteq R.$

\begin{lemma}
\cite{27}\label{L2.2} $\overline{AB}=\overline{\overline{A}\;\overline{B}}$
for any subsets $A$, $B$ of a hemiring $R$.
\end{lemma}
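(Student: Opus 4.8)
The plan is to prove the two inclusions separately, reducing the nontrivial one to a statement about single products. For $\overline{AB}\subseteq\overline{\overline{A}\,\overline{B}}$ I would argue purely by monotonicity of the $h$-closure: since $A\subseteq\overline{A}$ and $B\subseteq\overline{B}$, every generator $ab$ of $AB$ lies in $\overline{A}\,\overline{B}$, so $AB\subseteq\overline{A}\,\overline{B}$ and therefore $\overline{AB}\subseteq\overline{\overline{A}\,\overline{B}}$. This direction is immediate from the closure properties recorded before the lemma and needs no computation.

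For the reverse inclusion, the first reduction is to observe that it suffices to prove $\overline{A}\,\overline{B}\subseteq\overline{AB}$: taking the $h$-closure of both sides and using $\overline{\overline{AB}}=\overline{AB}$ then gives $\overline{\overline{A}\,\overline{B}}\subseteq\overline{\overline{AB}}=\overline{AB}$. The second reduction is to note that $AB$, and hence $\overline{AB}$, is closed under addition, so it is enough to show that a single product $xy$ with $x\in\overline{A}$ and $y\in\overline{B}$ belongs to $\overline{AB}$; a general element of $\overline{A}\,\overline{B}$ is a finite sum of such products and will then follow automatically.

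Now the core computation. By the definition of $h$-closure there are $a_1,a_2\in A$, $b_1,b_2\in B$ and $z,w\in R$ with $x+a_1+z=a_2+z$ and $y+b_1+w=b_2+w$. The idea is to multiply the first relation on the right by $y$, and the second relation on the left by $a_1$ and by $a_2$; this converts the mixed terms $a_1y,a_2y$ into genuine elements $a_ib_j$ of $AB$, at the cost of $R$-coefficients $a_iw$. Adding the three resulting relations and regrouping with the commutativity of $+$, I expect to land on an identity of the form $xy+p+t=q+t$ with $p=a_1b_2+a_2b_1\in AB$, $q=a_1b_1+a_2b_2\in AB$, and $t=a_1w+a_2w+zy\in R$, which is exactly the condition for $xy\in\overline{AB}$.

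The step I expect to be the main obstacle is controlling the residual term $zy$: since $z$ is merely an element of $R$, the product $zy$ lies in $R$ and cannot be absorbed into $p$ or $q$. The resolution is precisely that the definition of $\overline{\;\cdot\;}$ permits an arbitrary common summand $t\in R$ on both sides, so $zy$ (together with the $a_iw$) can be parked inside $t$ and never needs to be rewritten through $A$ and $B$. Once the bookkeeping is arranged so that the two $AB$-summands land on opposite sides and all the $R$-residuals coincide on both sides, membership $xy\in\overline{AB}$ follows directly, which by the reductions above completes the proof.
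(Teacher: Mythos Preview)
The paper does not prove this lemma; it simply cites \cite{27}, so there is no in-paper argument to compare against. Your approach is the standard one and is essentially correct, with one caveat worth flagging.

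Your reverse inclusion is sound. The three relations you set up (multiplying the $\overline{A}$-witness on the right by $y$, and the $\overline{B}$-witness on the left by $a_{1}$ and by $a_{2}$) combine by successive substitution to give precisely
\[
xy+(a_{1}b_{2}+a_{2}b_{1})+t=(a_{1}b_{1}+a_{2}b_{2})+t,\qquad t=zy+a_{1}w+a_{2}w,
\]
so $xy\in\overline{AB}$, and the extension to finite sums uses additive closure of $AB$ exactly as you say.

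The caveat concerns the forward inclusion. You invoke $A\subseteq\overline{A}$ and call it one of ``the closure properties recorded before the lemma'', but the paper records only monotonicity and idempotence of the bar, not $A\subseteq\overline{A}$. In fact this inclusion can fail for subsets not containing $0$: in the hemiring $\mathbb{N}$ one has $\overline{\{1\}}=\{0\}$, and with $A=\{1,2\}$, $B=\{1\}$ one computes $\overline{AB}=\mathbb{N}$ while $\overline{\overline{A}\,\overline{B}}=\{0\}$, so the lemma as literally stated for arbitrary subsets is false. The inclusion $A\subseteq\overline{A}$ \emph{does} hold whenever $0\in A$ (take $a=0$, $b=x$, any $y$ in the defining relation), hence in particular for ideals, which is the only setting in which the paper actually uses Lemma~\ref{L2.2}. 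Under that tacit hypothesis your argument is complete.
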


\begin{lemma}
\cite{27}\label{L2.3} If $A$ and $B$ are, respectively, right and left $h$%
-ideals of a hemiring $R$, then
\begin{equation*}
\overline{AB}\subseteq A\cap B.
\end{equation*}
\end{lemma}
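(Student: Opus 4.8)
The plan is to reduce the claim to the two separate containments $\overline{AB}\subseteq A$ and $\overline{AB}\subseteq B$, each of which follows from the ideal hypotheses together with the properties of the $h$-closure already recorded above. First I would unwind the meaning of the product $AB$, whose elements are the finite sums $\sum a_i b_i$ with $a_i\in A$ and $b_i\in B$, and note two elementary absorption facts. Since $A$ is a right ideal, each product $a_i b_i$ lies in $A$ (here $b_i$ plays the role of the element of $R$ acting on the right); since $B$ is a left ideal, each product $a_i b_i$ lies in $B$ (here $a_i$ plays the role of the element of $R$ acting on the left). Because $A$ and $B$ are each closed under addition, the finite sums remain inside, giving $AB\subseteq A$ and $AB\subseteq B$ simultaneously.

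Next I would pass to $h$-closures. The monotonicity of the closure operation, namely $\overline{X}\subseteq\overline{Y}$ whenever $X\subseteq Y$, was observed just after the definition of $h$-closure, so $AB\subseteq A$ yields $\overline{AB}\subseteq\overline{A}$ and $AB\subseteq B$ yields $\overline{AB}\subseteq\overline{B}$. Since $A$ is a right $h$-ideal and $B$ is a left $h$-ideal, the same remark gives $\overline{A}=A$ and $\overline{B}=B$, whence $\overline{AB}\subseteq A$ and $\overline{AB}\subseteq B$. Intersecting the two inclusions produces $\overline{AB}\subseteq A\cap B$, which is exactly the assertion.

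There is essentially no hard step here; the argument is bookkeeping. The only point demanding care is to match the side of each ideal to the side of the absorbed factor: the right ideal $A$ absorbs the right factor $b_i$ and so captures the products in $A$, while the left ideal $B$ absorbs the left factor $a_i$ and captures them in $B$. After that one invokes the fixed-point property $\overline{A}=A$, $\overline{B}=B$ of the closure on $h$-ideals, rather than trying to analyze $\overline{AB}$ directly from its defining condition $x+a+y=b+y$. An alternative route would first establish $AB\subseteq A\cap B$, then verify that $A\cap B$ is itself $h$-closed (if $x+c+y=d+y$ with $c,d\in A\cap B$, then $x\in A$ by the right $h$-property and $x\in B$ by the left $h$-property, so $x\in A\cap B$), and conclude $\overline{AB}\subseteq\overline{A\cap B}=A\cap B$; but the two-sided reduction above is shorter and sidesteps the need to check $h$-closedness of the intersection.
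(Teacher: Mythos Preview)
Your argument is correct. Note, however, that the paper does not supply its own proof of this lemma; it is simply quoted from \cite{27} without argument. Your reasoning --- establishing $AB\subseteq A$ and $AB\subseteq B$ from the one-sided ideal absorption properties and closure under addition, then invoking monotonicity of the $h$-closure together with $\overline{A}=A$, $\overline{B}=B$ for $h$-ideals --- is the standard route and is almost certainly what the cited reference does as well.
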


\begin{definition}
\cite{27} A hemiring $R$ is said to be \textit{$h$-hemiregular} if for each $%
a\in R$, there exist $x,y,z\in R$ such that $a+axa+z=aya+z$.
\end{definition}

\begin{lemma}
\cite{27}\label{L2.5} A hemiring $R$ is $h$-hemiregular if and only if for
any right $h$-ideal $A$ and any left $h$-ideal $B$, we have
\begin{equation*}
\overline{AB}=A\cap B.
\end{equation*}
\end{lemma}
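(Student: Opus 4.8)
The plan is to prove both implications, observing first that by Lemma~\ref{L2.3} the inclusion $\overline{AB}\subseteq A\cap B$ holds for \emph{every} right $h$-ideal $A$ and left $h$-ideal $B$, with no regularity hypothesis whatsoever. Consequently, in the forward direction only the reverse inclusion $A\cap B\subseteq\overline{AB}$ requires the assumption, and the converse is the substantive implication.

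For the forward direction, suppose $R$ is $h$-hemiregular and take $a\in A\cap B$. Applying the definition, I get $x,y,z\in R$ with $a+axa+z=aya+z$. Since $A$ is a right ideal and $a\in A$, we have $ax\in A$, so $axa=(ax)a\in AB$; likewise $aya=(ay)a\in AB$. The displayed equation then exhibits $a$ as a member of $\overline{AB}$, with witnesses $axa,aya\in AB$ and $z\in R$. This yields $A\cap B\subseteq\overline{AB}$, and together with Lemma~\ref{L2.3} the equality follows. I expect this direction to be routine.

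For the converse, fix $a\in R$; the goal is to produce $x,y,z$ with $a+axa+z=aya+z$. Let $A=\overline{\langle a\rangle_r}$ and $B=\overline{\langle a\rangle_l}$ be the right and left $h$-ideals generated by $a$, where $\langle a\rangle_r$ and $\langle a\rangle_l$ are the principal right and left ideals generated by $a$. Then $a\in A\cap B$, and by hypothesis together with Lemma~\ref{L2.2} we get $a\in A\cap B=\overline{AB}=\overline{\langle a\rangle_r\langle a\rangle_l}$. A short computation identifies $\langle a\rangle_r\langle a\rangle_l=aRa+\mathbb{Z}_{\ge0}a^2$, so membership of $a$ in its $h$-closure delivers a relation of the shape $a+Na^2+a\xi a+z=Ma^2+a\eta a+z$ for some $N,M\in\mathbb{Z}_{\ge0}$ and $\xi,\eta,z\in R$.

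The genuine difficulty is the terms $Na^2,Ma^2$: because $R$ need not possess an identity, $a^2=a\cdot a$ is not visibly of the form $a(\,\cdot\,)a$, so these terms do not immediately fit the regularity pattern. I would resolve this by first left-multiplying the relation above by $a$; since $a^3=a\cdot a\cdot a\in aRa$ and $a\cdot(a\xi a)=a(a\xi)a\in aRa$, every term except the leading $a^2$ lands in $aRa$, which shows $a^2\in\overline{aRa}$, say $a^2+p+w'=q+w'$ with $p,q\in aRa$. Adding appropriate nonnegative integer multiples of this relation to the previous one then lets me trade away each $Na^2$ and $Ma^2$ in favour of elements of $aRa$, while only enlarging the common cancellable summand. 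After regrouping, using that $aRa$ is closed under addition and nonnegative integer multiples so that the $a(\,\cdot\,)a$-terms coalesce into a single $axa$ and a single $aya$, the relation collapses to $a+axa+z'=aya+z'$, which is precisely $h$-hemiregularity. I expect this absorption of the $a^2$-terms to be the main obstacle, with the remainder being bookkeeping.
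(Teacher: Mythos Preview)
The paper does not prove this lemma; it is quoted from \cite{27} (Zhan--Dudek) without argument, so there is no in-paper proof to compare against. What I can do is check your proposal on its own terms.

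Your forward direction is correct and standard: for $a\in A\cap B$ the regularity witnesses give $axa,aya\in AB$, hence $a\in\overline{AB}$.

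Your converse is also correct, and the handling of the identity-free case is the right idea. A couple of small points worth making explicit when you write it up:
\begin{itemize}
\item The identification $\langle a\rangle_r\langle a\rangle_l=\mathbb{Z}_{\ge 0}a^2+aRa$ relies on the fact that $aRa=\{ara:r\in R\}$ is already closed under addition (via $ar_1a+ar_2a=a(r_1+r_2)a$), so no further sums are needed.
\item In the absorption step, after multiplying $(\ast)\colon a+Na^2+a\xi a+z=Ma^2+a\eta a+z$ on the left by $a$ you get $a^2+p+w'=q+w'$ with $p=a(Na+a\xi)a$, $q=a(Ma+a\eta)a$, $w'=az$. Adding $Np+Nw'$ to both sides of $(\ast)$ kills $Na^2$ on the left (replacing it by $Nq\in aRa$), and then adding $Mp+Mw'$ to both sides kills $Ma^2$ on the right. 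All the $a(\cdot)a$ terms coalesce by distributivity, and the enlarged common summand is $z+(N+M)w'$. This is exactly your sketch; it goes through without further obstacles.
\end{itemize}
So the argument is sound. In the original source \cite{27} the converse is typically argued via the principal right and left $h$-ideals as you do; your explicit treatment of the $a^2$ terms in the absence of an identity is the genuine content and is handled correctly.
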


Let $X$ be a non-empty set. By a \textit{fuzzy subset} $\mu $ of $X$ we mean
a membership function $\mu :X\rightarrow \lbrack 0,1]$. Im$\mu $ denotes the
set of all values of $\mu $. A fuzzy subset $\mu :X\rightarrow \left[ 0,1%
\right] $ is non-empty if there exist at least one $x\in X$ such that $\mu
(x)>0$. For any fuzzy subsets $\lambda $ and $\mu $ of $X$ we define
\begin{eqnarray*}
&&\lambda \leq \mu \ \Longleftrightarrow \lambda \left( x\right) \leq \mu
\left( x\right) , \\
&&(\lambda \wedge \mu )(x)=\lambda (x)\wedge \mu (x)=\min \{\lambda (x),\mu
(x)\}, \\
&&\left( \lambda \vee \mu \right) \left( x\right) =\lambda \left( x\right)
\vee \mu \left( x\right) =\max \{\lambda (x),\mu (x)\}
\end{eqnarray*}%
for all $x\in X.$

More generally, if $\{\lambda _{i}:i\in I\}$ is a collection of fuzzy
subsets of $X,$ then by the \textit{intersection} and the \textit{union} of
this collection we mean fuzzy subsets
\begin{eqnarray*}
&&\Big( \bigwedge _{i\in I}\lambda_{i}\Big)(x) =\bigwedge_{i\in
I}\lambda_{i}(x)=\underset{i\in I}{\inf}\,\{\lambda_i(x)\}, \\
&&\Big(\bigvee_{i\in I}\lambda_{i}\Big)(x) =\bigvee_{i\in I}\lambda_{i}(x)=%
\underset{i\in I}{\sup}\,\{\lambda_i(x)\},
\end{eqnarray*}
respectively.

A fuzzy subset $\lambda $ of a semiring $R$ is called a \textit{fuzzy left $%
( $right$)$ ideal} of $R$ if for all $a,b\in R$ we have

\begin{enumerate}
\item[$(1)$] \ $\lambda \left( a+b\right)\geq\lambda(a)\wedge\lambda(b)$,

\item[$(2)$] \ $\lambda \left( ab\right) \geq \lambda(b), \
(\lambda(ab)\geq\lambda(a)).$
\end{enumerate}

Note that $\lambda(0)\geq\lambda(x)$ for all $x\in R.$

\begin{definition}
A fuzzy left (right) ideal $\lambda $ of a hemiring $R$ is called a \textit{%
fuzzy left $($right$)$ }

\textit{$\bullet$ \ $k$-ideal} if $x+y=z\longrightarrow \lambda \left(
x\right) \geq \lambda(y)\wedge\lambda(z)$,

$\bullet$ \ \textit{$h$-ideal} if $x+a+y=b+y\longrightarrow\lambda(x)\geq
\lambda(a)\wedge\lambda(b)$

\noindent holds for all $a,b,x,y\in R$.
\end{definition}

Properties of fuzzy sets defined on an algebraic system $\mathfrak{A}=(X,%
\mathbb{F})$, where $\mathbb{F}$ is a family of operations (also partial)
defined on $X$, can be characterized by the corresponding properties of some
subsets of $X$. Namely, as it is proved in \cite{KD} the following Transfer
Principle holds.

\begin{lemma}
\label{trans} A fuzzy set $\lambda$ defined on $\mathfrak{A}$ has the
property $\mathcal{P}$ if and only if all non-empty subsets $%
U(\lambda;t)=\{x\in X\,|\,\lambda(x)\ge t\}$ have the property $\mathcal{P}$.
\end{lemma}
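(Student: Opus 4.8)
The plan is to isolate the common syntactic shape of all the ``properties'' $\mathcal{P}$ that occur in the paper (the defining axioms of fuzzy left/right ideals, $k$-ideals and $h$-ideals) and then to verify the equivalence once and for all for every property of that shape. Each such axiom is an implication of the form: for all tuples $\bar{x}=(x_1,\dots,x_n)$ from $X$ satisfying a crisp relation $\Phi(\bar{x})$ (possibly vacuous, as in the additive-closure axiom, or an equation such as $x+a+y=b+y$), one has $\lambda(f(\bar{x}))\ge\bigwedge_{i}\lambda(g_i(\bar{x}))$, where $f$ and the $g_i$ are term functions built from the operations of $\mathfrak{A}$. The corresponding crisp property $\mathcal{P}$ of a subset $U\subseteq X$ then reads: whenever $\Phi(\bar{x})$ holds and $g_i(\bar{x})\in U$ for all $i$, then $f(\bar{x})\in U$. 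First I would record that each axiom of the Preliminaries genuinely has this form, so that settling the equivalence for an arbitrary property of this shape proves the lemma.

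For the direction ($\Rightarrow$), assume $\lambda$ satisfies the fuzzy property and fix any $t$ with $U(\lambda;t)\ne\emptyset$. To verify the crisp property of $U(\lambda;t)$, take any $\bar{x}$ with $\Phi(\bar{x})$ and $g_i(\bar{x})\in U(\lambda;t)$ for all $i$; then $\lambda(g_i(\bar{x}))\ge t$ for each $i$, whence $\bigwedge_i\lambda(g_i(\bar{x}))\ge t$. The fuzzy inequality now yields $\lambda(f(\bar{x}))\ge\bigwedge_i\lambda(g_i(\bar{x}))\ge t$, i.e. $f(\bar{x})\in U(\lambda;t)$, as required.

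For the converse ($\Leftarrow$), assume every non-empty $U(\lambda;t)$ satisfies the crisp property and fix a tuple $\bar{x}$ with $\Phi(\bar{x})$. Put $t=\bigwedge_i\lambda(g_i(\bar{x}))$. If $t=0$ the desired inequality $\lambda(f(\bar{x}))\ge t$ is automatic since $\lambda$ is $[0,1]$-valued. If $t>0$, then because the infimum is a lower bound for the family we have $\lambda(g_i(\bar{x}))\ge t$ for every $i$, so each $g_i(\bar{x})\in U(\lambda;t)$; in particular this level set is non-empty. Applying its crisp property to $\bar{x}$ gives $f(\bar{x})\in U(\lambda;t)$, that is $\lambda(f(\bar{x}))\ge t=\bigwedge_i\lambda(g_i(\bar{x}))$, which establishes the fuzzy property.

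I expect the only genuine subtlety—rather than the routine two-way threshold argument—to be the bookkeeping needed to guarantee that the principle is applied solely to properties of the admissible implicational form, together with the care required when the meet $\bigwedge_i\lambda(g_i(\bar{x}))$ is an infimum that need not be attained. The observation dissolving the latter point is that an infimum is always a lower bound of its family, so the membership of all the $g_i(\bar{x})$ in the cut at the threshold level $t$ is guaranteed without any attainment hypothesis; the degenerate case $t=0$ is then dispatched immediately by the codomain constraint $\lambda(X)\subseteq[0,1]$.
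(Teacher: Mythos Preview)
Your argument is correct and is exactly the standard level-set threshold argument underlying the Transfer Principle. Note, however, that the paper does not supply its own proof of this lemma: the statement is imported from \cite{KD} (Kondo--Dudek), with the paper merely remarking ``as it is proved in \cite{KD}''. So there is no in-paper proof to compare against; your proposal effectively reconstructs the cited result for the class of implicational properties actually used here (closure under $+$, absorption under multiplication, the $k$- and $h$-conditions), which is precisely the scope in which the paper invokes the principle.
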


For example, a fuzzy set $\lambda$ of a hemiring $R$ is a fuzzy left ideal
if and only if each non-empty subset $U(\lambda;t)$ is a left ideal of $R$.
Similarly, a fuzzy set $\lambda$ in a hemiring $R$ is a fuzzy left $h$-ideal
of $R$ if and only if each non-empty subset $U(\lambda;t)$ is a left $h$%
-ideal of $R$.

As a simple consequence of the above property, we obtain the following
proposition, which was first proved in \cite{16}.

\begin{proposition}
\label{P2.8} Let $A$ be a non-empty subset of a hemiring $R$. Then a fuzzy
set $\lambda_A$ defined by
\begin{equation*}
\lambda_A(x)=\left\{%
\begin{array}{ll}
t & \mathit{if } \ x\in A \\[2pt]
s & \mathit{otherwise}%
\end{array}%
\right.
\end{equation*}
where $0\le s<t\le 1$, is a fuzzy left $h$-ideal of $R$ if and only if $A$
is a left $h$-ideal of $R$.
\end{proposition}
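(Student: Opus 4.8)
The plan is to prove this as a direct corollary of the Transfer Principle (Lemma~\ref{trans}) together with the characterization recorded just above it: a fuzzy set $\lambda$ of $R$ is a fuzzy left $h$-ideal if and only if each non-empty level set $U(\lambda;r)=\{x\in R\mid\lambda(x)\ge r\}$ is a left $h$-ideal of $R$. Thus the whole argument reduces to identifying the non-empty level sets of $\lambda_A$ and checking which of them are $h$-ideals, rather than verifying the defining inequalities $(1)$, $(2)$ and the $h$-condition for $\lambda_A$ by hand.

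First I would determine the level sets of $\lambda_A$. Since $\lambda_A$ attains only the two values $t$ (on $A$) and $s$ (on $R\setminus A$), and $0\le s<t\le 1$, the set $U(\lambda_A;r)$ is governed entirely by how $r$ compares with $s$ and $t$:
\begin{equation*}
U(\lambda_A;r)=
\begin{cases}
R & \text{if } r\le s,\\
A & \text{if } s<r\le t,\\
\emptyset & \text{if } r>t.
\end{cases}
\end{equation*}
Hence the only non-empty level sets are $R$ and $A$. I would check the boundary cases $r=s$ and $r=t$ explicitly, since the strict inequality $s<r\le t$ is precisely what isolates $A$; this is where the standing hypothesis $s<t$ enters.

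Next I would observe that $R$ is always a left $h$-ideal of itself: it is closed under addition and under left multiplication by $R$, and the implication $x+a+y=b+y\Rightarrow x\in R$ holds trivially. Consequently, among the non-empty level sets of $\lambda_A$ the set $R$ is automatically a left $h$-ideal, so whether $\lambda_A$ lies in the class of fuzzy left $h$-ideals depends solely on $A$. Applying the Transfer Principle then yields the claim at once: $\lambda_A$ is a fuzzy left $h$-ideal of $R$ if and only if every non-empty $U(\lambda_A;r)$ is a left $h$-ideal, which, since $R$ always is, holds if and only if $A$ is a left $h$-ideal of $R$. I do not anticipate any genuine obstacle here; the only point requiring care is the boundary analysis in the level-set computation, and the argument is essentially a bookkeeping exercise once the Transfer Principle is invoked.
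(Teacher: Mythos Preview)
Your proposal is correct and follows exactly the approach the paper indicates: the paper does not give a detailed proof but states that Proposition~\ref{P2.8} is ``a simple consequence of the above property,'' namely the Transfer Principle (Lemma~\ref{trans}) and its specialization to fuzzy left $h$-ideals. Your level-set computation and the observation that $R$ is trivially a left $h$-ideal are precisely the bookkeeping needed to flesh out that one-line remark.
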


\begin{proposition}
\label{P2.9} If \ \textrm{Im}$\lambda_A=\mathrm{Im}\lambda_B$ then

\medskip $(1)$ \ $A\subseteq B\longleftrightarrow\lambda_{A}\leq\lambda_{B}$,

\medskip $(2)$ \ $\lambda_{A}\wedge\lambda_{B}=\lambda_{A\cap B}$.
\end{proposition}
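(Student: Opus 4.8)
The plan is to first unpack the hypothesis $\mathrm{Im}\,\lambda_A=\mathrm{Im}\,\lambda_B$ into a usable form. Since $A$ is a non-empty subset of $R$, the function $\lambda_A$ attains the value $t$ on $A$ and the value $s$ elsewhere, so $\mathrm{Im}\,\lambda_A$ is either $\{t\}$ (exactly when $A=R$) or $\{s,t\}$ with $0\le s<t\le 1$; the same holds for $\lambda_B$. Comparing the two images and using $s<t$ in each case, I would conclude that $\lambda_A$ and $\lambda_B$ take their values in one and the same two-element set $\{s,t\}$. In particular, for both fuzzy sets the value $t$ is attained precisely on the defining subset and $s$ off it, so that $\lambda_A(x)=t\Leftrightarrow x\in A$ and $\lambda_A(x)=s\Leftrightarrow x\notin A$ (and likewise for $B$). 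This normalization is the only place the hypothesis is used, and it is what makes the remaining arguments go through; without it the two fuzzy sets could use unrelated pairs of values and both conclusions would fail.

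For part $(1)$ I would argue both implications by a pointwise case analysis. For the forward direction, assuming $A\subseteq B$, I check $\lambda_A(x)\le\lambda_B(x)$ separately for $x\in A$ (where $\lambda_A(x)=t=\lambda_B(x)$ since then $x\in B$) and for $x\notin A$ (where $\lambda_A(x)=s$ is the minimum value in the common image, hence $\le\lambda_B(x)$). For the converse, assuming $\lambda_A\le\lambda_B$, I take any $x\in A$, so $\lambda_B(x)\ge\lambda_A(x)=t$; since $t$ is the largest value in the common image this forces $\lambda_B(x)=t$, whence $x\in B$. Thus $A\subseteq B$.

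For part $(2)$ I would compute $(\lambda_A\wedge\lambda_B)(x)=\min\{\lambda_A(x),\lambda_B(x)\}$ by splitting into the four cases according to the membership of $x$ in $A$ and in $B$. When $x\in A\cap B$ both values equal $t$, so the minimum is $t=\lambda_{A\cap B}(x)$; in each of the remaining three cases at least one of $\lambda_A(x),\lambda_B(x)$ equals $s$, so the minimum is $s$, which agrees with $\lambda_{A\cap B}(x)$ because then $x\notin A\cap B$. Hence the two fuzzy sets coincide at every point.

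I do not expect a genuine obstacle here: the whole content is the bookkeeping of the two values $s$ and $t$, and the only subtle point is the degenerate situation where $A=R$ (and hence, by the equality of images, $B=R$), in which case both images collapse to the singleton $\{t\}$; this case must be noted so that the symbol $s$ is interpreted consistently, but the case analyses above then hold verbatim.
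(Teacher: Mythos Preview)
Your proposal is correct and follows essentially the same pointwise case analysis as the paper's proof; the paper is simply terser and leaves implicit the unpacking of the hypothesis $\mathrm{Im}\,\lambda_A=\mathrm{Im}\,\lambda_B$ and the degenerate case $A=R$, both of which you spell out carefully.
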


\begin{proof}
Let $A\subseteq B$. For $x\in A$ we have $\lambda_A(x)=t=\lambda_B(x)$. If $%
x\not\in A$, then $\lambda_A(x)=s\leq\lambda_B(x)$. So, $\lambda_A\leq%
\lambda_B$. Conversely, if $\lambda_A\leq\lambda_B$, then for all $x\in A$
we obtain $t=\lambda_A(x)\leq\lambda_B(x)$. Thus $\lambda_B(x)=t$, i.e., $%
x\in B$. Consequently, $A\subseteq B$. This proves $(1)$.

To prove $(2)$ let $x\in A\cap B$. Then $x\in A$, $x\in B$ and $%
\lambda_A(x)\wedge\lambda_B(x)=t=\lambda_{A\cap B}$. If $x\not\in A\cap B$,
then $\lambda_A(x)=s$ or $\lambda_B(x)=s$. So, $\lambda_A(x)\wedge%
\lambda_B(x)=s=\lambda_{A\cap B}(x)$, which completes the proof.
\end{proof}

\begin{definition}
\cite{16} Let $\lambda $ and $\mu $ be fuzzy subsets of a hemiring $R$. Then
the $h$-product of $\lambda $ and $\mu $ is defined by
\end{definition}

\begin{center}
\begin{tabular}{l}
$\left( \lambda \circ _{h}\mu \right) \left( x\right) =\left\{
\begin{tabular}{l}
$\underset{_{x+a_{1}b_{1}+y=a_{2}b_{2}+y}}{\sup}\!\!\!\!\!\!\!\!\!\big(
\lambda\left(a_{1}\right)\wedge\lambda\left(a_{2}\right) \wedge\mu\left(
b_{1}\right)\wedge\mu\left(b_{2}\right)\big)$ \\
$0 \ \ \ \ \text{ \ if }x\text{ is not expressed as }%
x+a_{1}b_{1}+y=a_{2}b_{2}+y$.%
\end{tabular}
\right. $%
\end{tabular}
\end{center}

\medskip

One can prove that if $\lambda$ and $\mu$ are fuzzy left (right) $h$-ideals
in a hemiring $R$, then so is $\lambda\wedge\mu$. Moreover, if $\lambda $ is
a fuzzy right $h$-ideal and $\mu $ is a fuzzy left $h$-ideal of $R.$ then $%
\lambda\circ_{h}\mu\leq\lambda \wedge\mu.$

\begin{theorem}
\label{T2.11}\cite{27} A hemiring $R$ is $h$-hemiregular if and only if \ $%
\lambda\circ_{h}\mu=\lambda\wedge\mu$ for any fuzzy right $h$-ideal $\lambda$
and fuzzy left $h$-ideal $\mu$.
\end{theorem}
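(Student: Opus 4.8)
The plan is to prove both implications, reducing the ``only if'' direction to the crisp characterization in Lemma~\ref{L2.5} via characteristic functions, and establishing the ``if'' direction by a direct computation with the $h$-product.

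Assume first that $R$ is $h$-hemiregular, and let $\lambda$ be a fuzzy right $h$-ideal and $\mu$ a fuzzy left $h$-ideal. I would begin from the inequality $\lambda\circ_h\mu\le\lambda\wedge\mu$ recorded just before the theorem, so that only $\lambda\circ_h\mu\ge\lambda\wedge\mu$ remains. Fix $x\in R$. By $h$-hemiregularity there are $u,v,w\in R$ with $x+xux+w=xvx+w$, which I would regroup as $x+(xu)x+w=(xv)x+w$; this is precisely a representation of the shape $x+a_1b_1+y=a_2b_2+y$ with $a_1=xu$, $a_2=xv$, $b_1=b_2=x$ and $y=w$. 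Hence the supremum defining $(\lambda\circ_h\mu)(x)$ dominates $\lambda(xu)\wedge\lambda(xv)\wedge\mu(x)\wedge\mu(x)$. Since $\lambda$ is a fuzzy right ideal, $\lambda(xu)\ge\lambda(x)$ and $\lambda(xv)\ge\lambda(x)$, so this term is at least $\lambda(x)\wedge\mu(x)=(\lambda\wedge\mu)(x)$, giving the reverse inequality and therefore equality.

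For the converse I would pass to characteristic functions. Let $A$ be a right $h$-ideal and $B$ a left $h$-ideal, and let $\chi_A,\chi_B$ be the instances of $\lambda_A,\lambda_B$ with $t=1$, $s=0$. By Proposition~\ref{P2.8} (and its right-handed analogue), $\chi_A$ is a fuzzy right $h$-ideal and $\chi_B$ is a fuzzy left $h$-ideal, both with image in $\{0,1\}$. Applying the hypothesis together with Proposition~\ref{P2.9}(2) yields $\chi_A\circ_h\chi_B=\chi_A\wedge\chi_B=\chi_{A\cap B}$. Now take any $x\in A\cap B$; then $(\chi_A\circ_h\chi_B)(x)=1$, and since each factor in the defining supremum lies in $\{0,1\}$, the value $1$ must actually be attained by some representation $x+a_1b_1+y=a_2b_2+y$ with $a_1,a_2\in A$ and $b_1,b_2\in B$. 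As $a_1b_1,a_2b_2\in AB$, this exhibits $x\in\overline{AB}$, so $A\cap B\subseteq\overline{AB}$; combined with Lemma~\ref{L2.3} this gives $\overline{AB}=A\cap B$, whence $R$ is $h$-hemiregular by Lemma~\ref{L2.5}.

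The computational direction is routine once the hemiregularity identity is grouped as $(xu)x$ and $(xv)x$, so I expect the only genuine subtlety to lie in the converse. The delicate point is the passage from the numerical equality $(\chi_A\circ_h\chi_B)(x)=1$ back to an honest single-product witness: because $AB$ is a set of finite sums one might fear the supremum is approached only through sums, but the fact that $\chi_A,\chi_B$ are $\{0,1\}$-valued forces the supremum to be realized by a single representation, which is exactly what places $x$ inside $\overline{AB}$. I would also take care to invoke Proposition~\ref{P2.8} in its right-ideal form and to check that the images of $\chi_A$ and $\chi_B$ coincide (or, equivalently, use the elementary identity $\chi_A\wedge\chi_B=\chi_{A\cap B}$) so that Proposition~\ref{P2.9}(2) legitimately applies.
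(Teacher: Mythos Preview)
The paper does not supply its own proof of this theorem: it is quoted from \cite{27} and used as background. So there is nothing to compare against line by line, and the relevant question is whether your argument stands on its own. It does. The forward direction is exactly the intended computation: from $x+xux+w=xvx+w$ one reads off a representation $x+(xu)\,x+w=(xv)\,x+w$ feeding the definition of $\lambda\circ_h\mu$, and the right-ideal inequality $\lambda(xu),\lambda(xv)\ge\lambda(x)$ finishes it. For the converse, your reduction via characteristic functions is the standard device; the one point worth stressing (and you do) is that because $\chi_A,\chi_B$ take values in $\{0,1\}$, a supremum equal to $1$ is actually attained, producing genuine witnesses $a_1,a_2\in A$, $b_1,b_2\in B$ with $x+a_1b_1+y=a_2b_2+y$, hence $x\in\overline{AB}$. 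Together with Lemma~\ref{L2.3} and Lemma~\ref{L2.5} this closes the loop.

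Two cosmetic remarks. First, your aside about ``$AB$ being a set of finite sums'' is slightly off-target: the $h$-product $\circ_h$ is defined through \emph{single} products $a_1b_1$, $a_2b_2$, so no passage through sums is needed here at all; the point is purely that a $\{0,1\}$-valued supremum is attained. Second, you rightly flag that Proposition~\ref{P2.9}(2) has an image hypothesis; simply bypass it and use the trivial identity $\chi_A\wedge\chi_B=\chi_{A\cap B}$ for characteristic functions, which holds unconditionally.
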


%%%%%%%%%%%%%%%%%%%%%%%%%%%%%%%%%%%%%%%%%%%%%%%%%%%%%%%%%%%%%%%%%%

\section{$h$-intrinsic product of fuzzy subsets}

%%%%%%%%%%%%%%%%%%%%%%%%%%%%%%%%%%%%%%%%%%%%%%%%%%%%%%%%%%%%%%%%%%
To avoid repetitions from now $R$ will always mean a hemiring $(R,+,\cdot)$.

Generalizing the concept of $h$-product of two fuzzy subsets of $R$, in \cite%
{24} the following $h$-intrinsic product of fuzzy subsets is defined:

\begin{definition}
The \textit{$h$-intrinsic product} of two fuzzy subsets $\mu$ and $\nu$ on $%
R $ is defined by
\begin{equation*}
(\mu\odot_{h}\nu)(x)= \underset{x+\sum\limits_{i=1}^{m}\!\!a_{i}b_{i}+z=
\sum\limits_{j=1}^{n}\!\!a_{j}^{^{\prime }}b_{j}^{^{\prime }}+z}{\sup}\Big( %
\bigwedge\limits_{i=1}^m\big(\mu(a_{i})\wedge\nu(b_{i})\big)\wedge
\bigwedge\limits_{j=1}^n\big(\mu(a_{j}^{^{\prime
}})\wedge\nu(b_{j}^{^{\prime }}\big)\Big)
\end{equation*}
and $(\mu \odot _{h}\nu )(x)=0$ \ if $\,x\,$ cannot be expressed as \ $%
x+\sum\limits_{i=1}^{m}\!\!a_{i}b_{i}+z=\sum\limits_{j=1}^{n}\!\!a_{j}^{^{%
\prime }}b_{j}^{^{\prime }}+z$.
\end{definition}

The following properties of the $h$-intrinsic product of fuzzy sets proved
in \cite{24} will be used in this paper.

\begin{proposition}
\label{P3.2} Let $\mu$, $\nu$, $\omega$, $\lambda$ be fuzzy subsets on $R$.
Then

\medskip $(1)$ \ $\mu\circ_{h}\nu\leq\mu\odot_{h}\nu$,

\medskip $(2)$ \ $\mu\leq\omega$ and $\nu\leq\lambda \longrightarrow
\mu\odot_{h}\nu\leq\omega\odot_{h}\lambda$.

\medskip $(3)$ \ $\chi_{A}\odot_{h}\chi_{B}=\chi\overline{_{AB}}$ \ for
characteristic functions of any subsets of $R$.
\end{proposition}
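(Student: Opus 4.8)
The plan is to separate the three parts, treating $(1)$ and $(2)$ as direct consequences of the defining formula and reserving the real work for $(3)$. For $(1)$, I would note that the supremum defining $(\mu\circ_h\nu)(x)$ ranges only over the single-summand representations $x+a_1b_1+y=a_2b_2+y$, while the supremum defining $(\mu\odot_h\nu)(x)$ ranges over all representations $x+\sum_{i=1}^m a_ib_i+z=\sum_{j=1}^n a_j'b_j'+z$ with arbitrary $m,n$. Specializing to $m=n=1$ and matching $a_2=a_1'$, $b_2=b_1'$, $y=z$ shows that every term contributing to $(\mu\circ_h\nu)(x)$ already occurs among the terms contributing to $(\mu\odot_h\nu)(x)$; since the latter is a supremum over a larger family, $\mu\circ_h\nu\leq\mu\odot_h\nu$ follows at once (and both sides are $0$ when $x$ is not representable). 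For $(2)$ I would fix $x$ and a single representation; from $\mu\leq\omega$ and $\nu\leq\lambda$ each factor obeys $\mu(a_i)\leq\omega(a_i)$, $\nu(b_i)\leq\lambda(b_i)$, and likewise for the primed terms, so the inner meet for $(\mu,\nu)$ is bounded above by the inner meet for $(\omega,\lambda)$. As this holds representation-by-representation over the same index family, passing to the supremum gives $(\mu\odot_h\nu)(x)\leq(\omega\odot_h\lambda)(x)$.

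The substance is $(3)$, and the key is to determine exactly when the supremum defining $(\chi_A\odot_h\chi_B)(x)$ equals $1$ rather than $0$. Since $\chi_A$ and $\chi_B$ take only the values $0$ and $1$, for any fixed representation $x+\sum_{i=1}^m a_ib_i+z=\sum_{j=1}^n a_j'b_j'+z$ the inner meet equals $1$ precisely when every $a_i,a_j'$ lies in $A$ and every $b_i,b_j'$ lies in $B$, and equals $0$ otherwise. Hence $(\chi_A\odot_h\chi_B)(x)=1$ if and only if $x$ admits at least one representation in which all factors lie in the respective sets; in every other case the supremum is either a supremum of zeros or $x$ is not representable, so the value is $0$.

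It remains to match this dichotomy with membership in $\overline{AB}$. Recalling that $AB$ is the set of all finite sums $\sum a_ib_i$ with $a_i\in A$, $b_i\in B$, a fully admissible representation with all factors in $A$ and $B$ is exactly a witness $x+p+z=q+z$ with $p=\sum a_ib_i\in AB$ and $q=\sum a_j'b_j'\in AB$, which by the definition of $h$-closure means $x\in\overline{AB}$; conversely any $x\in\overline{AB}$ supplies such a representation, whence all characteristic-function values are $1$. Therefore $(\chi_A\odot_h\chi_B)(x)=1\iff x\in\overline{AB}$, and otherwise both sides vanish, giving $\chi_A\odot_h\chi_B=\chi_{\overline{AB}}$.

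The expected obstacle lies entirely in the bookkeeping of $(3)$: one must check both that a single fully admissible representation already forces the supremum up to $1$ and that the absence of any such representation forces it down to $0$, and then align the set-product $AB$ (finite sums of products, not merely single products $ab$) with the exponents $m,n$ in the definition of $\odot_h$, so that the closure condition $x+p+z=q+z$ is reproduced precisely. Parts $(1)$ and $(2)$ carry no genuine difficulty beyond observing that enlarging the index family, respectively replacing the operands by larger ones, can only increase the relevant supremum.
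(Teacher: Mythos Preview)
The paper does not actually prove Proposition~\ref{P3.2}; it merely records the statement and cites it from \cite{24}, so there is no in-paper argument to compare against. Your proposal is correct and is the standard way to derive these facts directly from the definitions. Parts $(1)$ and $(2)$ are exactly the monotonicity observations you describe. For $(3)$, your identification of the condition $(\chi_A\odot_h\chi_B)(x)=1$ with the existence of $p,q\in AB$ and $z\in R$ satisfying $x+p+z=q+z$, i.e.\ with $x\in\overline{AB}$, is precisely the intended argument; your explicit remark that $AB$ must be read as the set of finite sums $\sum a_ib_i$ (not merely the set of single products $ab$) is the one point where care is required, and you have handled it correctly.
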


\begin{theorem}
\label{T3.3} If $\lambda$ and $\mu$ are fuzzy $h$-ideals of $R$, then so is $%
\lambda\odot_{h}\mu$. Moreover, $\lambda\odot_{h}\mu\leq\lambda\wedge\mu$.
\end{theorem}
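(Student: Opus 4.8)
The plan is to verify that $\lambda\odot_h\mu$ satisfies all three defining conditions of a fuzzy $h$-ideal — closure under addition, the left/right ideal inequalities, and the $h$-condition — and then to establish the domination $\lambda\odot_h\mu\le\lambda\wedge\mu$ separately. Throughout, whenever an element cannot be written in the form $x+\sum a_ib_i+z=\sum a'_jb'_j+z$ the product takes value $0$, so every inequality to be checked is trivial at such points; I therefore treat only expressible elements, and in each case I manipulate the defining equalities of the chosen representations.

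First I would dispose of the additive and multiplicative conditions, the routine parts. For $(\lambda\odot_h\mu)(a+b)\ge(\lambda\odot_h\mu)(a)\wedge(\lambda\odot_h\mu)(b)$, take any representations $a+\sum p_iq_i+z_1=\sum p'_jq'_j+z_1$ and $b+\sum r_ks_k+z_2=\sum r'_ls'_l+z_2$; adding the two equalities produces a single representation of $a+b$ with $z=z_1+z_2$ whose associated meet is the minimum of the two original meets, and passing to the supremum yields the claim. For the ideal inequalities I would multiply a representation of $a$ on the left (resp. right) by an arbitrary $c\in R$: left multiplication replaces each first factor $p_i$ by $cp_i$, and since $\lambda$ is a fuzzy left ideal $\lambda(cp_i)\ge\lambda(p_i)$, so the meet does not decrease and $(\lambda\odot_h\mu)(ca)\ge(\lambda\odot_h\mu)(a)$; right multiplication replaces each second factor $q_i$ by $q_ic$ and uses that $\mu$ is a fuzzy right ideal. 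This is exactly where I need both $\lambda$ and $\mu$ to be two-sided.

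The heart of the argument — and the step I expect to be the main obstacle — is the $h$-condition. Given $x+a+y=b+y$ together with representations $a+\sum p_iq_i+z_1=\sum p'_jq'_j+z_1$ and $b+\sum r_ks_k+z_2=\sum r'_ls'_l+z_2$, I would abbreviate $P=\sum p_iq_i$, $P'=\sum p'_jq'_j$, $S=\sum r_ks_k$, $S'=\sum r'_ls'_l$ and combine the three equalities in two stages. Substituting the $a$-representation into $x+a+y=b+y$ first gives $x+P'+z_1+y=b+P+z_1+y$; then adding $S+z_2$ to both sides and using the $b$-representation to replace $b+S+z_2$ by $S'+z_2$ yields
\begin{equation*}
x+(P'+S)+(z_1+z_2+y)=(P+S')+(z_1+z_2+y).
\end{equation*}
This is a legitimate representation of $x$ whose meet is precisely the minimum of the meets coming from the two chosen representations, so taking the supremum over all representations of $a$ and of $b$ gives $(\lambda\odot_h\mu)(x)\ge(\lambda\odot_h\mu)(a)\wedge(\lambda\odot_h\mu)(b)$. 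Engineering this cancellation of $b$ while keeping every surviving factor among the original $p_i,q_i,p'_j,q'_j,r_k,s_k,r'_l,s'_l$ is the delicate bookkeeping of the proof.

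Finally, for $\lambda\odot_h\mu\le\lambda\wedge\mu$, fix a representation $x+\sum p_iq_i+z=\sum p'_jq'_j+z$ and let $t$ be its meet. Since $\lambda$ is a fuzzy right ideal, $\lambda(p_iq_i)\ge\lambda(p_i)\ge t$ and $\lambda(p'_jq'_j)\ge\lambda(p'_j)\ge t$, whence the additive property gives $\lambda(\sum p_iq_i)\ge t$ and $\lambda(\sum p'_jq'_j)\ge t$; applying the $h$-condition for $\lambda$ to $x+\sum p_iq_i+z=\sum p'_jq'_j+z$ then forces $\lambda(x)\ge t$. Symmetrically, using that $\mu$ is a fuzzy left ideal (so $\mu(p_iq_i)\ge\mu(q_i)$) together with the $h$-condition for $\mu$, I get $\mu(x)\ge t$. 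As $t$ was the value of an arbitrary representation, the supremum satisfies $(\lambda\odot_h\mu)(x)\le\lambda(x)\wedge\mu(x)$, which is the desired inequality.
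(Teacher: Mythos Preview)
Your proof is correct and follows essentially the same route as the paper: the additive and multiplicative conditions are obtained by combining or scaling representations, and for the $h$-condition you derive exactly the paper's key identity $x+(P'+S)+(z_1+z_2+y)=(P+S')+(z_1+z_2+y)$ from $x+a+y=b+y$ and the two chosen representations. In fact you supply more detail than the paper does for the inequality $\lambda\odot_h\mu\le\lambda\wedge\mu$, which the paper dismisses as ``simple calculations''; your argument via $\lambda(p_iq_i)\ge\lambda(p_i)$, additivity, and the $h$-condition for $\lambda$ (and symmetrically for $\mu$) is the natural way to fill that gap.
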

\begin{proof}
Let $\lambda $ and $\mu $ be fuzzy $h$-ideals of $R$. Let $x,y\in R$, then
\begin{equation*}
(\lambda\odot_{h}\mu)(x)= \underset{x+\sum\limits_{i=1}^{m}\!\!a_{i}b_{i}+z=
\sum\limits_{j=1}^{n}\!\!a_{j}^{^{\prime }}b_{j}^{^{\prime }}+z}{\sup}\Big( %
\bigwedge\limits_{i=1}^m\big(\lambda(a_{i})\wedge\mu(b_{i})\big)\wedge
\bigwedge\limits_{j=1}^n\big(\lambda(a_{j}^{^{\prime
}})\wedge\mu(b_{j}^{^{\prime }}\big)\Big)
\end{equation*}
and
\begin{equation*}
(\lambda\odot_{h}\mu)(y)= \underset{y+\sum\limits_{k=1}^{p}\!%
\!c_{k}d_{k}+z^{\prime }= \sum\limits_{l=1}^{q}\!\!c_{l}^{^{\prime
}}d_{l}^{^{\,\prime }}+z^{\prime }}{\sup}\Big( \bigwedge\limits_{k=1}^p\big(%
\lambda(c_{k})\wedge\mu(d_{k})\big)\wedge \bigwedge\limits_{l=1}^q\big(%
\lambda(c_{l}^{^{\prime }})\wedge\mu(d_{l}^{^{\,\prime }}\big)\Big)
\end{equation*}
Thus

\begin{equation*}
(\lambda\odot_{h}\mu)(x+y)= \underset{x+y+\sum\limits_{s=1}^{u}\!%
\!e_{s}f_{s}+z=\sum\limits_{t=1}^{v}\!\!e_{t}^{^{\prime}} f_{t}^{^{\prime
}}+z}{\sup}\Big(\bigwedge\limits_{s=1}^u\big(\lambda(e_{s})\wedge\mu(f_s)%
\big)\wedge \bigwedge\limits_{t=1}^v\big(\lambda (e_{t}^{^{\prime }})\wedge
\mu(f_{t}^{^{\prime}})\big)\Big)\geq
\end{equation*}
\begin{equation*}
\underset{x+\sum\limits_{i=1}^{m}\!\!a_{i}b_{i}+z=\sum\limits_{j=1}^{n}\!%
\!a_{j}^{^{ \prime }}b_{j}^{^{\prime }}+z}{\sup}\!\left( \underset{%
y+\sum\limits_{k=1}^{p}\!\!c_{k}d_{k}+z^{^{\prime
}}=\sum\limits_{l=1}^{q}\!\!c_{l}^{^{\prime }}d_{l}^{^{\,\prime
}}+z^{^{\prime}}}{\sup}\!\! \left(\!\!%
\begin{array}{l}
\!\!\bigwedge\limits_{i=1}^m\!\!\Big(\!\lambda(a_{i})\!\wedge\!\mu(b_{i})\!%
\Big)\!\!\wedge \!\bigwedge\limits_{j=1}^n\!\!\Big(\!\lambda
(a_{j}^{^{\prime }})\!\wedge\mu(b_{j}^{^{\prime}})\!\Big)\wedge \\
\bigwedge\limits_{k=1}^p\!\!\Big(\!\lambda(c_{k})\!\wedge\mu (d_{k})\!\Big)%
\!\!\wedge\!\!\bigwedge\limits_{l=1}^q\!\!\Big(\!\lambda(c_{l}^{^{\prime
}})\!\wedge\mu(d_{l}^{^{\,\prime }})\!\Big)%
\end{array}%
\!\!\!\!\right)\!\!\!\right)
\end{equation*}
\begin{equation*}
=\underset{x+\sum\limits_{i=1}^{m}\!\!a_{i}b_{i}+z=
\sum\limits_{j=1}^{n}\!\!a_{j}^{^{\prime }}b_{j}^{^{\prime }}+z}{\sup}\Big( %
\bigwedge\limits_{i=1}^m\big(\lambda(a_{i})\wedge\mu(b_{i})\big)\wedge
\bigwedge\limits_{j=1}^n\big(\lambda(a_{j}^{^{\prime
}})\wedge\mu(b_{j}^{^{\prime }}\big)\Big)\wedge\rule{60mm}{0mm}%
\end{equation*}
\begin{equation*}
\rule{30mm}{0mm}\underset{y+\sum\limits_{k=1}^{p}\!\!c_{k}d_{k}+z^{\prime }=
\sum\limits_{l=1}^{q}\!\!c_{l}^{^{\prime }}d_{l}^{^{\,\prime }}+z^{\prime }}{%
\sup}\Big( \bigwedge\limits_{k=1}^p\big(\lambda(c_{k})\wedge\mu(d_{k})\big)%
\wedge \bigwedge\limits_{l=1}^q\big(\lambda(c_{l}^{^{\prime
}})\wedge\mu(d_{l}^{^{\,\prime }}\big)\Big)
\end{equation*}
\begin{equation*}
=(\lambda \odot _{h}\mu )(x)\wedge (\lambda \odot _{h}\mu )(y).\rule%
{80mm}{0mm}%
\end{equation*}
Similarly,
\begin{equation*}
(\lambda \odot _{h}\mu )(xr)=\underset{xr+\sum\limits_{k=1}^{p}\!%
\!g_{k}h_{k}+z= \sum\limits_{l=1}^{q}\!\!g_{l}^{^{\prime }}h_{l}^{^{\prime
}}+z}{\sup}\Big( \bigwedge\limits_{k=1}^p\big(\lambda(g_{k})\wedge\mu(h_{k})%
\big)\wedge \bigwedge\limits_{l=1}^q\big(\lambda(g_{l}^{^{\prime
}})\wedge\mu(h_{l}^{^{\prime }}\big)\Big)\geq
\end{equation*}
\begin{equation*}
\underset{x+\sum\limits_{i=1}^{m}\!\!a_{i}b_{i}+z=
\sum\limits_{j=1}^{n}\!\!a_{j}^{^{\prime }}b_{j}^{^{\prime }}+z}{\sup}\Big( %
\bigwedge\limits_{i=1}^m\big(\lambda(a_{i})\wedge\mu(b_{i}r)\big)\wedge
\bigwedge\limits_{j=1}^n\big(\lambda(a_{j}^{^{\prime
}})\wedge\mu(b_{j}^{^{\prime }}r\big)\Big)\geq\rule{30mm}{0mm}%
\end{equation*}
\begin{equation*}
\underset{x+\sum\limits_{i=1}^{m}\!\!a_{i}b_{i}+z=
\sum\limits_{j=1}^{n}\!\!a_{j}^{^{\prime }}b_{j}^{^{\prime }}+z}{\sup}\Big( %
\bigwedge\limits_{i=1}^m\big(\lambda(a_{i})\wedge\mu(b_{i})\big)\wedge
\bigwedge\limits_{j=1}^n\big(\lambda(a_{j}^{^{\prime
}})\wedge\mu(b_{j}^{^{\prime }}\big)\Big)=(\lambda\odot_{h}\mu)(x).\rule%
{30mm}{0mm}%
\end{equation*}
Analogously we can verify that $(\lambda\odot_{h}\mu)(rx)\geq
(\lambda\odot_{h}\mu)(x)$ for all $r\in R$. This means that $%
\lambda\odot_{h}\mu$ is a fuzzy ideal of $R.$

To prove that $x+a+y=b+y$ implies $(\lambda\odot_{h}\mu)(x)\geq(\lambda%
\odot_{h}\mu)(a)\wedge (\lambda\odot_{h}\mu)(b)$ observe that
\begin{equation}  \label{e1}
a+\sum\limits_{i=1}^{m}a_{i}b_{i}+z_{1}=\sum\limits_{j=1}^{n}a_{j}^{^{\prime
}}b_{j}^{^{\prime }}+z_{1} \ \ \mathrm{and } \ \
b+\sum\limits_{k=1}^{l}c_{k}d_{k}+z_{2}=\sum\limits_{q=1}^{p}c_{q}^{^{\prime
}}d_{q}^{^{\,\prime }}+z_{2},
\end{equation}
together with $x+a+y=b+y$, gives $x+a+(\sum%
\limits_{i=1}^{m}a_{i}b_{i}+z_{1})+y=
b+(\sum\limits_{i=1}^{m}a_{i}b_{i}+z_{1})+y$. Thus $x+\sum%
\limits_{j=1}^{n}a_{j}^{^{\prime }}b_{j}^{^{\prime
}}+z_{1}+y=b+\sum\limits_{i=1}^{m}a_{i}b_{i}+z_{1}+y$ and, consequently, $%
x+\sum\limits_{j=1}^{n}a_{j}^{^{\prime }}b_{j}^{^{\prime
}}+(\sum\limits_{k=1}^{l}c_{k}d_{k}+z_{2})+z_{1}+y=
b+(\sum\limits_{k=1}^{l}c_{k}d_{k}+z_{2})+
\sum\limits_{i=1}^{m}a_{i}b_{i}+z_{1}+y=\sum\limits_{q=1}^{p}c_{q}^{^{\prime
}}d_{q}^{^{\,\prime}}+z_2+\sum\limits_{i=1}^{m}a_{i}b_{i}+z_1+y=
\sum\limits_{i=1}^{m}a_{i}b_{i}+\sum\limits_{q=1}^{p}c_{q}^{^{\prime
}}d_{q}^{^{\,\prime}}+z_2+z_1+y$.\ Therefore
\begin{equation}  \label{e2}
x+\sum\limits_{j=1}^{n}a_{j}^{^{\prime }}b_{j}^{^{\prime
}}+\sum\limits_{k=1}^{l}c_{k}d_{k}+z_{2}+z_{1}+y=
\sum\limits_{i=1}^{m}a_{i}b_{i}+\sum\limits_{q=1}^{p}c_{q}^{^{\prime
}}d_{q}^{^{\,\prime}}+z_2+z_1+y .
\end{equation}

Now, in view of \eqref{e1} and \eqref{e2}, we have
\begin{equation*}
\begin{array}{l}
(\lambda \odot_{h}\mu )(a)\wedge (\lambda \odot _{h}\mu )(b)= \\[3pt]
\underset{a+\sum\limits_{i=1}^{m}\!\!a_{i}b_{i}+z=\sum\limits_{j=1}^{n}\!%
\!a_{j}^{^{\prime }}b_{j}^{^{\prime }}+z}{\sup }\Big(\bigwedge%
\limits_{i=1}^{m}\big(\lambda (a_{i})\wedge \mu (b_{i})\big)\wedge
\bigwedge\limits_{j=1}^{n}\big(\lambda (a_{j}^{^{\prime }})\wedge \mu
(b_{j}^{^{\prime }}\big)\Big)\wedge \\
\rule{20mm}{0mm}\underset{b+\sum\limits_{k=1}^{p}\!\!c_{k}d_{k}+z^{\prime
}=\sum\limits_{l=1}^{q}\!\!c_{l}^{^{\prime }}d_{l}^{^{\,\prime }}+z^{\prime }%
}{\sup }\Big(\bigwedge\limits_{k=1}^{p}\big(\lambda (c_{k})\wedge \mu (d_{k})%
\big)\wedge \bigwedge\limits_{l=1}^{q}\big(\lambda (c_{l}^{^{\prime
}})\wedge \mu (d_{l}^{^{\,\prime }}\big)\Big) \\
=\underset{a+\sum\limits_{i=1}^{m}\!\!a_{i}b_{i}+z=\sum\limits_{j=1}^{n}\!%
\!a_{j}^{^{\prime }}b_{j}^{^{\prime }}+z}{\sup }\!\!\left( \underset{%
b+\sum\limits_{k=1}^{p}\!\!c_{k}d_{k}+z^{^{\prime
}}=\sum\limits_{l=1}^{q}\!\!c_{l}^{^{\prime }}d_{l}^{^{\,\prime
}}+z^{^{\prime }}}{\sup }\!\!\left( \!\!%
\begin{array}{l}
\!\!\bigwedge\limits_{i=1}^{m}\!\!\Big(\!\lambda (a_{i})\!\wedge \!\mu
(b_{i})\!\Big)\!\!\wedge \!\bigwedge\limits_{j=1}^{n}\!\!\Big(\!\lambda
(a_{j}^{^{\prime }})\!\wedge \mu (b_{j}^{^{\prime }})\!\Big)\wedge \\
\bigwedge\limits_{k=1}^{p}\!\!\Big(\!\lambda (c_{k})\!\wedge \mu (d_{k})\!%
\Big)\!\!\wedge \!\!\bigwedge\limits_{l=1}^{q}\!\!\Big(\!\lambda
(c_{l}^{^{\prime }})\!\wedge \mu (d_{l}^{^{\,\prime }})\!\Big)%
\end{array}%
\!\!\!\!\right) \!\!\!\right) \\[3pt]
\leq \underset{x+\sum\limits_{s=1}^{u}g_{s}h_{s}+z=\sum%
\limits_{t=1}^{w}g_{t}^{^{\prime }}h_{t}^{^{\prime }}+z}{\sup }\!\!\left(
\bigwedge\limits_{s=1}^{u}\!\!\Big(\lambda (g_{s})\wedge \mu (h_{s})\Big)%
\wedge \bigwedge\limits_{t=1}^{w}\Big(\lambda (g_{t}^{^{\prime }})\!\wedge
\mu (h_{t}^{^{\prime }})\!\Big)\right) =(\lambda \odot _{h}\mu )(x).%
\end{array}%
\end{equation*}

Thus $(\lambda \odot _{h}\mu )(a)\wedge (\lambda \odot _{h}\mu )(b)\leq
(\lambda \odot _{h}\mu )(x)$. This completes the proof that $(\lambda \odot
_{h}\mu )$ is a fuzzy $h$-ideal of $R$.

By simple calculations we can prove that $\lambda \odot _{h}\mu \leq \lambda
\wedge \mu $.
\end{proof}

For $h$-hemiregular hemirings we have stronger result. Namely, as it is
proved in \cite{24}, the following theorem is valid.

\begin{theorem}
\label{T3.4} A hemiring $R$ is $h$-hemiregular if and only if for any fuzzy
right $h$-ideal $\lambda $ and any fuzzy left $h$-ideal $\mu $ of $R$ we have
$\lambda \odot _{h}\mu =\lambda\wedge\mu $.
\end{theorem}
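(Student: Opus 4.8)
The plan is to prove the two implications separately, assembling the tools already in place. For the direction assuming $h$-hemiregularity, fix a fuzzy right $h$-ideal $\lambda$ and a fuzzy left $h$-ideal $\mu$; I would establish $\lambda\odot_h\mu\le\lambda\wedge\mu$ and $\lambda\wedge\mu\le\lambda\odot_h\mu$ and conclude equality. The inequality $\lambda\odot_h\mu\le\lambda\wedge\mu$ needs no regularity hypothesis and is obtained exactly as the ``moreover'' part of Theorem \ref{T3.3}: for any representation $x+\sum_i a_ib_i+z=\sum_j a_j'b_j'+z$, write $c$ for the value of the expression under the supremum. The right-ideal property gives $\lambda(a_ib_i)\ge\lambda(a_i)\ge c$ and $\lambda(a_j'b_j')\ge\lambda(a_j')\ge c$, so by the additive property $\lambda\big(\sum_i a_ib_i\big)\ge c$ and $\lambda\big(\sum_j a_j'b_j'\big)\ge c$; applying the fuzzy $h$-ideal condition to $x+\big(\sum_i a_ib_i\big)+z=\big(\sum_j a_j'b_j'\big)+z$ (with the cancellation element $z$ in the role of $y$) yields $\lambda(x)\ge c$, and taking the supremum over all representations gives $\lambda\odot_h\mu\le\lambda$. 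The symmetric computation using the left-ideal property $\mu(a_ib_i)\ge\mu(b_i)$ gives $\lambda\odot_h\mu\le\mu$.

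For the reverse inequality I would feed in $h$-hemiregularity through Theorem \ref{T2.11}, which gives $\lambda\circ_h\mu=\lambda\wedge\mu$, together with Proposition \ref{P3.2}(1), which gives $\lambda\circ_h\mu\le\lambda\odot_h\mu$. Chaining these, $\lambda\wedge\mu=\lambda\circ_h\mu\le\lambda\odot_h\mu$, so combined with the previous paragraph the two are equal.

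For the converse I would reduce to sets via characteristic functions and Lemma \ref{L2.5}. Let $A$ be a right $h$-ideal and $B$ a left $h$-ideal of $R$; by Proposition \ref{P2.8} (taking $s=0$, $t=1$) the functions $\chi_A$ and $\chi_B$ are a fuzzy right and a fuzzy left $h$-ideal respectively, so the hypothesis applies to them and gives $\chi_A\odot_h\chi_B=\chi_A\wedge\chi_B$. By Proposition \ref{P3.2}(3) the left side is $\chi_{\overline{AB}}$, while the right side is $\chi_{A\cap B}$ since the meet of two characteristic functions is the characteristic function of the intersection. Equality of characteristic functions forces $\overline{AB}=A\cap B$ for every right $h$-ideal $A$ and left $h$-ideal $B$, which is precisely the criterion of Lemma \ref{L2.5} for $R$ to be $h$-hemiregular.

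I expect the only real obstacle to be the bookkeeping in the first inequality: one must propagate the single meet $c$ through the additive property applied to the two multi-term sums $\sum_i a_ib_i$ and $\sum_j a_j'b_j'$, and then recognize that $x+(\sum_i a_ib_i)+z=(\sum_j a_j'b_j')+z$ is an instance of the $h$-ideal axiom $x+a+y=b+y$ with $y:=z$. Everything else is a direct assembly of Theorems \ref{T2.11} and \ref{T3.3}, Proposition \ref{P3.2}, and Lemma \ref{L2.5}, so the proof is short once these are in hand.
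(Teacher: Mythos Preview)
Your argument is correct. Note, however, that the paper does not supply its own proof of Theorem~\ref{T3.4}: it merely quotes the result from \cite{24} with the remark ``as it is proved in \cite{24}, the following theorem is valid.'' So there is no in-paper proof to compare against. Your approach is nonetheless entirely natural given the tools the paper has already assembled: the inequality $\lambda\odot_h\mu\le\lambda\wedge\mu$ is the one-sided analogue of the ``moreover'' clause in Theorem~\ref{T3.3} (and your bookkeeping through the additive and $h$-closure axioms is right), the reverse inequality under $h$-hemiregularity follows cleanly from Theorem~\ref{T2.11} together with Proposition~\ref{P3.2}(1), and the converse via characteristic functions, Proposition~\ref{P3.2}(3), and Lemma~\ref{L2.5} is the standard reduction. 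The only cosmetic point is that Proposition~\ref{P2.8} is stated in the paper for \emph{left} $h$-ideals, so strictly speaking you are invoking its obvious right-sided analogue for $\chi_A$; this is harmless.
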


Comparing this theorem with Theorem \ref{T2.11} we obtain

\begin{corollary}
$\lambda\odot_{h}\mu =\lambda\circ\mu$ for all fuzzy $h$-ideals of any $h$%
-hemiregular hemiring.
\end{corollary}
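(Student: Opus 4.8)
The plan is to read the corollary off directly by juxtaposing the two characterizations of $h$-hemiregularity that have just been established. First I would fix an $h$-hemiregular hemiring $R$ and take two arbitrary fuzzy $h$-ideals $\lambda$ and $\mu$ of $R$. The one point worth noting is that a two-sided fuzzy $h$-ideal is simultaneously a fuzzy left $h$-ideal and a fuzzy right $h$-ideal; in particular $\lambda$ qualifies as a fuzzy right $h$-ideal and $\mu$ as a fuzzy left $h$-ideal, so both Theorem \ref{T2.11} and Theorem \ref{T3.4} apply to the pair $(\lambda,\mu)$.

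With this observation in hand I would invoke the two theorems in turn. Since $R$ is $h$-hemiregular, Theorem \ref{T2.11} gives $\lambda\circ_{h}\mu=\lambda\wedge\mu$, while Theorem \ref{T3.4} gives $\lambda\odot_{h}\mu=\lambda\wedge\mu$. Chaining these two equalities through their common right-hand side $\lambda\wedge\mu$ yields $\lambda\odot_{h}\mu=\lambda\circ_{h}\mu$, which is exactly the asserted identity (here $\circ$ in the statement is understood as the $h$-product $\circ_{h}$ of Definition~\ref{P2.8}'s surrounding discussion).

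There is essentially no obstacle: the entire content of the corollary resides in Theorems \ref{T2.11} and \ref{T3.4}, and the corollary is merely their conjunction restricted to the subclass of two-sided fuzzy $h$-ideals. The only matter requiring slight care is the direction in which the two equivalences are used — I need only the implication ``$h$-hemiregular $\Rightarrow$ equality'' of each theorem, so the hypothesis that $R$ is $h$-hemiregular is invoked twice and nothing further is needed.
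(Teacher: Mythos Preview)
Your argument is correct and matches the paper's own approach exactly: the paper simply remarks ``Comparing this theorem with Theorem~\ref{T2.11} we obtain'' before stating the corollary, which is precisely the juxtaposition of Theorems~\ref{T2.11} and~\ref{T3.4} through their common value $\lambda\wedge\mu$ that you describe. The only (cosmetic) slip is the cross-reference for the definition of $\circ_{h}$, which in the paper is the unnumbered definition following Proposition~\ref{P2.9}, not Proposition~\ref{P2.8}.
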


%%%%%%%%%%%%%%%%%%%%%%%%%%%%%%%%%%%%%%%%%%%%%%%%%%%%%%%%%%%%%%%%

\section{Idempotent $h$-ideals}

%%%%%%%%%%%%%%%%%%%%%%%%%%%%%%%%%%%%%%%%%%%%%%%%%%%%%%%%%%%%%%%%
The concept of h-hemiregularity of a hemiring was introduced in \cite{27} as
a generalization of the concept of regularity of a ring. From results proved
in \cite{27} (see our Lemma \ref{L2.5}) it follows that in $h$-hemiregular hemirings every $h$-ideal $A$ is \textit{$h$-idempotent}, that
is $\overline{AA}=A$. On the other hand, Theorem \ref{T3.4} implies that in
such hemirings we have $\lambda\odot_{h}\lambda=\lambda$ for all fuzzy $h$%
-ideals. Fuzzy $h$-ideals with this property will be called \textit{%
idempotent}.

\begin{proposition}
\label{P4.1} The following statements are equivalent:

\begin{enumerate}
\item Each $h$-ideal of $R$ is $h$-idempotent.

\item $A\cap B=\overline{AB}$ for each pair of $h$-ideals of $R.$

\item $x\in\overline{RxRxR}$ for every $x\in R$.

\item $A\subseteq\overline{RARAR}$ for every non-empty $A\subseteq R$.

\item $A=\overline{RARAR}$ for every $h$-ideal $A$ of $R$.
\end{enumerate}
\end{proposition}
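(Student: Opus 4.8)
The plan is to prove the cycle $(1)\Rightarrow(3)\Rightarrow(4)\Rightarrow(5)\Rightarrow(1)$ and, separately, the equivalence $(1)\Leftrightarrow(2)$, so that all five statements collapse onto one another. Most of these links are routine. For $(1)\Leftrightarrow(2)$, the direction $(2)\Rightarrow(1)$ is just the special case $B=A$, while for $(1)\Rightarrow(2)$ I would use that $A\cap B$ is again an $h$-ideal (Lemma~\ref{L2.1}), so $A\cap B=\overline{(A\cap B)(A\cap B)}\subseteq\overline{AB}$ because $A\cap B\subseteq A$ and $A\cap B\subseteq B$; the opposite inclusion $\overline{AB}\subseteq A\cap B$ is exactly Lemma~\ref{L2.3}. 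The equivalence $(3)\Leftrightarrow(4)$ is bookkeeping: $(4)\Rightarrow(3)$ is the case $A=\{x\}$, and $(3)\Rightarrow(4)$ follows by applying $(3)$ to each $a\in A$ and invoking monotonicity of the $h$-closure, since $\overline{RaRaR}\subseteq\overline{RARAR}$.

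Next I would record the inclusion that holds for every $h$-ideal $A$: writing a typical generator of $RARAR$ as $rar'a'r''=(rar')(a'r'')$ and using that $A$ is two-sided shows $RARAR\subseteq AA\subseteq A$, hence $\overline{RARAR}\subseteq A$. With this in hand, $(4)\Rightarrow(5)$ is immediate, as $(4)$ supplies the reverse inclusion $A\subseteq\overline{RARAR}$; and $(5)\Rightarrow(1)$ follows from $A=\overline{RARAR}\subseteq\overline{AA}\subseteq A$, which forces $\overline{AA}=A$.

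The real difficulty is $(1)\Rightarrow(3)$, where the absence of an identity in $R$ is the whole issue: with a unit one would simply write $x=1x1\in RxR$ and square the principal ideal. My plan is to reduce $(3)$ to the single claim that $x\in\overline{RxR}$. Granting this claim, $\overline{RxR}$ is an $h$-ideal, hence $h$-idempotent by $(1)$, and Lemma~\ref{L2.2} turns this into $\overline{RxR}=\overline{\,\overline{RxR}\,\overline{RxR}\,}=\overline{RxR\,RxR}$; since $RxR\,RxR\subseteq RxRxR\subseteq RxR$, a sandwich gives $\overline{RxR}=\overline{RxRxR}$ and so $x\in\overline{RxRxR}$, which is $(3)$.

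It remains to prove $x\in\overline{RxR}$, and this is where I expect the one genuinely non-obvious idea. Let $A=\overline{(x)}$ be the principal $h$-ideal generated by $x$, with $(x)=\mathbb{Z}^{+}x+Rx+xR+RxR$ the ordinary two-sided ideal. By $(1)$ and Lemma~\ref{L2.2}, $A=\overline{AA}=\overline{(x)(x)}$, so $x$ obeys a relation $x+p+z=q+z$ with $p,q\in(x)(x)$. Expanding a product of two elements of $(x)$ distributively gives finitely many terms, and a short inspection shows every term lies in $RxR$ except those of the two forms $xx$ and $xRx$ (a bare outer $x$ on only one side is absorbed into an $R$-factor, e.g. $rxx=(r)x(x)$ and $xxr'=(x)x(r')$ lie in $RxR$). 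Thus $(x)(x)\subseteq RxR+C$, where $C=\mathbb{Z}^{+}xx+xRx$ collects the stubborn terms, each carrying a bare $x$ at \emph{both} ends. The trick is to feed the same relation back on itself: multiplying $x+p+z=q+z$ on the right by $x$ gives $xx+px+zx=qx+zx$, and because $Cx\subseteq RxR$ and $RxR\cdot x\subseteq RxR$ we get $px,qx\in RxR$, whence $xx\in\overline{RxR}$; multiplying instead by $r'x$ gives $xr'x\in\overline{RxR}$ in the same way. Hence $C\subseteq\overline{RxR}$, so $(x)(x)\subseteq\overline{RxR}$, so $\overline{(x)(x)}\subseteq\overline{RxR}$, and finally $x\in A=\overline{(x)(x)}\subseteq\overline{RxR}$. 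Appending an extra factor $x$ to convert a trailing bare $x$ into a legitimate $R$-factor is the crux of the whole proposition; everything else is formal manipulation of the $h$-closure through Lemmas~\ref{L2.1}--\ref{L2.3}.
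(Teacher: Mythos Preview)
Your proof is correct and follows exactly the same implication scheme as the paper: $(1)\Leftrightarrow(2)$ together with the cycle $(1)\Rightarrow(3)\Rightarrow(4)\Rightarrow(5)\Rightarrow(1)$. The treatments of $(1)\Leftrightarrow(2)$ and of $(3)\Rightarrow(4)\Rightarrow(5)\Rightarrow(1)$ are essentially identical to the paper's.

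Where you diverge is in the key step $(1)\Rightarrow(3)$, and here your argument is strictly more careful. The paper, after writing $\langle x\rangle=\overline{Rx+xR+RxR+Sx}$ and squaring, simply asserts
\[
\overline{(Rx+xR+RxR+Sx)(Rx+xR+RxR+Sx)}\subseteq\overline{RxRRxR},
\]
but in a hemiring without identity this containment is unjustified: the cross-terms $Sx\cdot Sx$ and $xR\cdot Rx$ produce $xx$ and $xRx$, and there is no a~priori reason for these to lie in $\overline{RxRRxR}$. Your two-stage approach---first proving $x\in\overline{RxR}$ by multiplying the $h$-relation $x+p+z=q+z$ on the right by $x$ (respectively $r'x$) so that the stubborn terms $xx$ and $xr'x$ acquire an outer $R$-factor and fall into $RxR$, and only then squaring the $h$-ideal $\overline{RxR}$ via Lemma~\ref{L2.2}---is exactly what is needed to close this gap. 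So your proposal is not merely a valid alternative; it repairs a genuine lacuna in the paper's handling of the only nontrivial implication.
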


\begin{proof}
Indeed, by Lemma \ref{L2.3}, $\overline{AB}\subseteq A\cap B$ for all $h$%
-ideals of $R$. Since $A\cap B$ is an $h$-ideal of $R,$ $(1)$ implies $A\cap
B=\overline{(A\cap B)(A\cap B)}\subseteq\overline{AB}$. Thus $A\cap B=%
\overline{AB}$. So, $(1)$ implies $(2)$. The converse implication is obvious.

It is clear that the smallest $h$-ideal of $R$ containing $x\in R$ has the
form
\begin{equation*}
\langle x\rangle=\overline{\langle x\rangle}=\overline{Rx+xR+RxR+Sx},
\end{equation*}
where $Sx$ is a finite sum of $x$'s. If $(1)$ holds, then $\overline{\langle
x\rangle}= \overline{\overline{\langle x\rangle}\;\overline{\langle x\rangle}%
}= \overline{\langle x\rangle\langle x\rangle}$. Consequently,
\begin{equation*}
\arraycolsep.5mm%
\begin{array}{ll}
x & =0+x\in\overline{Rx+xR+RxR+Sx} \\
& =\overline{(Rx+xR+RxR+Sx)(Rx+xR+RxR+Sx)} \subseteq\overline{RxRRxR}%
\subseteq\overline{RxRxR}%
\end{array}%
\end{equation*}
for every $x\in R$. So, $(1)$ implies $(3)$. Clearly $(3)$ implies $(4)$. If
$(4)$ holds, then for every $h$-ideal of $R$ we have $\overline{A}=A\subseteq%
\overline{RARAR}\subseteq\overline{AA}\subseteq\overline{A}=A$, which proves
$(5)$. The implication $(5)\to (1)$ is obvious.
\end{proof}

As a consequence of the above result and Lemma \ref{L2.5} we obtain the
following characterization of $h$-hemiregularity of commutative hemirings.

\begin{corollary}
A commutative hemiring is $h$-hemiregular if and only if all its $h$-ideals
are $h$-idempotent.
\end{corollary}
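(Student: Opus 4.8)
The plan is to obtain the corollary by assembling two results already at hand: Lemma~\ref{L2.5}, which ties $h$-hemiregularity to the identity $\overline{AB}=A\cap B$ for one-sided $h$-ideals, and the equivalence of conditions $(1)$ and $(3)$ in Proposition~\ref{P4.1}. The key observation I would stress is that commutativity is needed in only one of the two directions.

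First I would dispatch the forward implication, for which no commutativity is required. Suppose $R$ is $h$-hemiregular and let $A$ be any $h$-ideal. Since $A$ is simultaneously a left and a right $h$-ideal, Lemma~\ref{L2.5} applies to the pair $(A,A)$ and gives $\overline{AA}=A\cap A=A$, so $A$ is $h$-idempotent. (Alternatively, Lemma~\ref{L2.5} establishes condition~$(2)$ of Proposition~\ref{P4.1}, whence condition~$(1)$.)

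For the converse I would invoke Proposition~\ref{P4.1}: assuming every $h$-ideal is $h$-idempotent, the implication $(1)\Rightarrow(3)$ yields $a\in\overline{RaRaR}$ for every $a\in R$. Unfolding the $h$-closure, there exist $p,q\in RaRaR$ and $z\in R$ with $a+p+z=q+z$. The decisive step is to use commutativity of $(R,\cdot)$ to simplify the elements of $RaRaR$: a typical element is a finite sum $\sum_i r_i a s_i a t_i$, and each summand rewrites as $a^2 r_i s_i t_i$, so $p=a^2 u$ with $u=\sum_i r_i s_i t_i\in R$ and likewise $q=a^2 v$ with $v\in R$. Since $a^2 u=aua$ and $a^2 v=ava$, the relation $a+p+z=q+z$ becomes $a+aua+z=ava+z$, which is exactly the defining condition of $h$-hemiregularity with $x=u$ and $y=v$. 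Hence $R$ is $h$-hemiregular, and combined with the first paragraph this proves the equivalence.

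I expect the only genuinely delicate point to be the bookkeeping in the converse: one must verify that the finite sums occurring in the ideal product $RaRaR$ really do collapse, via commutativity and factoring out $a^2$, into a single term of the form $aua$, and that the resulting $h$-closure relation is then read off in precisely the shape $a+axa+z=aya+z$ demanded by the definition. Once this translation is confirmed, both implications follow directly from the cited results.
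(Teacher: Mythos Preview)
Your argument is correct. The forward implication is exactly what the paper intends, and your converse via condition~(3) of Proposition~\ref{P4.1} is sound: in a commutative hemiring each summand $r_i a s_i a t_i$ of an element of $RaRaR$ rewrites as $a(r_i s_i t_i)a$, so the whole sum collapses to a single $aua$, and the $h$-closure relation becomes the $h$-hemiregularity condition verbatim.

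The paper's intended route for the converse is slightly different and a touch shorter: rather than passing through condition~(3) and unfolding the closure, one simply observes that in a commutative hemiring every left or right $h$-ideal is automatically two-sided, so condition~(2) of Proposition~\ref{P4.1} (namely $\overline{AB}=A\cap B$ for all pairs of $h$-ideals) already covers all the pairs appearing in Lemma~\ref{L2.5}, and that lemma then yields $h$-hemiregularity directly. Your approach trades this one-line structural observation for an explicit element-level computation; it buys a self-contained verification of the definition, while the paper's route buys brevity by staying at the level of ideals.
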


\begin{proposition}
\label{P4.3} The following statements are equivalent:

\begin{enumerate}
\item Each fuzzy $h$-ideal of $R$ is idempotent.

\item $\lambda\odot_{h}\mu=\lambda\wedge\mu$ for all fuzzy $h$-ideals of $R$.
\end{enumerate}
\end{proposition}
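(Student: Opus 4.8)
The plan is to mirror the structure of the proof of Proposition \ref{P4.1}, where the analogous crisp equivalence was obtained from just two ingredients: the general inequality $\overline{AB}\subseteq A\cap B$ and monotonicity of the product. The fuzzy counterparts are already available: Theorem \ref{T3.3} supplies $\lambda\odot_h\mu\leq\lambda\wedge\mu$ for arbitrary fuzzy $h$-ideals $\lambda,\mu$, and Proposition \ref{P3.2}(2) gives monotonicity of $\odot_h$ in both arguments. The direction $(2)\Rightarrow(1)$ is then immediate: specializing $\mu=\lambda$ in (2) yields $\lambda\odot_h\lambda=\lambda\wedge\lambda=\lambda$, which is exactly idempotence of $\lambda$.

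For the direction $(1)\Rightarrow(2)$, I would first observe that $\lambda\wedge\mu$ is itself a fuzzy $h$-ideal (as noted in the paragraph following the definition of $\odot_h$), so hypothesis (1) applies to it and gives $(\lambda\wedge\mu)\odot_h(\lambda\wedge\mu)=\lambda\wedge\mu$. Next, since $\lambda\wedge\mu\leq\lambda$ and $\lambda\wedge\mu\leq\mu$, Proposition \ref{P3.2}(2) yields $(\lambda\wedge\mu)\odot_h(\lambda\wedge\mu)\leq\lambda\odot_h\mu$. Chaining these two facts gives $\lambda\wedge\mu\leq\lambda\odot_h\mu$, and combining this with the reverse inequality $\lambda\odot_h\mu\leq\lambda\wedge\mu$ from Theorem \ref{T3.3} produces the required equality $\lambda\odot_h\mu=\lambda\wedge\mu$.

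I do not anticipate a genuine obstacle. Once $\lambda\odot_h\mu\leq\lambda\wedge\mu$ and the monotonicity of $\odot_h$ are in hand, the argument is purely formal and needs no computation with the supremum-over-representations that defines $\odot_h$. The single point requiring care is to invoke hypothesis (1) for the fuzzy $h$-ideal $\lambda\wedge\mu$, rather than for $\lambda$ or $\mu$ separately; this is precisely the device that upgrades idempotence of a single fuzzy $h$-ideal into the mixed identity for two distinct fuzzy $h$-ideals.
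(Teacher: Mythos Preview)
Your proposal is correct and follows essentially the same approach as the paper's proof: both apply hypothesis (1) to the fuzzy $h$-ideal $\lambda\wedge\mu$, use the monotonicity of $\odot_h$ from Proposition~\ref{P3.2}(2) to get $\lambda\wedge\mu=(\lambda\wedge\mu)\odot_h(\lambda\wedge\mu)\leq\lambda\odot_h\mu$, and then invoke Theorem~\ref{T3.3} for the reverse inequality. The converse direction $(2)\Rightarrow(1)$ is handled identically by specializing $\mu=\lambda$.
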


\begin{proof}
Let $\lambda$ and $\mu$ be fuzzy $h$-ideals of $R$. Since $\lambda\wedge\mu$
is a fuzzy $h$-ideal of $R$ such that $\lambda\wedge\mu\leq\lambda$ and $%
\lambda\wedge\mu\leq\mu$, Proposition \ref{P3.2} implies $%
(\lambda\wedge\mu)\odot_{h}(\lambda\wedge\mu)\leq\lambda\odot_{h}\mu$. So,
if $\lambda\wedge\mu$ is an idempotent fuzzy $h$-ideal, then $%
\lambda\wedge\mu\leq\lambda\odot_{h}\mu$, which together with Theorem \ref%
{T3.3} gives $\lambda\odot_{h}\mu=\lambda\wedge\mu$. This means that $(1)$
implies $(2)$. The converse implication is obvious.
\end{proof}

Comparing this proposition with Theorem \ref{T3.4} we obtain

\begin{corollary}
A commutative hemiring is $h$-hemiregular if and only if all its fuzzy $h$%
-ideals are idempotent, or equivalently, if and only if $\lambda\odot_{h}%
\mu=\lambda\wedge\mu$ holds for all its fuzzy $h$-ideals.
\end{corollary}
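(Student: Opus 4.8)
The plan is to deduce this corollary by chaining together Theorem~\ref{T3.4} and Proposition~\ref{P4.3}, with the commutativity of $R$ serving only to identify the one-sided and two-sided fuzzy $h$-ideals. The statement involves three conditions: $h$-hemiregularity of $R$, idempotence of all fuzzy $h$-ideals, and the identity $\lambda\odot_{h}\mu=\lambda\wedge\mu$ for all fuzzy $h$-ideals. The latter two are already equated by Proposition~\ref{P4.3}, so it suffices to link $h$-hemiregularity to either of them.

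First I would record the reduction afforded by commutativity: in a commutative hemiring every fuzzy left $h$-ideal is automatically a fuzzy right $h$-ideal and conversely. Indeed, if $\lambda$ is a fuzzy left $h$-ideal, then $\lambda(ab)\ge\lambda(b)$ for all $a,b\in R$, and since $ab=ba$ this also yields $\lambda(ab)=\lambda(ba)\ge\lambda(a)$; the additive and $h$-axioms are side-independent. Hence the three notions---fuzzy left $h$-ideal, fuzzy right $h$-ideal, and fuzzy $h$-ideal---coincide here.

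With this identification in hand, Theorem~\ref{T3.4} reads: $R$ is $h$-hemiregular if and only if $\lambda\odot_{h}\mu=\lambda\wedge\mu$ holds for every pair of fuzzy $h$-ideals $\lambda,\mu$ of $R$, since the hypotheses ``$\lambda$ a fuzzy right $h$-ideal'' and ``$\mu$ a fuzzy left $h$-ideal'' now range over exactly the fuzzy $h$-ideals. Combining this with the equivalence $(1)\Leftrightarrow(2)$ of Proposition~\ref{P4.3} immediately gives the chain of equivalences: $R$ is $h$-hemiregular $\Longleftrightarrow$ $\lambda\odot_{h}\mu=\lambda\wedge\mu$ for all fuzzy $h$-ideals of $R$ $\Longleftrightarrow$ each fuzzy $h$-ideal of $R$ is idempotent, which is precisely the assertion.

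There is no serious obstacle; the content is purely a matter of assembling two previously established results. The only point requiring care is the first step---verifying that commutativity collapses the left/right distinction for fuzzy $h$-ideals---because Theorem~\ref{T3.4} is stated asymmetrically (right $h$-ideal against left $h$-ideal), and without this collapse one could not conclude that $\lambda\odot_{h}\mu=\lambda\wedge\mu$ holds for \emph{all} pairs of fuzzy $h$-ideals rather than merely for the mixed right--left pairs to which the theorem literally applies.
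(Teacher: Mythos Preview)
Your proposal is correct and follows exactly the paper's approach, which simply cites Theorem~\ref{T3.4} together with Proposition~\ref{P4.3}. Your explicit verification that commutativity collapses the left/right distinction for fuzzy $h$-ideals is a helpful elaboration of a step the paper leaves implicit.
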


\begin{theorem}
For hemirings with the identity the following statements are equivalent:

\begin{enumerate}
\item Each $h$-ideal of $R$ is $h$-idempotent.

\item $A\cap B=\overline{AB}$ for each pair of $h$-ideals of $R.$

\item Each fuzzy $h$-ideal of $R$ is idempotent.

\item $\lambda\odot_{h}\mu=\lambda\wedge\mu$ for all fuzzy $h$-ideals of $R$.
\end{enumerate}
\end{theorem}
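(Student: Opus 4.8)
The plan is to establish the theorem by proving the chain of equivalences $(1)\Leftrightarrow(2)$ and $(3)\Leftrightarrow(4)$ for free (these are exactly Propositions~\ref{P4.1} and~\ref{P4.3}, which hold for any hemiring and do not use the identity), and then to link the ``crisp'' statements to the ``fuzzy'' statements by showing $(3)\Rightarrow(1)$ and $(1)\Rightarrow(3)$. Since $(1)\Leftrightarrow(2)$ and $(3)\Leftrightarrow(4)$ are already in hand, it suffices to prove $(1)\Leftrightarrow(3)$; the identity element of $R$ should only be needed in one direction.

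\medskip

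\noindent\textbf{The direction $(3)\Rightarrow(1)$.} I would argue that this implication holds even without the identity, by specializing the fuzzy hypothesis to characteristic functions. Let $A$ be any $h$-ideal of $R$. By Lemma~\ref{trans} (the Transfer Principle) together with Proposition~\ref{P2.8}, the characteristic function $\chi_A$ is a fuzzy $h$-ideal of $R$. If every fuzzy $h$-ideal is idempotent, then $\chi_A\odot_h\chi_A=\chi_A$. But by Proposition~\ref{P3.2}(3) we have $\chi_A\odot_h\chi_A=\chi_{\overline{AA}}$. Hence $\chi_{\overline{AA}}=\chi_A$, which forces $\overline{AA}=A$; that is, $A$ is $h$-idempotent. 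This establishes $(3)\Rightarrow(1)$.

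\medskip

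\noindent\textbf{The direction $(1)\Rightarrow(3)$.} This is the step where I expect the identity of $R$ to be essential, and it is the main obstacle. Let $\lambda$ be a fuzzy $h$-ideal; I must show $\lambda\odot_h\lambda=\lambda$. By Theorem~\ref{T3.3} the inequality $\lambda\odot_h\lambda\le\lambda$ is automatic, so the real work is the reverse inequality $\lambda\le\lambda\odot_h\lambda$, i.e.\ showing $(\lambda\odot_h\lambda)(x)\ge\lambda(x)$ for every $x$. The plan is to feed the crisp hypothesis $(1)$ through the level sets of $\lambda$: for each $t\in\mathrm{Im}\,\lambda$ the set $U(\lambda;t)$ is an $h$-ideal (Transfer Principle), hence $h$-idempotent by $(1)$, so $\overline{U(\lambda;t)\,U(\lambda;t)}=U(\lambda;t)$. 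Fixing $x$ and setting $t=\lambda(x)$, this means $x\in\overline{U(\lambda;t)\,U(\lambda;t)}$, so there are $a,b\in U(\lambda;t)\,U(\lambda;t)$ and $y\in R$ with $x+a+y=b+y$. Here is where the identity enters: to realize $x$ in the specific two-sided form $x+\sum a_ib_i+z=\sum a_j'b_j'+z$ demanded by the definition of $\odot_h$ \emph{with the inner factors themselves lying in $U(\lambda;t)$}, I would use that each generator of $U(\lambda;t)\,U(\lambda;t)$ is a finite sum of products of elements of $U(\lambda;t)$, and multiply by $1$ as needed to absorb the $h$-closure witnesses into products, keeping all factors inside the level set. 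The upshot should be an admissible representation of $x$ in which every factor $a_i,b_i,a_j',b_j'$ satisfies $\lambda(\cdot)\ge t$, whence the corresponding meet in the supremum defining $(\lambda\odot_h\lambda)(x)$ is $\ge t=\lambda(x)$, giving $(\lambda\odot_h\lambda)(x)\ge\lambda(x)$ as required. The delicate point to get right is precisely this bookkeeping: converting the single closure relation $x+a+y=b+y$ into the prescribed $\odot_h$-representation with all inner factors in the level set, which is exactly what the presence of $1$ makes possible (it lets one write $r=r\cdot 1$ and thereby keep one factor in the ideal). Once $(1)\Rightarrow(3)$ is secured, combining it with $(3)\Rightarrow(1)$ and the two already-proved propositions closes the full square of equivalences.
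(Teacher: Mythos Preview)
Your overall architecture matches the paper's: both reduce to proving $(1)\Leftrightarrow(3)$ after invoking Propositions~\ref{P4.1} and~\ref{P4.3}, and both handle $(3)\Rightarrow(1)$ via characteristic functions and Proposition~\ref{P3.2}(3).

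The difference is in $(1)\Rightarrow(3)$. The paper uses the identity to argue that the smallest $h$-ideal containing $x$ is $\overline{RxR}$, whence by $h$-idempotence $x\in\overline{RxRRxR}$; this yields a representation $x+\sum r_ixs_iu_ixt_i+z=\sum r_j'xs_j'u_j'xt_j'+z$, and since $\lambda$ is a two-sided fuzzy $h$-ideal each factor $r_ixs_i$, $u_ixt_i$ has $\lambda$-value $\ge\lambda(x)$, giving $(\lambda\odot_h\lambda)(x)\ge\lambda(x)$. Your level-set route is a genuine alternative, and in fact cleaner than you realize: once $t=\lambda(x)$ and $x\in U(\lambda;t)=\overline{U(\lambda;t)\,U(\lambda;t)}$, the closure witnesses $a,b\in U(\lambda;t)\,U(\lambda;t)$ are \emph{already} finite sums $a=\sum a_ib_i$, $b=\sum a_j'b_j'$ with all factors in $U(\lambda;t)$, so $x+a+y=b+y$ is literally an admissible $\odot_h$-representation with every factor of $\lambda$-value $\ge t$. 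There is no need to ``multiply by $1$'' or to absorb the element $y$ into anything---$y$ simply plays the role of $z$ in the definition of $\odot_h$. Thus your argument actually establishes $(1)\Rightarrow(3)$ without using the identity at all, whereas the paper's version invokes it to get the convenient form $\overline{RxR}$ for the principal $h$-ideal.
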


\begin{proof}
$(1)$ and $(2)$ are equivalent by Proposition \ref{P4.1}, $(3)$ and $(4)$ by
Proposition \ref{P4.3}. To prove that $(1)$ and $(3)$ are equivalent observe
that the smallest $h$-ideal containing $x\in R$ has the form $RxR$. Its
closure $\overline{RxR}$ also is an $h$-ideal. Since, by $(1)$, all $h$%
-ideals of $R$ are $h$-idempotent, we have $\overline{RxR}=\overline{(%
\overline{RxR})(\overline{RxR)}}=\overline{RxRRxR}$ (Lemma \ref{L2.2}). Thus
$x\in\overline{RxR}=\overline{RxRRxR}$ implies
\begin{equation*}
x+\sum_{i=1}^{m}r_{i}xs_{i}u_{i}xt_{i}+z=\sum_{j=1}^{n}r_{j}^{^{\prime
}}xs_{j}^{^{\prime }}u_{j}^{^{\prime }}t_{j}^{^{\prime }}+z.
\end{equation*}
But, by Theorem \ref{T3.3}, for every fuzzy $h$-ideal of $R$ we have $%
\lambda\odot_{h}\lambda\leq\lambda$. Hence $\lambda(x)=\lambda(x)\wedge%
\lambda(x)\leq\bigwedge\limits_{i=1}^m \Big(\lambda(r_{i}xs_{i})\wedge%
\lambda(u_{i}xt_{i})\Big)$. Also $\lambda(x)=\lambda(x)\wedge\lambda(x)\leq%
\bigwedge\limits_{j=1}^n \left(\lambda(r_{j}^{^{\prime }}xs_{j}^{^{\prime
}})\wedge \lambda(u_{j}^{^{\prime }}xt_{j}^{^{\prime }})\right)$. Therefore

$\lambda (x)\leq \bigwedge\limits_{i=1}^m \Big(\lambda(r_{i}xs_{i})\wedge%
\lambda(u_{i}xt_{i})\Big)\wedge \bigwedge\limits_{j=1}^n
\left(\lambda(r_{j}^{^{\prime }}xs_{j}^{^{\prime }})\wedge
\lambda(u_{j}^{^{\prime }}xt_{j}^{^{\prime
}})\right)=M(x,r_i,s_i,r_j^{\prime },s_j^{\prime }) $

\medskip

$\leq\underset{x+\sum\limits_{i=1}^{m}r_{i}xs_{i}u_{i}xt_{i}+z=
\sum\limits_{j=1}^{n}r_{j}^{^{\prime }}xs_{j}^{^{\prime }}u_{j}^{^{\prime
}}t_{j}^{^{\prime }}+z}{\sup} M(x,r_i,s_i,r_j^{\prime },s_j^{\prime
})=(\lambda\odot_{h}\lambda)(x)$.

Hence $\lambda\leq\lambda\odot_{h}\lambda$, which proves $%
\lambda\odot_{h}\lambda=\lambda$. So, $(1)$ implies $(3)$.

Conversely, according to Proposition \ref{P2.8}, the characteristic function
$\chi_A$ of any $h$-ideal $A$ of $R$ is a fuzzy $h$-ideal of $R$. If it is
idempotent, then $\chi_{A}=\chi_A\odot_{h}\chi_A=\chi_{\overline{AA}}$
(Proposition \ref{P3.2}). Thus $A=\overline{AA}$. $(3)$ implies $(1)$.
\end{proof}

\begin{definition}
The \textit{$h$-sum} \ $\lambda+_{h}\mu$ \ of fuzzy subsets $\lambda$ and $%
\mu$ of $R$ is defined by
\begin{equation*}
\left(\lambda+_{h}\mu\right)(x)=\underset{x+(a_{1}+b_{1})+z=(a_{2}+b_{2})+z}{%
\sup} \Big(\lambda(a_{1})\wedge\lambda(a_{2})\wedge\mu(b_{1})\wedge\mu(b_{2})%
\Big),
\end{equation*}
where $x,a_{1},b_{1},a_{2},b_{2},z\in R.$
\end{definition}

\begin{theorem}
\label{T4.7} The $h$-sum of fuzzy $h$-ideals of $R$ also is a fuzzy $h$%
-ideal of $R$.
\end{theorem}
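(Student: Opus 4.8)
The plan is to verify directly that $\lambda +_h \mu$ satisfies the three closure conditions defining a fuzzy $h$-ideal: closure under addition, absorption under multiplication on both sides, and the $h$-condition $x+a+y=b+y \Rightarrow (\lambda+_h\mu)(x)\ge (\lambda+_h\mu)(a)\wedge(\lambda+_h\mu)(b)$. The structural idea in each case is the same one used in the proof of Theorem \ref{T3.3}: given representations witnessing the suprema defining $(\lambda+_h\mu)$ at the ``input'' points, I combine them additively to produce a representation witnessing a lower bound for $(\lambda+_h\mu)$ at the ``output'' point. Since each defining sum here involves a single bracketed term $(a+b)$ rather than a product sum $\sum a_ib_i$, the bookkeeping is lighter than in Theorem \ref{T3.3}.

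First I would handle additivity. Suppose $x_1+(a_1+b_1)+z_1=(a_2+b_2)+z_1$ witnesses a value for $(\lambda+_h\mu)(x_1)$ and $x_2+(c_1+d_1)+z_2=(c_2+d_2)+z_2$ witnesses a value for $(\lambda+_h\mu)(x_2)$. Adding these two equalities and regrouping, I obtain
\begin{equation*}
(x_1+x_2)+\bigl((a_1+c_1)+(b_1+d_1)\bigr)+(z_1+z_2)=\bigl((a_2+c_2)+(b_2+d_2)\bigr)+(z_1+z_2),
\end{equation*}
which is a valid representation for $(\lambda+_h\mu)(x_1+x_2)$ with inner elements $a_1+c_1$, $a_2+c_2$ on the $\lambda$-side and $b_1+d_1$, $b_2+d_2$ on the $\mu$-side. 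Using that $\lambda(a_1+c_1)\ge\lambda(a_1)\wedge\lambda(c_1)$ (and the analogous inequalities for the other three terms, all of which hold because $\lambda,\mu$ are fuzzy $h$-ideals, hence fuzzy ideals closed under addition), the value attached to this combined representation dominates the meet of the two original values; taking suprema yields $(\lambda+_h\mu)(x_1+x_2)\ge(\lambda+_h\mu)(x_1)\wedge(\lambda+_h\mu)(x_2)$.

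Next, for the multiplicative absorption, I start from a representation $x+(a_1+b_1)+z=(a_2+b_2)+z$ and right-multiply by $r\in R$; distributivity gives $xr+(a_1r+b_1r)+zr=(a_2r+b_2r)+zr$, a representation for $(\lambda+_h\mu)(xr)$. Since $\lambda(a_ir)\ge\lambda(a_i)$ and $\mu(b_ir)\ge\mu(b_i)$ (left/right ideal absorption), the attached value is at least the original, so $(\lambda+_h\mu)(xr)\ge(\lambda+_h\mu)(x)$; the left-multiplication case is symmetric. Finally, for the $h$-condition, I would combine a representation for $(\lambda+_h\mu)(a)$ and one for $(\lambda+_h\mu)(b)$ with the hypothesis $x+a+y=b+y$, exactly mimicking equations \eqref{e1}--\eqref{e2} in the proof of Theorem \ref{T3.3}, to manufacture a single representation of $x$ of the required form. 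The main obstacle — and the step I would write out most carefully — is this last $h$-condition derivation: correctly threading the two auxiliary elements $z_1,z_2$ together with $y$ through the additions so that the sum-representation of $x$ genuinely has the form $x+(\,\cdot\,+\,\cdot\,)+z'=(\,\cdot\,+\,\cdot\,)+z'$, and then checking that the meet of $\lambda$- and $\mu$-values for $a$ and $b$ bounds the value of this new representation from below. Everything else reduces to the distributive and additive properties already available for fuzzy ideals.
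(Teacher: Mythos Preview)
Your proposal is correct and follows essentially the same approach as the paper's proof: verify additivity, multiplicative absorption, and the $h$-condition by combining witnessing representations for the inputs into a representation for the output, using the fuzzy-ideal inequalities for $\lambda$ and $\mu$ at each step. The paper carries out the $h$-condition manipulation in full detail (arriving at a representation for $x$ with $\lambda$-components $a_2+c_1,\,a_1+c_2$ and $\mu$-components $b_2+d_1,\,b_1+d_2$, and absorbing $a$ into the slack term $z'$), which matches exactly the computation you flagged as the step to write out carefully.
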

\begin{proof}
Let $\lambda$, $\mu$ be fuzzy $h$-ideals of $R$. Then for $x,y\in R$ we
have

\begin{eqnarray*}
&& (\lambda+_{h}\mu )(x)\wedge(\lambda+_{h}\mu)(y)= \\
&&\underset{x+(a_{1}+b_{1})+z=(a_{2}+b_{2})+z}{\sup} \Big(%
\lambda(a_{1})\wedge\lambda(a_{2})\wedge\mu(b_{1})\wedge\mu(b_{2})\Big)\wedge
\\
&&\rule{28mm}{0mm}\underset{y+(a_{1}^{\prime }+b_{1}^{\prime })+z^{\prime
}=(a_{2}^{\prime }+b_{2}^{\prime })+z^{\prime }}{\sup} \Big(%
\lambda(a_{1}^{\prime })\wedge\lambda(a_{2}^{\prime
})\wedge\mu(b_{1}^{\prime })\wedge\mu(b_{2}^{\prime })\Big) \\
&&=\underset{{\scriptsize
\begin{array}{l}
x+( a_{1}+b_{1})+z=( a_{2}+b_{2}) +z \\
y+( a_{1}^{\prime }+b_{1}^{\prime })+z^{\prime }=(a_{2}^{\prime
}+b_{2}^{\prime })+z^{\prime }%
\end{array}%
} }{\sup}\left(
\begin{array}{c}
\lambda(a_{1})\wedge\lambda(a_{2})\wedge\mu(b_{1})\wedge\mu(b_{2})\wedge \\
\lambda(a_{1}^{\prime })\wedge\lambda(a_{2}^{\prime
})\wedge\mu(b_{1}^{\prime })\wedge\mu(b_{2}^{\prime })%
\end{array}
\right) \\
&&\leq\underset{{\scriptsize
\begin{array}{l}
x+( a_{1}+b_{1})+z=(a_{2}+b_{2})+z \\
y+( a_{1}^{\prime }+b_{1}^{\prime })+z^{\prime }=(a_{2}^{\prime
}+b_{2}^{\prime })+z^{\prime }%
\end{array}
}}{\sup}\left(
\begin{array}{c}
\lambda(a_{1}+a_1^{\prime })\wedge\lambda(a_2+a_2^{\prime })\wedge \\
\mu(b_{1}+b_1^{\prime })\wedge\mu(b_2+b_{2}^{\prime })%
\end{array}
\right) \\
&&\leq\underset{(x+y)+(c_{1}+d_{1})+z^{\prime \prime
}=(c_{2}+d_{2})+z^{\prime \prime }}{\sup} \Big(\lambda(c_{1})\wedge%
\lambda(c_{2})\wedge\mu(d_{1})\wedge\mu(d_{2})\Big) \\[8pt]
&&=\left(\lambda +_{h}\mu \right)(x+y).
\end{eqnarray*}

Similarly,
\begin{eqnarray*}
(\lambda+_{h}\mu)(x)&=&\underset{x+(a_{1}+b_{1})+z=(a_{2}+b_{2})+z}{\sup} %
\Big(\lambda(a_{1})\wedge\lambda(a_{2})\wedge\mu(b_{1})\wedge\mu(b_{2})\Big)
\\
&\leq &\underset{x+(a_{1}+b_{1})+z=(a_{2}+b_{2})+z}{\sup} \Big(%
\lambda(ra_{1})\wedge\lambda(ra_{2})\wedge\mu(rb_{1})\wedge\mu(rb_{2})\Big)
\\
&\leq &\underset{rx+(a_{1}^{\prime \prime }+b_{1}^{\prime \prime
})+z^{\prime \prime }=(a_{2}^{\prime \prime }+b_{2}^{\prime \prime
})+z^{\prime \prime }}{\sup} \Big(\lambda(a_{1}^{\prime \prime
})\wedge\lambda(a_{2}^{\prime \prime })\wedge\mu(b_{1}^{\prime \prime
})\wedge\mu(b_{2}^{\prime \prime })\Big) \\[8pt]
&=&(\lambda +_{h}\mu)(rx).
\end{eqnarray*}

Analogously $(\lambda+_{h}\mu)(x)\leq(\lambda+_{h}\mu)(xr)$. This proves
that $(\lambda+_{h}\mu)$ is a fuzzy ideal of $R$.

Now we show that $x+a+z=b+z$ implies $(\lambda +_{h}\mu)(x)\geq (\lambda
+_{h}\mu)(a)\wedge(\lambda +_{h}\mu)(b)$. For this let $%
a+(a_{1}+b_{1})+z_{1}=(a_{2}+b_{2})+z_{1}$ and $%
b+(c_{1}+d_{1})+z_{2}=(c_{2}+d_{2})+z_{2}$. Then,
\begin{equation*}
a+(c_{2}+d_{2}+z_{2})+(a_{1}+b_{1}+z_{1})=
(a_{2}+b_{2}+z_{1})+(b+c_{1}+d_{1}+z_{2}),
\end{equation*}
whence
\begin{equation*}
a+\left( a_{1}+c_{2}\right) +\left( b_{1}+d_{2}\right) +\left(
z_{1}+z_{2}\right) =b+\left( a_{2}+c_{1}\right) +\left( b_{2}+d_{1}\right)
+\left( z_{1}+z_{2}\right).
\end{equation*}
Consequently
\begin{equation*}
a+\left( a_{1}+c_{2}\right) +\left( b_{1}+d_{2}\right) +\left(
z_{1}+z_{2}+z\right) =b+z+\left( a_{2}+c_{1}\right) +\left(
b_{2}+d_{1}\right) +\left( z_{1}+z_{2}\right)
\end{equation*}
and
\begin{equation*}
a+\left( a_{1}+c_{2}\right) +\left( b_{1}+d_{2}\right) +\left(
z_{1}+z_{2}+z\right) =x+a+z+\left( a_{2}+c_{1}\right) +\left(
b_{2}+d_{1}\right) +\left( z_{1}+z_{2}\right).
\end{equation*}
Thus
\begin{equation*}
x+\left( a_{2}+c_{1}\right) +\left( b_{2}+d_{1}\right) +\left(
z_{1}+z_{2}+z+a\right) =\left( a_{1}+c_{2}\right) +\left( b_{1}+d_{2}\right)
+\left( z_{1}+z_{2}+z+a\right),
\end{equation*}
i.e., $x+(a^{\prime }+b^{\prime })+z^{\prime }=(a^{\prime \prime }+b^{\prime
\prime })+z^{\prime }$ for some $a^{\prime },b^{\prime },a^{\prime \prime
},b^{\prime \prime }\in R$.

\medskip

Therefore
\begin{eqnarray*}
&&\left( \lambda +_{h}\mu \right) \left( a\right) \wedge \left( \lambda
+_{h}\mu \right) \left( b\right)= \\
&&\underset{a+(a_{1}+b_{1})+z_{1}=(a_{2}+b_{2})+z_{1}}{\sup}\Big(\lambda
(a_{1})\wedge\lambda(a_{2})\wedge\mu(b_{1})\wedge\mu(b_{2})\Big)\wedge \\
&&\rule{27mm}{0mm}\underset{b+(c_{1}+d_{1})+z_{2}=(c_{2}+d_{2})+z_{2}}{\sup}%
\Big( \lambda(c_{1})\wedge\lambda(c_{2})\wedge\mu(d_{1})\wedge\mu(d_{2})\Big)
\\
&&=\underset{{\scriptsize
\begin{tabular}{l}
$a+(a_{1}+b_{1})+z_{1}=(a_{2}+b_{2})+z_{1}$ \\
$b+(c_{1}+d_{1})+z_{2}=(c_{2}+d_{2})+z_{2}$%
\end{tabular}
}}{\sup}\left(
\begin{array}{c}
\lambda(a_{1})\wedge\lambda(a_{2})\wedge\mu(b_{1})\wedge\mu(b_{2}) \wedge \\
\lambda(c_{1})\wedge\lambda(c_{2})\wedge\mu(d_{1})\wedge\mu(d_{2})%
\end{array}
\right) \\
&&\leq\underset{{\scriptsize
\begin{tabular}{l}
$a+(a_{1}+b_{1})+z_{1}=(a_{2}+b_{2})+z_{1}$ \\
$b+(c_{1}+d_{1})+z_{2}=(c_{2}+d_{2})+z_{2}$%
\end{tabular}
}}{\sup}\left(
\begin{array}{c}
\lambda \left( a_{1}+c_{2}\right) \wedge \lambda \left(
a_{2}+c_{1}\right)\wedge \\
\mu \left( b_{1}+d_{2}\right) \wedge \mu \left( b_{2}+d_{1}\right)%
\end{array}
\right) \\
&&\leq\underset{x+(a^{\prime }+b^{\prime })+z^{\prime }=(a^{\prime \prime
}+b^{\prime \prime })+z^{\prime }}{\sup} \Big(\lambda(a^{\prime
})\wedge\lambda(a^{\prime \prime })\wedge\mu(b^{\prime })\wedge\mu(b^{\prime
\prime })\Big) \\[8pt]
&&=(\lambda +_{h}\mu)(x) .
\end{eqnarray*}

Thus \ $\lambda +_{h}\mu$ \ is a fuzzy $h$-ideal of $R$.
\end{proof}

\begin{theorem}
\label{T4.8} If all $h$-ideals of $R$ are $h$-idempotent, then the
collection of these $h$-ideals forms a complete Brouwerian lattice.
\end{theorem}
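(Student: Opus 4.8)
The plan is to prove two things: that the $h$-ideals form a complete lattice, and that this lattice obeys the infinite meet-distributive law, after which the Brouwerian property follows by a standard argument using relative pseudocomplements.

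First I would pin down the lattice operations. By Lemma~\ref{L2.1} the intersection $\bigcap_{i}A_{i}$ of any family of $h$-ideals is again an $h$-ideal, so it is their meet. For the join, the finite sums $\sum_{i}A_{i}$ form an ideal containing every $A_{i}$, and its $h$-closure $\overline{\sum_{i}A_{i}}$ is the smallest $h$-ideal containing all the $A_{i}$; hence $\bigvee_{i}A_{i}=\overline{\sum_{i}A_{i}}$. Since every family thus has a meet and a join, and $R$ and $\overline{\{0\}}$ are the greatest and least elements, the lattice of $h$-ideals is complete.

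The heart of the matter is the infinite distributive law
\[
A\cap\bigvee_{i}B_{i}=\bigvee_{i}(A\cap B_{i})
\]
for an $h$-ideal $A$ and a family $\{B_{i}\}$ of $h$-ideals. Here the hypothesis enters through Proposition~\ref{P4.1}, which converts idempotence into the product formula $A\cap B=\overline{AB}$ for $h$-ideals. Applying this to the pair $A$, $\bigvee_{i}B_{i}$ and then invoking Lemma~\ref{L2.2} in the form $\overline{A\,\overline{C}}=\overline{AC}$ (legitimate since $\overline{A}=A$) gives
\[
A\cap\bigvee_{i}B_{i}=\overline{A\,\overline{\textstyle\sum_{i}B_{i}}}=\overline{A\textstyle\sum_{i}B_{i}}.
\]
Distributivity of the product over sums yields $A\sum_{i}B_{i}=\sum_{i}AB_{i}$, so the right-hand side equals $\overline{\sum_{i}AB_{i}}$. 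Finally, since $AB_{i}\subseteq\overline{AB_{i}}\subseteq\overline{\sum_{j}AB_{j}}$ and the latter is closed under addition, one verifies $\overline{\sum_{i}AB_{i}}=\overline{\sum_{i}\overline{AB_{i}}}=\overline{\sum_{i}(A\cap B_{i})}=\bigvee_{i}(A\cap B_{i})$, which is the desired identity.

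With completeness and infinite distributivity established, the Brouwerian property is routine: for $h$-ideals $A$ and $B$ put $A\ast B=\bigvee\{\,C\mid A\cap C\subseteq B\,\}$. The distributive law gives $A\cap(A\ast B)=\bigvee\{A\cap C\mid A\cap C\subseteq B\}\subseteq B$, so $A\ast B$ is the largest $h$-ideal $C$ with $A\cap C\subseteq B$, i.e.\ the relative pseudocomplement of $A$ with respect to $B$; hence the lattice is complete Brouwerian. I expect the main obstacle to be the distributive law itself, and within it the step replacing $\overline{\sum_{i}AB_{i}}$ by $\overline{\sum_{i}\overline{AB_{i}}}$ together with the careful handling of the closure when passing from $\bigvee_{i}B_{i}=\overline{\sum_{i}B_{i}}$ to $\sum_{i}B_{i}$ inside the product; both steps lean essentially on Lemma~\ref{L2.2} and on idempotence via Proposition~\ref{P4.1}.
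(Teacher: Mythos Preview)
Your argument is correct and takes a genuinely different route from the paper. The paper verifies the Brouwerian property directly: it fixes $A,B$, invokes Zorn's Lemma to obtain a maximal element $M$ of $\{I\in\mathcal{L}_R\mid A\cap I\subseteq B\}$, and then checks that $M$ is actually the greatest such element by proving, via an element-wise calculation, that $A\cap\overline{I+M}=\overline{A(\overline{I+M})}\subseteq\overline{\overline{AI}+\overline{AM}}\subseteq B$ for every admissible $I$. You instead establish the infinite join-distributive law $A\cap\bigvee_i B_i=\bigvee_i(A\cap B_i)$ outright, using Proposition~\ref{P4.1} and Lemma~\ref{L2.2} together with the closure manipulation $\overline{\sum_i AB_i}=\overline{\sum_i\overline{AB_i}}$, and then read off the relative pseudocomplement as $\bigvee\{C\mid A\cap C\subseteq B\}$. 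Your route avoids the appeal to Zorn's Lemma (which is in any case superfluous once completeness is known) and yields the distributive law as an explicit byproduct, which the paper only records afterwards as Corollary~\ref{C4.9} by quoting a general lattice-theoretic fact. The underlying computations are essentially the same---both hinge on $A\cap B=\overline{AB}$ and on pushing closures past sums---but your packaging is more economical.
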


\begin{proof}
The collection $\mathcal{L}_{R}$ of all $h$-ideals of $R$ is a poset under
the inclusion of sets. It is not difficult to see that $\mathcal{L}_{R}$ is
a complete lattice under operations $\sqcup$, $\sqcap$ defined as $A\sqcup B=%
\overline{A+B}$ and $A\sqcap B=A\cap B$.

We show that $\mathcal{L}_{R}$ is a Brouwerian lattice, that is, for any $%
A,B\in \mathcal{L}_{R},$ the set $\mathcal{L}_{R}(A,B)=\{I\in\mathcal{L}%
_{R}\,|\,A\cap I\subseteq B\}$ contains a greatest element.

By Zorn's Lemma the set $\mathcal{L}_{R}(A,B)$ contains a maximal element $M$%
. Since each $h$-ideal of $R$ is $h$-idempotent, $\overline{AI}=A\cap
I\subseteq B$ and $\overline{AM}=A\cap M\subseteq B$ (Proposition \ref{P4.1}%
). Thus $\overline{AI}+\overline{AM}\subseteq B$. Consequently, $\overline{%
\overline{AI}+\overline{AM}}\subseteq \overline{B}=B.$

Since $\overline{I+M}=I\sqcup M\in\mathcal{L}_{R}$, for every $x\in\overline{%
I+M}$ there exist $i_{1},i_{2}\in I,$ $m_{1},m_{2}\in M$ and $z\in R$ such
that $x+i_{1}+m_{1}+z=i_{2}+m_{2}+z$. Thus
\begin{equation*}
dx+di_{1}+dm_{1}+dz=di_{2}+dm_{2}+dz
\end{equation*}
for any $d\in D\in\mathcal{L}_{R}$. As $\,di_{1},di_{2}\in DI,$ $%
\,dm_{1},dm_{2}\in DM,$ $\,dz\in R,$ we have $dx\in \overline{DI+DM}$, which
implies $D\left(\overline{I+M}\right) \subseteq \overline{DI+DM}\subseteq
\overline{\overline{DI}+\overline{DM}}\subseteq B$. Hence $\overline{D\left(%
\overline{I+M}\right)}\subseteq B.$ This means that $D\cap\left(\overline{I+M%
}\right)=\overline{D\left( \overline{I+M}\right)}\subseteq B$, i.e., $%
\overline{I+M}\in\mathcal{L}_{R}(A,B)$, whence $\overline{I+M}=M$ because $M$
is maximal in $\mathcal{L}_{R}(A,B)$. Therefore $I\subseteq\overline{I}%
\subseteq \overline{I+M}=M$ for every $I\in \mathcal{L}_{R}(A,B)$.
\end{proof}

\begin{corollary}
\label{C4.9} If all $h$-ideals of $R$ are idempotent, then the lattice $%
\mathcal{L}_{R}$ is distributive.
\end{corollary}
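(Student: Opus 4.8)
The plan is to deduce this immediately from Theorem \ref{T4.8}, invoking the classical lattice-theoretic fact that every Brouwerian lattice is distributive. Under the stated hypothesis Theorem \ref{T4.8} already provides that $\mathcal{L}_R$ is a complete Brouwerian lattice, so the only thing left to do is to record why the Brouwerian property alone forces the distributive law; no further hemiring-theoretic input is needed.

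First I would fix $A,B,C\in\mathcal{L}_R$ and recall that in any lattice the inequality $(A\sqcap B)\sqcup(A\sqcap C)\le A\sqcap(B\sqcup C)$ holds automatically by monotonicity, so the entire content is the reverse inclusion. Writing $D=(A\sqcap B)\sqcup(A\sqcap C)$, I would apply the Brouwerian property to the pair $A,D$: by definition the set $\mathcal{L}_R(A,D)=\{I\in\mathcal{L}_R\mid A\cap I\subseteq D\}$ possesses a greatest element, say $M$ (its relative pseudocomplement).

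The key step is the observation that both $B$ and $C$ lie in $\mathcal{L}_R(A,D)$. Indeed $A\sqcap B\le(A\sqcap B)\sqcup(A\sqcap C)=D$ and likewise $A\sqcap C\le D$, so $B,C\in\mathcal{L}_R(A,D)$ and hence $B\le M$ and $C\le M$. As $M$ is then an upper bound for both, $B\sqcup C\le M$, and monotonicity of $\sqcap$ yields $A\sqcap(B\sqcup C)\le A\sqcap M\subseteq D=(A\sqcap B)\sqcup(A\sqcap C)$. Combining the two inequalities gives $A\sqcap(B\sqcup C)=(A\sqcap B)\sqcup(A\sqcap C)$, so $\mathcal{L}_R$ is distributive.

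I do not expect a serious obstacle here, since the argument is purely formal once Theorem \ref{T4.8} is in hand; the genuinely hard work was already done in constructing the Brouwerian structure there. The only point requiring a little care is to keep the abstract lattice operations $\sqcap=\cap$ and $\sqcup=\overline{\,\cdot+\cdot\,}$ straight and to argue solely from the \emph{maximality} of $M$ in $\mathcal{L}_R(A,D)$ rather than from any explicit formula for the relative pseudocomplement; with that understood the identity drops out of two one-line inequalities.
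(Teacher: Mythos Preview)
Your proposal is correct and follows exactly the paper's approach: invoke Theorem~\ref{T4.8} to obtain a complete Brouwerian lattice, then use the classical fact that Brouwerian lattices are distributive. The only difference is that the paper simply cites Birkhoff \cite{6}, 11.11, for this lattice-theoretic fact, whereas you spell out the standard two-line argument via the relative pseudocomplement; your write-up is thus a slight elaboration of the paper's one-sentence proof rather than a different route.
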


\begin{proof}
Each complete Brouwerian lattice is distributive (cf. \cite{6}, 11.11).
\end{proof}

\begin{theorem}
\label{f-lattice} Each fuzzy $h$-ideal of $R$ is $h$-idempotent if and only
if the set of all fuzzy $h$-ideals of $R$ (ordered by $\leq )$ forms a
distributive lattice under the $h$-sum and $h$-intrinsic product of fuzzy $h$%
-ideals with \ $\lambda\odot_{h}\mu=\lambda\wedge\mu$.
\end{theorem}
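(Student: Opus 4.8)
The plan is to handle the two implications separately, with essentially all the work in the forward direction. For the converse, suppose the fuzzy $h$-ideals of $R$ form a distributive lattice under $+_{h}$ and $\odot_{h}$ with $\lambda\odot_{h}\mu=\lambda\wedge\mu$. This identity is part of the hypothesis, so specialising it to $\mu=\lambda$ gives $\lambda\odot_{h}\lambda=\lambda\wedge\lambda=\lambda$; hence every fuzzy $h$-ideal is idempotent. (Equivalently, one quotes Proposition \ref{P4.3}.) So the backward implication is immediate and carries no real content.

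For the forward implication, assume every fuzzy $h$-ideal of $R$ is idempotent. By Proposition \ref{P4.3} this is equivalent to $\lambda\odot_{h}\mu=\lambda\wedge\mu$ for all fuzzy $h$-ideals, so the intrinsic product already coincides with the pointwise meet. First I would assemble the lattice: the fuzzy $h$-ideals are closed under $\wedge$ and, by Theorems \ref{T3.3} and \ref{T4.7}, under $\odot_{h}$ and $+_{h}$; arbitrary pointwise infima of fuzzy $h$-ideals are again fuzzy $h$-ideals (the fuzzy analogue of Lemma \ref{L2.1}), so the poset is a complete lattice whose meet of $\lambda,\mu$ is $\lambda\wedge\mu=\lambda\odot_{h}\mu$. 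One then checks that $\lambda+_{h}\mu$ realises the supremum of $\lambda$ and $\mu$. This is the step where the value at $0$ matters: since every term in the defining supremum is bounded by $\mu(b_{1})\le\mu(0)$, one has $(\lambda+_{h}\mu)(x)\le\lambda(0)\wedge\mu(0)$ for all $x$, so $+_{h}$ is a join only among fuzzy $h$-ideals sharing their peak value at $0$. I would therefore either restrict to the (normalised) fuzzy $h$-ideals or record $\lambda(0)=\mu(0)$ as the condition under which $+_{h}$ is the supremum.

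The heart of the argument is distributivity, and here I would mirror the proof of Theorem \ref{T4.8} rather than grind out a distributive law directly. The goal is to show the lattice is Brouwerian: for fuzzy $h$-ideals $\lambda,\mu$ the set $\{\theta : \lambda\wedge\theta\le\mu\}$ has a greatest element. Pointwise suprema of chains of fuzzy $h$-ideals are fuzzy $h$-ideals, and $\wedge$ commutes with such directed suprema, so $\lambda\wedge\theta^{*}=\sup_{i}(\lambda\wedge\theta_{i})\le\mu$ for the sup $\theta^{*}$ of a chain; thus Zorn's Lemma supplies a maximal element $M$. For an arbitrary $\theta$ in the set I would use the one-sided distributive inequality $\lambda\odot_{h}(\theta+_{h}M)\le(\lambda\odot_{h}\theta)+_{h}(\lambda\odot_{h}M)$, together with idempotency ($\odot_{h}=\wedge$), monotonicity of $+_{h}$, and $\mu+_{h}\mu=\mu$, to obtain $\lambda\wedge(\theta+_{h}M)=\lambda\odot_{h}(\theta+_{h}M)\le(\lambda\wedge\theta)+_{h}(\lambda\wedge M)\le\mu+_{h}\mu=\mu$. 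Hence $\theta+_{h}M$ lies in the set, so by maximality $\theta+_{h}M=M$ and therefore $\theta\le M$, making $M$ the greatest element. With the lattice complete and Brouwerian, distributivity follows exactly as in Corollary \ref{C4.9} from \cite{6}, 11.11.

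I expect two obstacles. The first, and the genuinely technical one, is the inequality $\lambda\odot_{h}(\theta+_{h}M)\le(\lambda\odot_{h}\theta)+_{h}(\lambda\odot_{h}M)$: this is the fuzzy counterpart of the containment $D(\overline{I+M})\subseteq\overline{DI+DM}$ used in Theorem \ref{T4.8}, and it must be verified directly from the supremum-of-infima definitions of $\odot_{h}$ and $+_{h}$ by distributing across each representation $x+\sum a_{i}b_{i}+z=\sum a_{j}'b_{j}'+z$. The second, more bookkeeping than conceptual, is confirming that $+_{h}$ is genuinely the supremum, so that $M\le\theta+_{h}M$ and the maximality step actually closes; this is precisely where the common value at $0$ (normalisation) is needed, and it is the point I would be most careful to state explicitly.
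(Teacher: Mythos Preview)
Your proposal is a reasonable strategy, but it takes a substantially different and more elaborate route than the paper.

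The paper does not go through the Brouwerian machinery at all. Once Proposition~\ref{P4.3} gives $\lambda\odot_{h}\mu=\lambda\wedge\mu$, the paper simply verifies one distributive identity directly, by a single pointwise computation:
\[
\big((\lambda\odot_{h}\delta)+_{h}\mu\big)(x)
=\big((\lambda\wedge\delta)+_{h}\mu\big)(x)
=\big((\lambda+_{h}\mu)\wedge(\delta+_{h}\mu)\big)(x)
=\big((\lambda+_{h}\mu)\odot_{h}(\delta+_{h}\mu)\big)(x),
\]
expanding the definition of $+_{h}$ and splitting the infimum $\lambda(a_{1})\wedge\lambda(a_{2})\wedge\delta(a_{1})\wedge\delta(a_{2})\wedge\mu(b_{1})\wedge\mu(b_{2})$ into the $\lambda$-part and the $\delta$-part. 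That is the entire forward argument: no Zorn's Lemma, no maximal elements, no relative pseudo-complements, no verification that $+_{h}$ realises the supremum, and no discussion of normalisation at $0$. The converse is handled, as you do, by a one-line appeal to Proposition~\ref{P4.3}.

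Your approach, by contrast, mirrors Theorem~\ref{T4.8} in the fuzzy setting: build a complete lattice, use Zorn to locate a maximal $M$ with $\lambda\wedge M\le\mu$, then push an arbitrary $\theta$ under $M$ via the sub-distributive inequality $\lambda\odot_{h}(\theta+_{h}M)\le(\lambda\odot_{h}\theta)+_{h}(\lambda\odot_{h}M)$. This is conceptually coherent and would establish distributivity, but it is considerably heavier: you yourself flag the two genuine technical debts (the sub-distributive inequality and the question of whether $+_{h}$ is the join, with its attendant normalisation issue), neither of which the paper incurs. What your route buys is structural insight parallel to the crisp case; what the paper's route buys is brevity, since once $\odot_{h}$ collapses to pointwise $\wedge$ the distributive law becomes an identity about suprema of minima that can be attacked at a single point $x$ without any lattice-theoretic scaffolding.
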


\begin{proof}
Assume that all fuzzy $h$-ideals of $R$ are idempotent. Then $%
\lambda\odot_{h}\mu=\lambda\wedge\mu$ (Proposition \ref{P4.3}) and, as it is
not difficult to see, the set $\mathcal{FL}_{R}$ of all fuzzy $h$-ideals of $%
R$ (ordered by $\leq $) is a lattice under the $h$-sum and $h$-intrinsic
product of fuzzy $h$-ideals.

We show that \ $(\lambda\odot_{h}\delta)+_{h}\mu=(\lambda+_{h}\mu)\odot_{h}(%
\delta+_{h}\mu)$ \ for all $\lambda,\mu,\delta\in\mathcal{FL}_{R}$. Indeed,
for any $x\in R$ we have
\begin{eqnarray*}
&&\Big((\lambda\odot_{h}\delta)+_{h}\mu\Big)(x)=\Big((\lambda\odot_{h}%
\delta)+_{h}\mu\Big)(x) \\
&&=\underset{x+(a_{1}+b_{1})+z=(a_{2}+b_{2})+z}{\sup} \Big(%
(\lambda\wedge\delta)(a_{1})\wedge(\lambda\wedge\delta)(a_{2})
\wedge\mu(b_{1})\wedge\mu( b_{2})\Big) \\
&&=\underset{x+(a_{1}+b_{1})+z=(a_{2}+b_{2})+z}{\sup}\Big(\lambda
(a_{1})\wedge\lambda(a_{2})\wedge\mu(b_{1})\wedge\mu(b_{2})\wedge
\delta(a_{1})\wedge\delta(a_{2})\Big) \\
&&=\underset{x+(a_{1}+b_{1})+z=(a_{2}+b_{2})+z}{\sup} \Big(%
\lambda(a_{1})\wedge\lambda(a_{2})\wedge\mu(b_{1})\wedge\mu(b_{2})\Big)\wedge
\\
&&\rule{35mm}{0mm} \underset{x+(a_{1}+b_{1})+z=(a_{2}+b_{2})+z}{\sup} \Big(%
\delta(a_{1})\wedge\delta(a_{2})\wedge\mu(b_{1}) \wedge\mu(b_{2})\Big) \\
&&=(\lambda+_h\mu)(x)\wedge(\delta+_h\mu)(x)= \Big((\lambda+_h\mu)\wedge(%
\delta+_h\mu)\Big)(x) \\
&&=\Big((\lambda+_h\mu)\odot_{h}(\delta+_h\mu)\Big)(x) .
\end{eqnarray*}
So, $\mathcal{FL}_{R}$ is a distributive lattice.

The converse statement is a consequence of Proposition \ref{P4.3}.
\end{proof}

%%%%%%%%%%%%%%%%%%%%%%%%%%%%%%%%%%%%%%%%%%%%%%%%%%%%%%%%%%%%%%%%%%%%%%%%%%%

\section{Prime ideals}

%%%%%%%%%%%%%%%%%%%%%%%%%%%%%%%%%%%%%%%%%%%%%%%%%%%%%%%%%%%%%%%%%%%%%%%%%%%
An $h$-ideal $P$ of $R$ is called \textit{prime} if $P\ne R$ and for any $h$%
-ideals $A$, $B$ of $R$ from $AB\subseteq P$ it follows $A\subseteq P$ or $%
B\subseteq P$, and \textit{irreducible} if $P\ne R$ and $A\cap B=P$ implies $%
A=P$ or $B=P$. By analogy a non-constant fuzzy $h$-ideal $\delta$ of $R$ is
called \textit{prime} (in the first sense) if for any fuzzy $h$-ideals $%
\lambda$, $\mu$ of $R$ from $\lambda\odot_{h}\mu\leq\delta$ it follows $%
\lambda\leq\delta$ or $\mu\leq\delta$, and \textit{irreducible} if $%
\lambda\wedge\mu=\delta$ implies $\lambda=\delta$ or $\mu =\delta$.

\begin{theorem}
\label{T5.1} A left $($right$)$ $h$-ideal $P$ of $R$ is prime if and only if
for all $a,b\in R$ from $aRb\subseteq P$ it follows $a\in P$ or $b\in P$.
\end{theorem}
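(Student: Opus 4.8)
The plan is to prove both implications, reading $P$ as an $h$-ideal (the notion of \emph{prime} having been defined for $h$-ideals), with the left and right versions being symmetric; I will spell out the two-sided case.

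For the direction ``element condition $\Rightarrow$ prime'' I would argue directly. Assume that $aRb\subseteq P$ forces $a\in P$ or $b\in P$ for all $a,b\in R$, and let $A,B$ be $h$-ideals with $AB\subseteq P$ and $A\not\subseteq P$. Fix $a\in A\setminus P$. For every $b\in B$ and every $r\in R$ we have $ar\in A$ (as $A$ is in particular a right ideal), whence $(ar)b\in AB\subseteq P$; thus $arb\in P$ for all $r$, i.e. $aRb\subseteq P$. The hypothesis gives $a\in P$ or $b\in P$, and since $a\notin P$ we conclude $b\in P$. As $b\in B$ was arbitrary, $B\subseteq P$, so $P$ is prime. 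This step uses only that $P$ is additively and multiplicatively absorbing and $h$-closed, and it goes through in any hemiring.

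For the converse (prime $\Rightarrow$ element condition) I would pass to the principal $h$-ideals $\langle a\rangle$ and $\langle b\rangle$, noting $a\in\langle a\rangle$ and $b\in\langle b\rangle$, and aim to show $\overline{\langle a\rangle\,\langle b\rangle}\subseteq P$. Writing $\langle a\rangle=\overline{Ra+aR+RaR+Sa}$ and similarly for $\langle b\rangle$, Lemma \ref{L2.2} reduces this to checking that each generating product of $(Ra+aR+RaR+Sa)$ with $(Rb+bR+RbR+Sb)$ lies in $P$. Most such products collapse at once: any product in which at least one ring element sits between the displayed $a$ and $b$ has the form $r_1(a\,r_2\,b)\,r_3$ with $ar_2b\in aRb\subseteq P$, and therefore lands in $P$ because $P$ is a two-sided $h$-ideal (so $RP,\,PR,\,RPR\subseteq P$ and $\overline{P}=P$). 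Once $\overline{\langle a\rangle\,\langle b\rangle}\subseteq P$ is secured, primeness yields $\langle a\rangle\subseteq P$ or $\langle b\rangle\subseteq P$, i.e. $a\in P$ or $b\in P$.

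The main obstacle is precisely the family of products that juxtapose $a$ and $b$ with nothing between them, producing a bare $ab$ (together with its multiples $r\,ab$, $ab\,r$, $n\,ab$); these are not controlled by $aRb\subseteq P$ alone. I expect to dispatch them by observing that in the presence of an identity $ab=a\cdot 1\cdot b\in aRb\subseteq P$, so that $\langle a\rangle$ collapses to $\overline{RaR}$ and no bare $ab$ term survives in the product $\langle a\rangle\langle b\rangle$; this is the clean route, and it is the one delicate point where the structure of $R$ really enters. I would then record the symmetric argument verbatim for a right $h$-ideal.
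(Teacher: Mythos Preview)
Your direction ``element condition $\Rightarrow$ prime'' is correct and is exactly what the paper dismisses as ``obvious''.

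For the converse, you have correctly located the real difficulty---the bare products $ab$ with no ring element in between---but your proposed remedy is a genuine gap: the theorem is stated for an arbitrary hemiring $R$, with no identity assumed, so the move $ab=a\cdot 1\cdot b\in aRb$ is unavailable, and $\langle a\rangle$ does \emph{not} collapse to $\overline{RaR}$. With your choice of test ideals $\langle a\rangle,\langle b\rangle$, the product $\langle a\rangle\langle b\rangle$ really does contain terms of the shape $ab$, $r\,ab$, $ab\,r$, $n\,ab$ that $aRb\subseteq P$ alone cannot place in $P$, and the argument stalls.

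The paper sidesteps this not by adding hypotheses but by choosing different test ideals. Working in the left case, it takes $A=\overline{Ra}$ and $B=\overline{Rb}$, which are left $h$-ideals. Every generator of $Ra\cdot Rb$ has the shape $(r_1a)(r_2b)=r_1(ar_2b)$, so a ring element is \emph{automatically} sandwiched between $a$ and $b$; hence $RaRb\subseteq R(aRb)\subseteq RP\subseteq P$, and by Lemma~\ref{L2.2} one gets $AB\subseteq\overline{AB}=\overline{\,\overline{Ra}\,\overline{Rb}\,}=\overline{RaRb}\subseteq\overline{P}=P$. Primeness then yields $A\subseteq P$ or $B\subseteq P$, and one recovers $a\in P$ or $b\in P$ from that. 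The key idea you are missing is thus not an identity but the choice of $\overline{Ra},\overline{Rb}$ so that no bare $ab$ ever appears in the product. (You may still want to scrutinize the final step---passing from $\overline{Ra}\subseteq P$ back to $a\in P$---in the absence of an identity; the paper invokes the principal left $h$-ideal for this.)
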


\begin{proof}
Assume that $P$ is a prime left $h$-ideal of $R$ and $aRb\subseteq P$ for
some $a,b\in R$. Obviously, $A=\overline{Ra}$ and $B=\overline{Rb}$ are left
$h$-ideals of $R$. So, $AB\subseteq \overline{AB}=\overline{\overline{Ra}%
\overline{Rb}}=\overline{RaRb}\subseteq \overline{RP}\subseteq P$, and
consequently $A\subseteq P$ or $B\subseteq P$. Let $\langle x\rangle $ be a
left $h$-ideal generated by $x\in R$. If $A\subseteq P$, then $\langle
a\rangle \subseteq \overline{Ra}=A\subseteq P$, whence $a\in P$. If $%
B\subseteq P$, then $\langle b\rangle \subseteq \overline{Rb}=B\subseteq P$,
whence $b\in P$.

The converse is obvious.
\end{proof}

\begin{corollary}
\label{C5.2} An $h$-ideal $P$ of $R$ is prime if and only if for all $a,b\in
R$ from $aRb\subseteq P$ it follows $a\in P$ or $b\in P$.
\end{corollary}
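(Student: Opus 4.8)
The plan is to deduce this from Theorem \ref{T5.1}. Since every (two-sided) $h$-ideal is in particular a left $h$-ideal, and the condition appearing in the statement is exactly the right-hand side of Theorem \ref{T5.1}, it suffices to show that for a two-sided $h$-ideal $P$ the two notions of primeness coincide: that $P$ is prime as a two-sided $h$-ideal if and only if it is prime as a left $h$-ideal. One implication is immediate, since two-sided $h$-ideals form a subclass of left $h$-ideals: if $P$ is prime as a left $h$-ideal, then any two-sided $h$-ideals $A,B$ with $AB\subseteq P$ are in particular left $h$-ideals, so $A\subseteq P$ or $B\subseteq P$; thus left-primeness implies two-sided-primeness. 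Composing this equivalence with Theorem \ref{T5.1} will give both directions of the corollary at once.

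The substantive direction is that two-sided primeness implies left primeness. Here I would begin with left $h$-ideals $A,B$ satisfying $AB\subseteq P$ and manufacture two-sided $h$-ideals out of them. The natural candidates are $\hat A=\overline{A+AR}$ and $\hat B=\overline{B+BR}$; one checks that $A+AR$ and $B+BR$ are genuinely two-sided ideals using that $A,B$ are left ideals (so $RA\subseteq A$, $RB\subseteq B$), whence $\hat A,\hat B$ are two-sided $h$-ideals containing $A,B$ respectively. The key computation is $\hat A\,\hat B\subseteq P$. By Lemma \ref{L2.2}, $\overline{\hat A\,\hat B}=\overline{(A+AR)(B+BR)}$, and expanding $(A+AR)(B+BR)=AB+ABR+ARB+ARBR$, every summand lands in $P$: we have $AB\subseteq P$ by hypothesis, $ARB\subseteq A(RB)\subseteq AB\subseteq P$ because $RB\subseteq B$, and the remaining two terms $ABR=(AB)R$ and $ARBR=(ARB)R$ are right multiples of elements already shown to lie in $P$, hence lie in $P$ since $P$ is a two-sided ideal ($PR\subseteq P$). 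Therefore $(A+AR)(B+BR)\subseteq P$, so $\hat A\,\hat B\subseteq\overline{\hat A\,\hat B}\subseteq\overline P=P$.

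Now two-sided primeness of $P$ forces $\hat A\subseteq P$ or $\hat B\subseteq P$, and since $A\subseteq\hat A$ and $B\subseteq\hat B$ we conclude $A\subseteq P$ or $B\subseteq P$; thus $P$ is prime as a left $h$-ideal. With both implications in hand, Theorem \ref{T5.1} converts left-primeness of $P$ into the statement ``$aRb\subseteq P$ implies $a\in P$ or $b\in P$,'' which is the claim. The one delicate point I would verify most carefully is the middle term $ARB$ of the expansion: the observation $RB\subseteq B$ is exactly what collapses $ARB$ back into $AB\subseteq P$ and makes the passage from $A,B$ to the larger $\hat A,\hat B$ harmless. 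Without this collapse the product $\hat A\,\hat B$ could acquire genuinely new elements (products of the type $ab$ with no factor of $R$ interposed) which need not lie in $P$, and the argument would break down; so this is the step on which the whole reduction rests.
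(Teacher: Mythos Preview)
Your proof is correct. The paper itself offers no separate argument for Corollary~\ref{C5.2}, treating it as an immediate consequence of Theorem~\ref{T5.1}; the intended reading is presumably that the proof of Theorem~\ref{T5.1} can be rerun verbatim with ``left $h$-ideal'' replaced by ``$h$-ideal'' throughout (using, say, $\overline{RaR}$ and $\overline{RbR}$ in place of $\overline{Ra}$ and $\overline{Rb}$). Your route is genuinely different: rather than replaying that argument, you isolate the one non-obvious point---that for a two-sided $h$-ideal $P$ the notions of primeness tested against left $h$-ideals and against two-sided $h$-ideals coincide---and prove it directly via the enlargements $\hat A=\overline{A+AR}$ and $\hat B=\overline{B+BR}$. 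This reduction is clean and makes explicit exactly where the two-sidedness of $P$ is used (the terms $ABR$ and $ARBR$ land in $PR\subseteq P$), after which Theorem~\ref{T5.1} can be invoked as a black box. Both approaches reach the same conclusion; yours is more modular, the paper's more economical.

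One small quibble with your final paragraph: the parenthetical ``products of the type $ab$ with no factor of $R$ interposed'' has things backwards. Those products already lie in $AB\subseteq P$ by hypothesis; the potentially troublesome terms are the ones in $ARB$, \emph{with} a factor of $R$ interposed, and it is precisely $RB\subseteq B$ that collapses them back into $AB$. The mathematics you wrote is fine; only the verbal gloss slipped.
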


\begin{corollary}
\label{C5.3} An $h$-ideal $P$ of a commutative hemiring $R$ with identity is
prime if and only if for all $a,b\in R$ from $ab\in P$ it follows $%
a\in P$ or $b\in P$.
\end{corollary}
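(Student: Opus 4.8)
The plan is to derive this corollary directly from Corollary \ref{C5.2}, which already characterizes primeness of an $h$-ideal $P$ by the condition that $aRb\subseteq P$ implies $a\in P$ or $b\in P$. Since the present statement replaces the inclusion $aRb\subseteq P$ by the single membership $ab\in P$, the whole task reduces to showing that, for a fixed pair $a,b$ in a commutative hemiring with identity, these two hypotheses are equivalent. I expect commutativity to be used in one direction and the existence of the identity in the other.

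First I would prove the forward implication. Assume $P$ is prime and $ab\in P$. For every $r\in R$, commutativity of multiplication gives $arb=(ab)r$, and since $ab\in P$ and $P$ is a (right) ideal, $(ab)r\in P$. Hence $aRb\subseteq P$, and Corollary \ref{C5.2} yields $a\in P$ or $b\in P$, which is exactly what is required.

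For the converse, assume that $ab\in P$ always forces $a\in P$ or $b\in P$, and let $a,b\in R$ satisfy $aRb\subseteq P$. Specializing $r=1$ and using the identity of $R$, we get $ab=a\cdot 1\cdot b\in aRb\subseteq P$, so by hypothesis $a\in P$ or $b\in P$. Thus the hypothesis of Corollary \ref{C5.2} holds, and $P$ is prime.

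There is no genuine obstacle here; the argument is a two-line reduction in each direction. The only point worth stating carefully is the division of labour between the two structural assumptions: commutativity is what lets us pass from $ab\in P$ to $arb\in P$ (so that the weaker-looking membership condition already forces $aRb\subseteq P$), while the identity is what lets us recover $ab\in P$ from $aRb\subseteq P$ by taking $r=1$. The argument used here would break down if either hypothesis were dropped, so both are genuinely needed in the statement.
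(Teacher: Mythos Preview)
Your proof is correct and is exactly the natural derivation from Corollary~\ref{C5.2} that the paper intends; the paper states this corollary without proof, leaving the reduction to the reader. Your remark on the division of labour between commutativity (to obtain $arb=(ab)r\in P$) and the identity (to recover $ab=a\cdot 1\cdot b\in aRb$) is accurate and worth keeping.
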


The result expressed by Corollary \ref{C5.2} suggests the following
definition of prime fuzzy $h$-ideals.

\begin{definition}
\label{D5.4} A non-constant fuzzy $h$-ideal $\delta$ of $R$ is called
\textit{prime} (in the second sense) if for all $t\in [0,1]$ and $a,b\in R$
the following condition is satisfied:

\smallskip

if $\delta(axb)\geq t$ for every $x\in R$ then $\delta(a)\geq t$ or $%
\delta(b)\geq t$.
\end{definition}

In other words, a non-constant fuzzy $h$-ideal $\delta$ is prime if from the
fact that $axb\in U(\delta;t)$ for every $x\in R$ it follows $a\in
U(\delta;t)$ or $b\in U(\delta;t)$. It is clear that any fuzzy $h$-ideal
prime in the first sense is prime in the second sense. The converse is not
true.

\begin{example}
\label{Ex5.5} In an ordinary hemiring of natural numbers the set of even
numbers forms an $h$-ideal. A fuzzy set
\begin{equation*}
\delta(n)=\left\{%
\begin{array}{ccl}
1 & \mathrm{if } & n=0, \\
0.8 & \mathrm{if } & n=2k\ne 0, \\
0.4 & \mathrm{if } & n=2k+1%
\end{array}%
\right.
\end{equation*}
is a fuzzy $h$-ideal of this hemiring. It is prime in the second sense but
it is not prime in the first sense.
\end{example}

\begin{theorem}
\label{T5.5} A non-constant fuzzy $h$-ideal $\delta$ of $R$ is prime in the
second sense if and only if each its proper level set $U(\delta;t)$ is a
prime $h$-ideal of $R$.
\end{theorem}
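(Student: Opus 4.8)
The plan is to translate the primeness condition on $\delta$ into the set-theoretic primeness condition on its level sets, using the reformulation of prime $h$-ideals from Corollary \ref{C5.2} (namely, an $h$-ideal $P$ is prime precisely when $aRb\subseteq P$ forces $a\in P$ or $b\in P$) together with the Transfer Principle (Lemma \ref{trans}). The key dictionary, to be set up at the outset, is that the hypothesis ``$\delta(axb)\ge t$ for every $x\in R$'' occurring in Definition \ref{D5.4} is literally the inclusion $aRb\subseteq U(\delta;t)$, while ``$\delta(a)\ge t$'' means ``$a\in U(\delta;t)$''. Once this is in place, both directions become a matter of reading the same statement on the two sides of the membership/inequality correspondence.

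For the forward implication, I would assume $\delta$ is prime in the second sense and fix an arbitrary proper level set $U(\delta;t)$. Since $\delta$ is a fuzzy $h$-ideal, the Transfer Principle makes this non-empty level set an $h$-ideal of $R$, and being proper it is distinct from $R$. To see it is prime I would check the criterion of Corollary \ref{C5.2}: if $aRb\subseteq U(\delta;t)$, i.e.\ $\delta(axb)\ge t$ for all $x\in R$, then primeness of $\delta$ gives $\delta(a)\ge t$ or $\delta(b)\ge t$, that is, $a\in U(\delta;t)$ or $b\in U(\delta;t)$.

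For the converse, I would assume every proper level set is a prime $h$-ideal and fix $t\in[0,1]$ and $a,b\in R$ with $\delta(axb)\ge t$ for all $x\in R$, aiming to deduce $\delta(a)\ge t$ or $\delta(b)\ge t$. I would argue by cases on $U(\delta;t)$. Taking $x=0$ shows $0=a0b$, so $\delta(0)\ge t$ and $U(\delta;t)$ is non-empty (indeed $t\le\delta(0)$). If $U(\delta;t)=R$ the conclusion is immediate, as then $\delta(a)\ge t$. Otherwise $U(\delta;t)$ is a proper non-empty level set, hence a prime $h$-ideal by hypothesis; since $aRb\subseteq U(\delta;t)$, Corollary \ref{C5.2} yields $a\in U(\delta;t)$ or $b\in U(\delta;t)$, as required.

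I expect no serious obstacle: the argument is essentially bookkeeping once the correspondence between level-set membership and value inequalities is fixed. The only point needing care is the boundary behaviour in the converse, where one must separately handle the degenerate configurations in which $U(\delta;t)$ is not a proper $h$-ideal, namely the case $U(\delta;t)=R$ (settled directly) and the need to rule out emptiness (guaranteed by $0=a0b\in U(\delta;t)$), so that the primeness hypothesis is invoked exactly when it applies.
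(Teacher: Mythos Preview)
Your proposal is correct and follows essentially the same route as the paper: both directions hinge on the dictionary $aRb\subseteq U(\delta;t)\iff \delta(axb)\ge t$ for all $x$, together with Corollary~\ref{C5.2}. The only cosmetic difference is that the paper proves the converse by contrapositive and leaves the properness of the relevant level set implicit, whereas you argue directly and spell out the boundary cases $U(\delta;t)=R$ and $U(\delta;t)\ne\emptyset$.
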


\begin{proof}
Let a fuzzy $h$-ideal $\delta$ of $R$ be prime in the second sense and let $%
U(\delta;t)$ be its arbitrary proper level set, i.e., $\emptyset\ne
U(\delta;t)\ne R$. If $aRb\subseteq U(\delta;t)$, then $\delta(axb)\geq t$
for every $x\in R$. Hence $\delta(a)\geq t$ or $\delta(b)\geq t$, i.e., $%
a\in U(\delta;t)$ or $b\in U(\delta;t)$, which, by Corollary \ref{C5.2},
means that $U(\delta;t)$ is a prime $h$-ideal of $R$.

To prove the converse consider a non-constant fuzzy $h$-ideal $\delta$ of $R$. If it is not prime then there exists $a,b\in R$ such that $\delta(axb)\geq
t$ for all $x\in R$, but $\delta(a)<t$ and $\delta(b)<t$. Thus, $%
aRb\subseteq U(\delta;t)$, but $a\not\in U(\delta;t)$ and $b\not\in
U(\delta;t)$. Therefore $U(\delta;t)$ is not prime. Obtained contradiction
proves that $\delta$ should be prime.
\end{proof}

\begin{corollary}
\label{C5.7} A fuzzy set $\lambda_A$ defined in Proposition $\ref{P2.8}$ is
a prime fuzzy $h$-ideal of $R$ if and only if $A$ is a prime $h$-ideal of $R$%
.
\end{corollary}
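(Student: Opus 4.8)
The plan is to reduce the statement to Theorem \ref{T5.5}, which already characterizes primeness in the second sense through level sets, so that no fresh fuzzy-theoretic argument is needed. First I would record that $\lambda_A$ is indeed a non-constant fuzzy $h$-ideal in the relevant situation: by Proposition \ref{P2.8} the two-valued function $\lambda_A$ is a fuzzy $h$-ideal of $R$ exactly when $A$ is an $h$-ideal, and $\lambda_A$ attains the two distinct values $s<t$ precisely on the sets $R\setminus A$ and $A$. Hence $\lambda_A$ is non-constant if and only if $\emptyset\neq A\neq R$; in particular, since a prime $h$-ideal is by definition a proper nonempty $h$-ideal, $\lambda_A$ is automatically non-constant in that case.

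The key computational step is to describe the level sets $U(\lambda_A;r)=\{x\in R\mid \lambda_A(x)\ge r\}$ for each $r\in[0,1]$. Reading off the definition of $\lambda_A$, one obtains $U(\lambda_A;r)=R$ when $r\le s$, then $U(\lambda_A;r)=A$ when $s<r\le t$, and finally $U(\lambda_A;r)=\emptyset$ when $r>t$. Therefore the only \emph{proper} level set of $\lambda_A$, meaning one that is nonempty and strictly contained in $R$, is the set $A$ itself.

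It then remains to invoke Theorem \ref{T5.5}: the non-constant fuzzy $h$-ideal $\lambda_A$ is prime in the second sense if and only if every proper level set $U(\lambda_A;r)$ is a prime $h$-ideal of $R$. Since the unique such level set is $A$, this condition is equivalent to $A$ being a prime $h$-ideal, which is exactly the assertion of the corollary. I do not expect any genuine obstacle here; the only points that require care are the bookkeeping of the three ranges of $r$ in the level-set computation and the verification that $A\neq R$ so that both ``$A$ prime'' is well posed and $\lambda_A$ is genuinely non-constant, allowing Theorem \ref{T5.5} to apply.
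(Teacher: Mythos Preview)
Your proposal is correct and follows exactly the approach the paper intends: Corollary~\ref{C5.7} is stated immediately after Theorem~\ref{T5.5} without proof, and your argument---computing that the only proper level set of $\lambda_A$ is $A$ itself and then invoking Theorem~\ref{T5.5}---is precisely the implicit justification. The bookkeeping you flag (the three ranges of $r$, and that $A\neq R$ so $\lambda_A$ is non-constant) is handled correctly.
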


In view of the Transfer Principle (Lemma \ref{trans}) the second definition
of prime fuzzy $h$-ideals is better. Therefore fuzzy $h$-ideals which are
prime in the first sense will be called \textit{$h$-prime}.

\begin{proposition}
\label{P5.8} A non-constant fuzzy $h$-ideal $\delta$ of a commutative
hemiring $R$ with identity is prime if and only if $\,\delta(ab)=\delta(a)%
\vee\delta(b)$ for all $a,b\in R$.
\end{proposition}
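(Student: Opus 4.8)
The plan is to prove both implications directly from Definition \ref{D5.4}, exploiting commutativity and the presence of the identity $1$. Throughout I use the basic fuzzy-ideal inequalities $\delta(ab)\ge\delta(a)$ and $\delta(ab)\ge\delta(b)$ (item $(2)$ in the definition of fuzzy left/right ideal), which immediately give the ``$\ge$'' half of the claimed equality, namely $\delta(ab)\ge\delta(a)\vee\delta(b)$ for all $a,b\in R$. So in the forward direction the only real content is the reverse inequality, and in the backward direction the only content is verifying the primeness condition.

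For the forward direction, assume $\delta$ is prime and fix $a,b\in R$. I would set $t=\delta(ab)$ and argue that $\delta(axb)\ge t$ for every $x\in R$. This is where commutativity enters: since $R$ is commutative, $axb=x(ab)$, and the fuzzy-ideal property gives $\delta\big(x(ab)\big)\ge\delta(ab)=t$. Applying the primeness condition from Definition \ref{D5.4} with this $t$ then yields $\delta(a)\ge t$ or $\delta(b)\ge t$, i.e.\ $\delta(a)\vee\delta(b)\ge t=\delta(ab)$. Combined with the trivial inequality above, this gives $\delta(ab)=\delta(a)\vee\delta(b)$.

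For the backward direction, assume $\delta(ab)=\delta(a)\vee\delta(b)$ for all $a,b\in R$, and verify primeness directly. Take any $t\in[0,1]$ and $a,b\in R$ with $\delta(axb)\ge t$ for every $x\in R$. Here the identity does the work: specializing to $x=1$ gives $a\cdot 1\cdot b=ab$, so $\delta(ab)\ge t$, and then the hypothesis $\delta(ab)=\delta(a)\vee\delta(b)$ forces $\delta(a)\ge t$ or $\delta(b)\ge t$. This is exactly the defining condition for $\delta$ to be prime in the second sense.

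There is no serious obstacle in this argument; it is essentially a two-line computation in each direction once the right element is substituted. The only points requiring care are making sure commutativity is invoked correctly when rewriting $axb$ as $x(ab)$ in the forward direction, and that the choice $x=1$ is legitimate (it is, since $R$ has an identity) in the backward direction. One should also note that $\delta$ is assumed non-constant, which is part of the standing hypothesis that $\delta$ is a prime fuzzy $h$-ideal and need not be re-derived.
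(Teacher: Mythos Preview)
Your proof is correct and follows essentially the same approach as the paper's: in the forward direction you use commutativity to rewrite $axb$ and apply Definition~\ref{D5.4} with $t=\delta(ab)$, and in the backward direction you specialize to $x=1$; the paper does exactly this, differing only cosmetically in writing $\delta(axb)=\delta(xab)\ge\delta(x)\vee\delta(ab)$ and in spelling out the case analysis $\delta(ab)=\delta(a)$ or $\delta(ab)=\delta(b)$ rather than passing directly to $\delta(a)\vee\delta(b)\ge\delta(ab)$.
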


\begin{proof}
Let $\delta$ be a non-constant fuzzy $h$-ideal of a commutative hemiring $R$
with identity. If $\delta(ab)=t$, then, for every $x\in R$, we have $%
\delta(axb)= \delta(xab)\geq\delta(x)\vee\delta(ab)\geq t$. Thus $%
\delta(axb)\geq t$ for every $x\in R$, which implies $\delta(a)\geq t$ or $%
\delta(b)\geq t$. If $\delta(a)\geq t$, then $t=\delta(ab)\geq\delta(a)\geq
t $, whence $\delta(ab)=\delta(a)$. If $\delta(b)\geq t$, then, as in the
previous case, $\delta(ab)=\delta(b)$. So, $\delta(ab)=\delta(a)\vee%
\delta(b) $.

Conversely, assume that $\delta(ab)=\delta(a)\vee\delta(b)$ for all $a,b\in
R $. If $\delta(axb)\geq t$ for every $x\in R$, then, replacing in this
inequality $x$ by the identity of $R$, we obtain $\delta(ab)\geq t$. Thus $%
\delta(a)\vee\delta(b)\geq t$, i.e., $\delta(a)\geq t$ or $\delta(b)\geq t$,
which means that a fuzzy $h$-ideal $\delta$ is prime.
\end{proof}

\begin{theorem}
\label{T5.9} Every proper $h$-ideal is contained in some proper irreducible $%
h$-ideal.
\end{theorem}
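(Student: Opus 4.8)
The plan is to run a standard Zorn's Lemma argument: fix an element outside the given $h$-ideal and maximize among the $h$-ideals that avoid it, then read off irreducibility from maximality.

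First I would fix a proper $h$-ideal $A$ of $R$. Since $A\ne R$, there is an element $a\in R\setminus A$. I would then consider the family
\[
\Sigma=\{\,I\mid I\text{ is an }h\text{-ideal of }R,\ A\subseteq I,\ a\notin I\,\},
\]
partially ordered by inclusion. This family is non-empty, since $A$ itself belongs to it (as $a\notin A$).

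The key preliminary step is to verify that $\Sigma$ meets the hypothesis of Zorn's Lemma, namely that every chain in $\Sigma$ has an upper bound in $\Sigma$. Given a chain $\{I_j\}$ in $\Sigma$, I would take its union $I=\bigcup_j I_j$ and check that $I$ is again an $h$-ideal. For the additive and multiplicative closure, and for the $h$-closure condition (from $x+a'+w=b'+w$ with $a',b'\in I$ deduce $x\in I$), the total ordering of the chain places the finitely many relevant elements in a single member $I_j$, so each closure property is inherited from $I_j$. Since $a\notin I_j$ for every $j$, also $a\notin I$, whence $I\in\Sigma$ is an upper bound for the chain. Zorn's Lemma then yields a maximal element $P$ of $\Sigma$.

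It remains to show that $P$ is a proper irreducible $h$-ideal containing $A$. Properness is immediate, since $a\notin P$ forces $P\ne R$, and $A\subseteq P$ holds by construction. For irreducibility, suppose $P=B\cap C$ for $h$-ideals $B,C$ with $P\ne B$ and $P\ne C$. Then $P\subsetneq B$ and $P\subsetneq C$, and since $A\subseteq P\subseteq B$, the $h$-ideal $B$ strictly contains the maximal element $P$ of $\Sigma$; hence $B\notin\Sigma$, which (as $A\subseteq B$) forces $a\in B$. By the same reasoning $a\in C$, so $a\in B\cap C=P$, contradicting $a\notin P$. Therefore $P=B$ or $P=C$, so $P$ is irreducible.

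I expect the main obstacle to be the verification that the union of a chain of $h$-ideals is again an $h$-ideal: this is the one place where the $h$-closure axiom, rather than mere ideal closure, must be handled, and it relies crucially on the chain being totally ordered so that finitely many elements always lie in a common member. The irreducibility step is then a clean and short consequence of the maximality of $P$.
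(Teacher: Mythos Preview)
Your proof is correct and follows essentially the same approach as the paper: fix $a\notin A$, apply Zorn's Lemma to the family of $h$-ideals containing $A$ and avoiding $a$, and derive irreducibility of a maximal member by the $a\in B\cap C$ contradiction. The only difference is cosmetic---you spell out the chain condition (that the union of a chain of $h$-ideals is an $h$-ideal), which the paper invokes without proof.
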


\begin{proof}
Let $P$ be a proper $h$-ideal of $R$ and let $\{P_{\alpha
}\,|\,\alpha\in\Lambda\}$ be a family of all proper $h$-ideals of $R$
containing $P$. By Zorn's Lemma, for any fixed $a\notin P,$ the family of $h$%
-ideals $P_{\alpha}$ such that $P\subseteq P_{\alpha}$ and $a\notin
P_{\alpha}$ contains a maximal element $M$. This maximal element is an
irreducible $h$-ideal. Indeed, let $M=P_{\beta}\cap P_{\delta}$ for some $h$%
-ideals of $R$. If $M$ is a proper subset of $P_{\beta}$ and $P_{\delta}$,
then, according to the maximality of $M$, we have $a\in P_{\beta}$ and $a\in
P_{\delta}$. Hence $a\in P_{\beta}\cap P_{\delta}=M$, which is impossible.
Thus, either $M=P_{\beta}$ or $M=P_{\delta}$.
\end{proof}

\begin{theorem}
\label{T5.10} If all $h$-ideals of $R$ are $h$-idempotent, then an $h$-ideal
$P$ of $R$ is irreducible if and only if it is prime.
\end{theorem}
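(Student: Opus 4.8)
The plan is to prove the two implications separately, observing that one of them holds in full generality while the other is exactly where the idempotence hypothesis enters.

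First I would dispose of the direction \emph{prime $\Rightarrow$ irreducible}, which needs no hypothesis on the $h$-ideals. Suppose $P$ is prime and $A\cap B=P$ for some $h$-ideals $A,B$ of $R$. Since $A$ is a right ideal and $B$ a left ideal, the product set satisfies $AB\subseteq A$ and $AB\subseteq B$, so $AB\subseteq A\cap B=P$. Primeness then yields $A\subseteq P$ or $B\subseteq P$; combined with $P=A\cap B\subseteq A$ and $P\subseteq B$, this forces $A=P$ or $B=P$. Hence $P$ is irreducible.

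For the reverse direction \emph{irreducible $\Rightarrow$ prime} I would exploit the distributivity of the lattice $\mathcal{L}_R$ of $h$-ideals (Corollary \ref{C4.9}), which is available precisely because all $h$-ideals are $h$-idempotent. Suppose $P$ is irreducible and $AB\subseteq P$ for $h$-ideals $A,B$. By Proposition \ref{P4.1} idempotence gives $A\cap B=\overline{AB}$, and since $AB\subseteq P$ with $P$ an $h$-ideal we get $A\cap B=\overline{AB}\subseteq\overline{P}=P$. Now pass to the $h$-ideals $P\sqcup A=\overline{P+A}$ and $P\sqcup B=\overline{P+B}$, and compute their meet using distributivity: $(P\sqcup A)\sqcap(P\sqcup B)=P\sqcup(A\sqcap B)=\overline{P+(A\cap B)}$. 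Because $A\cap B\subseteq P$ we have $P+(A\cap B)=P$, so this meet is exactly $P$. Irreducibility then forces $P\sqcup A=P$ or $P\sqcup B=P$, i.e.\ $A\subseteq\overline{P+A}=P$ or $B\subseteq\overline{P+B}=P$. Thus $P$ is prime.

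The routine part is the first implication; the substantive step is recognizing that idempotence must be converted into the \emph{distributive law} $(P\sqcup A)\sqcap(P\sqcup B)=P\sqcup(A\sqcap B)$ before irreducibility can be applied. The main obstacle I anticipate is purely bookkeeping: making sure the meet and join in $\mathcal{L}_R$ are the correct operations ($A\sqcap B=A\cap B$, $A\sqcup B=\overline{A+B}$) and that the identities $\overline{P}=P$, $P+(A\cap B)=P$, and $A\cap B=\overline{AB}$ are invoked in the right places, so that the collapse of the meet to $P$ is genuinely forced and the conclusion $A\subseteq P$ or $B\subseteq P$ follows cleanly from the definition of irreducibility.
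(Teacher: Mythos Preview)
Your proposal is correct and follows essentially the same route as the paper: both use Proposition~\ref{P4.1} to get $A\cap B=\overline{AB}\subseteq P$, then invoke distributivity of $\mathcal{L}_R$ (Corollary~\ref{C4.9}) to write $P=(P\sqcup A)\sqcap(P\sqcup B)$ and apply irreducibility, while the converse direction is handled by the trivial containment $AB\subseteq A\cap B=P$. Your observation that the prime $\Rightarrow$ irreducible direction needs no idempotence hypothesis is a slight sharpening of the paper's write-up, which unnecessarily cites $\overline{AB}=A\cap B$ there as well.
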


\begin{proof}
Assume that all $h$-ideals of $R$ are $h$-idempotent. Let $P$ be a fixed
irreducible $h$-ideal. If $AB\subseteq P$ for some $h$-ideals $A$, $B$, then
$A\cap B=\overline{AB}\subseteq \overline{P}=P$, by Proposition \ref{P4.1}.
Thus $\overline{(A\cap B)+P}=P$. Since $\mathcal{L}_{R}$ is a distributive
lattice, $P=\overline{(A\cap B)+P}=\overline{(A+P)}\cap\overline{(B+P)}$. So
either $\overline{A+P}=P$ or $\overline{B+P}=P,$ that is, either $A\subseteq
P$ or $B\subseteq P.$

Conversely, if an $h$-ideal $P$ is prime and $A\cap B=P$ for some $A,B\in%
\mathcal{L}_{R}$, then $AB\subseteq\overline{AB}=A\cap B=P.$ Thus $%
A\subseteq P$ or $B\subseteq P$. But $P\subseteq A$ and $P\subseteq B$.
Hence $A=P$ or $B=P.$
\end{proof}

\begin{corollary}
\label{C5.11} In hemirings in which all $h$-ideals are $h$-idempotent each
proper $h$-ideal is contained in some proper prime $h$-ideal.
\end{corollary}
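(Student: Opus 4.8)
The plan is to recognize this as an immediate consequence of the two preceding results, Theorem \ref{T5.9} and Theorem \ref{T5.10}, so almost no new work is needed. The idea is that Theorem \ref{T5.9} already supplies, for any proper $h$-ideal, a proper \emph{irreducible} $h$-ideal containing it, while Theorem \ref{T5.10} tells us that under the standing hypothesis of the corollary---namely that every $h$-ideal of $R$ is $h$-idempotent---irreducibility and primeness coincide. Chaining these two facts converts the irreducible $h$-ideal produced by Theorem \ref{T5.9} into a prime one.

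Concretely, I would argue as follows. Let $R$ be a hemiring in which all $h$-ideals are $h$-idempotent, and let $P$ be an arbitrary proper $h$-ideal of $R$. First I apply Theorem \ref{T5.9} to obtain a proper irreducible $h$-ideal $Q$ of $R$ with $P\subseteq Q$; note that Theorem \ref{T5.9} requires no hypothesis on idempotence, so it is available unconditionally. Second, I invoke Theorem \ref{T5.10}: since every $h$-ideal of $R$ is $h$-idempotent, that theorem asserts that an $h$-ideal of $R$ is irreducible if and only if it is prime. Applying this equivalence to $Q$ shows that $Q$ is in fact prime. Hence $P\subseteq Q$ with $Q$ a proper prime $h$-ideal, which is exactly the claim.

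The only thing worth checking is that the hypotheses of both invoked theorems are genuinely met: Theorem \ref{T5.9} applies because $P$ is assumed proper, and Theorem \ref{T5.10} applies because the $h$-idempotence hypothesis of the corollary is precisely its standing assumption. I do not anticipate any real obstacle here; the statement is a corollary in the literal sense, and the proof is a one-line combination of the two quoted theorems rather than an argument requiring fresh constructions or estimates.
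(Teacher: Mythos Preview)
Your proposal is correct and matches the paper's approach exactly: the corollary is stated without proof in the paper precisely because it follows immediately by combining Theorem~\ref{T5.9} with Theorem~\ref{T5.10}, just as you describe.
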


\begin{theorem}
\label{T5.12} In hemirings in which all fuzzy $h$-ideals are idempotent a
fuzzy $h$-ideal is irreducible if and only if it is $h$-prime.
\end{theorem}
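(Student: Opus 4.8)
The plan is to mimic the proof of Theorem~\ref{T5.10}, transferring the argument from crisp $h$-ideals to fuzzy $h$-ideals by working inside the distributive lattice $\mathcal{FL}_R$ of all fuzzy $h$-ideals supplied by Theorem~\ref{f-lattice}. The key preliminary observations are that, under the standing hypothesis, Proposition~\ref{P4.3} lets me replace $\lambda\odot_h\mu$ by $\lambda\wedge\mu$ throughout; that in $\mathcal{FL}_R$ the meet is $\lambda\wedge\mu=\lambda\odot_h\mu$ and the join is the $h$-sum $\lambda+_h\mu$; and that, since the join lies above each summand, $\lambda+_h\mu=\mu$ is equivalent to $\lambda\le\mu$.

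First I would treat the easy direction: assume $\delta$ is $h$-prime and suppose $\lambda\wedge\mu=\delta$ for some fuzzy $h$-ideals $\lambda,\mu$. Idempotence gives $\lambda\odot_h\mu=\lambda\wedge\mu=\delta\le\delta$, so $h$-primeness forces $\lambda\le\delta$ or $\mu\le\delta$. Since also $\delta=\lambda\wedge\mu\le\lambda$ and $\delta\le\mu$, either inequality upgrades to an equality, whence $\lambda=\delta$ or $\mu=\delta$. Thus $\delta$ is irreducible.

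For the converse I would assume $\delta$ is irreducible and $\lambda\odot_h\mu\le\delta$. Rewriting via $\odot_h=\wedge$ this says $\lambda\wedge\mu\le\delta$, so $(\lambda\wedge\mu)+_h\delta=\delta$. Now I would invoke the distributive identity established inside the proof of Theorem~\ref{f-lattice}, namely $(\lambda\odot_h\delta)+_h\mu=(\lambda+_h\mu)\odot_h(\delta+_h\mu)$, read with $\odot_h=\wedge$; matching the variables so that it is $\delta$ that is joined to the meet gives $(\lambda\wedge\mu)+_h\delta=(\lambda+_h\delta)\wedge(\mu+_h\delta)$. Combining the last two displays yields $\delta=(\lambda+_h\delta)\wedge(\mu+_h\delta)$, and irreducibility of $\delta$ forces $\lambda+_h\delta=\delta$ or $\mu+_h\delta=\delta$, that is, $\lambda\le\delta$ or $\mu\le\delta$. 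Hence $\delta$ is $h$-prime.

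The only genuinely delicate point is the bookkeeping in this second direction: one must apply the distributive law of Theorem~\ref{f-lattice} with the roles of the variables matched so that $\delta$ (and not some other fuzzy $h$-ideal) is the term adjoined to the meet, and one must remember to pass through Proposition~\ref{P4.3} at the very start so that $\odot_h$ may be replaced by $\wedge$ everywhere. The non-constancy of $\delta$ demanded by both definitions is carried along automatically and requires no separate verification.
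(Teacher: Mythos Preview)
Your proposal is correct and follows essentially the same route as the paper's own proof: both directions use Proposition~\ref{P4.3} to identify $\odot_h$ with $\wedge$, the distributive law of Theorem~\ref{f-lattice} to expand $(\lambda\wedge\mu)+_h\delta$, and the lattice order to convert $\lambda+_h\delta=\delta$ into $\lambda\le\delta$. The only cosmetic difference is that you present the two implications in the opposite order.
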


\begin{proof}
Let all fuzzy $h$-ideals of $R$ will be idempotent and let $\delta$ be an
arbitrary irreducible fuzzy $h$-ideal of $R$. We prove that it is prime. If $%
\lambda\odot_{h}\mu\leq\delta$ for some fuzzy $h$-ideals, then also $%
\lambda\wedge\mu\leq\delta$. Since the set $\mathcal{FL}_{R}$ of all fuzzy $%
h $-ideals of $R$ is a distributive lattice (Theorem \ref{f-lattice}) we
have $\,\delta=(\lambda\wedge\mu)+_h\delta=(\lambda+_h\delta)\wedge(\mu+_h%
\delta)$. Thus $\lambda+_h\delta=\delta$ or $\mu+_h\delta=\delta$. But $\leq$
is a lattice order, so $\lambda\leq\delta$ or $\mu\leq\delta$. This proves
that a fuzzy $h$-ideal $\delta$ is $h$-prime.

Conversely, if $\delta$ is an $h$-prime fuzzy $h$-ideal of $R$ and $%
\lambda\wedge\mu=\delta$ for some $\lambda,\mu\in\mathcal{FL}_{R}$, then $%
\lambda\odot_{h}\mu=\delta$, which implies $\lambda\leq\delta$ or $%
\mu\leq\delta$. Since $\leq$ is a lattice order and $\delta=\lambda\wedge\mu$
we have also $\delta\leq\lambda$ and $\delta\leq\mu$. Thus $\lambda=\delta$
or $\mu=\delta$. So, $\delta$ is irreducible.
\end{proof}

\begin{theorem}
\label{T5.13} The following assertions for a hemiring $R$ are equivalent:

$(1)$ \ Each $h$-ideal of $R$ is $h$-idempotent.

$(2)$ \ Each proper $h$-ideal $P$ of $R$ is the intersection of all prime $h$%
-ideals containing $P$.
\end{theorem}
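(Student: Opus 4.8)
The plan is to prove the two implications separately, leaning heavily on the machinery already established. For the forward direction $(1)\Rightarrow(2)$, I would fix a proper $h$-ideal $P$ and let $\{P_\alpha\}$ denote the family of all prime $h$-ideals containing $P$. Clearly $P\subseteq\bigcap_\alpha P_\alpha$, so the entire content is the reverse inclusion $\bigcap_\alpha P_\alpha\subseteq P$. I would argue by contraposition: suppose some $a\notin P$; it suffices to produce a prime $h$-ideal containing $P$ but not $a$. Here the hypothesis $(1)$ does the real work. By Theorem \ref{T5.9}, the maximal $h$-ideal $M$ containing $P$ and avoiding $a$ (guaranteed by Zorn's Lemma, exactly as in that proof) is irreducible. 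But under hypothesis $(1)$, Theorem \ref{T5.10} tells us that an $h$-ideal is irreducible if and only if it is prime. Hence $M$ is a prime $h$-ideal with $P\subseteq M$ and $a\notin M$, so $a\notin\bigcap_\alpha P_\alpha$. This shows $\bigcap_\alpha P_\alpha\subseteq P$ and completes the forward direction.

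For the converse $(2)\Rightarrow(1)$, I would aim to show each $h$-ideal $A$ satisfies $A=\overline{AA}$. By Lemma \ref{L2.3} we always have $\overline{AA}\subseteq A$, so the task is the reverse inclusion $A\subseteq\overline{AA}$. The natural strategy is again contrapositive: assume $\overline{AA}$ is a proper $h$-ideal (if $\overline{AA}=R$ there is nothing to prove since then $A=R=\overline{AA}$) and show $A\subseteq\overline{AA}$ by testing membership against the prime $h$-ideals containing $\overline{AA}$. By hypothesis $(2)$, $\overline{AA}=\bigcap\{P\mid P\text{ prime},\ \overline{AA}\subseteq P\}$. So it is enough to check that every such prime $P$ contains $A$. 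Fix such a $P$; then $AA\subseteq\overline{AA}\subseteq P$, and since $P$ is a prime $h$-ideal with $AA\subseteq P$, primeness yields $A\subseteq P$. Intersecting over all such $P$ gives $A\subseteq\bigcap P=\overline{AA}$, hence $A=\overline{AA}$ and $A$ is $h$-idempotent.

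The step I expect to require the most care is the converse, specifically the clean handling of the case analysis on whether $\overline{AA}$ is proper, and confirming that the set of primes containing $\overline{AA}$ is nonempty so that the intersection formula from $(2)$ is actually applicable (if that set were empty, the convention would force the intersection to be $R$, and one must reconcile this with $\overline{AA}\subseteq A\subseteq R$). I would dispose of the degenerate cases first, then invoke $(2)$ only for a genuinely proper $\overline{AA}$. The primeness inference $AA\subseteq P\Rightarrow A\subseteq P$ uses the definition of prime $h$-ideal directly with the single $h$-ideal $A$ on both sides, so it is immediate. The forward direction is almost entirely a bookkeeping assembly of Theorems \ref{T5.9} and \ref{T5.10}, so the essential novelty lives in organizing the converse around the intersection representation.
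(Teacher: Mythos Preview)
Your proposal is correct and follows essentially the same route as the paper's own proof: the forward direction uses the Zorn's Lemma construction (the content of the proof of Theorem~\ref{T5.9}) to produce an irreducible $h$-ideal avoiding a given $a\notin P$, then invokes Theorem~\ref{T5.10} to conclude it is prime; the converse does the case split on whether $\overline{AA}=R$, and in the proper case writes $\overline{AA}$ as an intersection of primes and uses $AA\subseteq P\Rightarrow A\subseteq P$ to get $A\subseteq\overline{AA}$. The paper repeats the irreducibility argument inline rather than citing Theorem~\ref{T5.9}, and does not pause to discuss the nonemptiness of the prime family (your observation that hypothesis~$(2)$ itself forces nonemptiness when $\overline{AA}$ is proper is a reasonable sanity check), but these are purely presentational differences.
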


\begin{proof}
Let $P$ be a proper $h$-ideal of $R$ and let $\{P_{\alpha
}\,|\,\alpha\in\Lambda\}$ be the family of all prime $h$-ideals of $R$
containing $P$. Clearly $P\subseteq\cap_{\alpha\in\Lambda}P_{\alpha}$. By
Zorn's Lemma, for any fixed $a\notin P,$ the family of $h$-ideals $%
P_{\alpha} $ such that $P\subseteq P_{\alpha}$ and $a\notin P_{\alpha}$
contains a maximal element $M_a$. We will show that this maximal element is
an irreducible $h$-ideal. Let $M_a=K\cap L$. If $M_a$ is a proper subset of $%
K$ and $L$, then, according to the maximality of $M_a$, we have $a\in K$ and
$a\in L$. Hence $a\in K\cap L=M_a$, which is impossible. Thus, either $M_a=K$
or $M_a=L$. By Theorem \ref{T5.10}, $M_a$ is a prime $h$-ideal. So there
exists a prime $h$-ideal $M_a$ such that $a\notin M_a$ and $P\subseteq M_a$.
Hence $\cap P_{\alpha }\subseteq P.$ Thus $P=\cap P_{\alpha }$.

Assume that each $h$-ideal of $R$ is the intersection of all prime $h$%
-ideals of $R$ which contain it. Let $A$ be an $h$-ideal of $R.$ If $%
\overline{A^{2}}=R$, then, by Lemma \ref{L2.3}, we have $A=R$, which means
such $h$-ideal is $h$-idempotent. If $\overline{A^{2}}\neq R,$ then $%
\overline{A^{2}}$ is a proper $h$-ideal of $R$ and so it is the intersection
of all prime $h$-ideals of $R$ containing $A$. Let $\overline{A^{2}}=\cap
P_{\alpha }$. Then $A^{2}\subseteq P_{\alpha }$ for each $\alpha$. Since $%
P_{\alpha }$ is prime, we have $A\subseteq P_{\alpha }$. Thus $A\subseteq
\cap P_{\alpha }=\overline{A^{2}}$. But $\overline{A^{2}}\subseteq A$ for
every $h$-ideal. Hence $A=\overline{A^{2}}$.
\end{proof}

\begin{lemma}
\label{L5.14} Let $R$ be a hemiring in which each fuzzy $h$-ideal is
idempotent. If $\lambda $ is a fuzzy $h$-ideal of $R$ with $%
\lambda(a)=\alpha ,$ where $a$ is any element of $R$ and $\alpha \in \left[
0,1\right] $, then there exists an irreducible and $h$-prime fuzzy $h$-ideal
$\delta $ of $R$ such that $\lambda \leq \delta $ and $\delta(a)=\alpha$.
\end{lemma}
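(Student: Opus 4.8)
The strategy is a standard Zorn's-Lemma maximization argument, adapted to the fuzzy setting. Fix $a \in R$ and a fuzzy $h$-ideal $\lambda$ with $\lambda(a) = \alpha$. Consider the family $\mathcal{F}$ of all fuzzy $h$-ideals $\delta$ of $R$ satisfying $\lambda \leq \delta$ and $\delta(a) = \alpha$. This family is nonempty since $\lambda$ itself belongs to it, and it is partially ordered by $\leq$. First I would check that every chain in $\mathcal{F}$ has an upper bound: given a chain $\{\delta_i\}$, its pointwise supremum $\delta^* = \bigvee_i \delta_i$ should again be a fuzzy $h$-ideal (one verifies the defining inequalities survive under suprema of a chain), it clearly satisfies $\lambda \leq \delta^*$, and $\delta^*(a) = \sup_i \delta_i(a) = \alpha$ since every member takes value $\alpha$ at $a$. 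Hence by Zorn's Lemma $\mathcal{F}$ contains a maximal element $\delta$.

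The heart of the argument is to show this maximal $\delta$ is \emph{irreducible}. Suppose $\delta = \mu_1 \wedge \mu_2$ for fuzzy $h$-ideals $\mu_1, \mu_2$. Then $\delta \leq \mu_1$ and $\delta \leq \mu_2$, so in particular $\mu_1(a) \geq \delta(a) = \alpha$ and similarly $\mu_2(a) \geq \alpha$; moreover $(\mu_1 \wedge \mu_2)(a) = \alpha$ forces $\min\{\mu_1(a), \mu_2(a)\} = \alpha$, whence at least one of them, say $\mu_1$, satisfies $\mu_1(a) = \alpha$. Also $\lambda \leq \delta \leq \mu_1$. Therefore $\mu_1 \in \mathcal{F}$, and $\delta \leq \mu_1$ combined with maximality of $\delta$ gives $\delta = \mu_1$. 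This establishes irreducibility. I expect this step to be the main obstacle, precisely in the bookkeeping: one must argue carefully that $\min\{\mu_1(a),\mu_2(a)\} = \alpha$ together with both values being $\geq \alpha$ pins at least one value to exactly $\alpha$, so that the correct factor lands in $\mathcal{F}$ and the maximality of $\delta$ can be invoked.

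Finally, $h$-primeness follows for free from the hypothesis. Since every fuzzy $h$-ideal of $R$ is idempotent, Theorem \ref{T5.12} applies, giving that a fuzzy $h$-ideal is irreducible if and only if it is $h$-prime. Thus the irreducible $\delta$ just constructed is automatically $h$-prime. Collecting the pieces, $\delta$ is an irreducible and $h$-prime fuzzy $h$-ideal with $\lambda \leq \delta$ and $\delta(a) = \alpha$, as required. The only genuinely technical verification is the chain-closure claim for the supremum, which is routine, and the extraction of the right factor in the irreducibility step, which is where the precise value $\alpha$ (rather than merely an inequality) does the work.
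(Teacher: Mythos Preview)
Your proposal is correct and follows essentially the same approach as the paper: define the family of fuzzy $h$-ideals $\mu$ with $\lambda\leq\mu$ and $\mu(a)=\alpha$, apply Zorn's Lemma after checking that the pointwise supremum of a chain remains a fuzzy $h$-ideal in the family, and then show the maximal element is irreducible and hence $h$-prime via Theorem~\ref{T5.12}. The only cosmetic difference is in the irreducibility step: the paper argues by contradiction (assuming $\delta\neq\delta_1$ and $\delta\neq\delta_2$ forces $\delta_1(a)\neq\alpha$ and $\delta_2(a)\neq\alpha$, contradicting $\delta_1(a)\wedge\delta_2(a)=\alpha$), whereas you argue directly that one factor must have value exactly $\alpha$ at $a$ and so lies in the family, forcing equality by maximality---these are the same argument in contrapositive form.
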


\begin{proof}
Let $\lambda$ be an arbitrary fuzzy $h$-ideal of $R$ and let $a\in R$ be
fixed. Consider the following collection of fuzzy $h$-ideals of $R$
\begin{equation*}
\mathcal{B}=\{\mu\,|\,\mu(a)=\lambda(a), \ \lambda\leq\mu\} .
\end{equation*}
$\mathcal{B}$ \ is non-empty since $\lambda\in\mathcal{B}$. Let $\mathcal{F}$
be a totally ordered subset of $\mathcal{B}$ containing $\lambda$, say $%
\mathcal{F}=\{\lambda _{i}\,|\,i\in I\}$. Obviously $\lambda_i\vee\lambda_j%
\in \mathcal{F}$ for any $\lambda_i,\lambda_j\in\mathcal{F}$. So, for
example, $\big(\lambda_{i}(x)\vee\lambda_{j}(x)\big)\wedge \big(%
\lambda_{i}(y)\vee\lambda_{j}(y)\big)\leq
\lambda_{i}(x+y)\vee\lambda_{j}(x+y)$ for any $\lambda_i,\lambda_j\in%
\mathcal{F}$ and $x,y\in R$.

We claim that $\bigvee\limits_{i\in I}\lambda _{i}$ is a fuzzy $h$-ideal of $%
R$.

For any $x,y\in R,$ we have
\begin{eqnarray*}
\big(\bigvee\limits_{i\in I}\lambda_{i}\big)(x)\wedge \big(%
\bigvee\limits_{i\in I}\lambda_{i}\big)(y) &=&\big(\bigvee\limits_{i\in
I}\lambda_{i}(x)\big)\wedge \big(\bigvee\limits_{j\in I}\lambda_{j}(y)\big)
\\
&=&\bigvee\limits_{i,j\in I}\big(\lambda_{i}(x)\wedge\lambda _{j}(y)\big) \\
&\leq &\bigvee\limits_{i,j\in I}\Big( \big(\lambda_{i}(x)\vee\lambda_{j}(x)%
\big)\wedge \big(\lambda_{i}(y)\vee\lambda_{j}(y)\big)\Big) \\
&\leq &\bigvee\limits_{i,j\in I }\big(\lambda_{i}(x+y)\vee\lambda_{j}(x+y)%
\big) \\
&\leq &\bigvee\limits_{i\in I}\lambda_{i}(x+y)=\big(\bigvee\limits_{i\in
I}\lambda_{i}\big)(x+y)\, .
\end{eqnarray*}
Similarly
\begin{equation*}
\big(\bigvee\limits_{i\in I}\lambda_{i}\big)(x) =\bigvee\limits_{i\in
I}\lambda_{i}(x)\leq \bigvee\limits_{i\in I}\lambda_{i}(xr) =\big(%
\bigvee\limits_{i\in I}\lambda _{i}\big)(xr)
\end{equation*}
and
\begin{equation*}
\big(\bigvee\limits_{i\in I}\lambda_{i}\big)(x)\leq \big(\bigvee\limits_{i%
\in I}\lambda_{i}\big)(rx)
\end{equation*}
for all $x,r\in R.$ Thus $\bigvee\limits_{i\in I}$ is a fuzzy ideal.

Now, let $x+a+z=b+z$, where $a,b,z\in R.$ Then
\begin{eqnarray*}
\big(\bigvee\limits_{i\in I}\lambda _{i}\big)(a)\wedge \big(%
\bigvee\limits_{i\in I}\lambda _{i}\big)(b) &=&\big(\bigvee\limits_{i\in
I}\lambda _{i}(a)\big)\wedge \big(\bigvee\limits_{j\in I}(\lambda _{j}(b)%
\big) \\
&=&\bigvee\limits_{i,j\in I}\big(\lambda _{i}(a)\wedge \lambda _{j}(b)\big)
\\
&\leq &\bigvee\limits_{i,j\in I}\Big(\big(\lambda _{i}(a)\vee \lambda _{j}(a)%
\big)\wedge \big(\lambda _{i}(b)\vee \lambda _{j}(b)\big)\Big) \\
&\leq &\bigvee\limits_{i,j}\big(\lambda _{i}(x)\vee \lambda _{j}(x)\big)\leq
\bigvee\limits_{i\in i}\lambda _{i}(x)=\big(\bigvee\limits_{i\in I}\lambda
_{i}\big)(x)\,.
\end{eqnarray*}%
This means that $\bigvee\limits_{i\in I}\lambda _{i}$ is a fuzzy $h$-ideal
of $R$. Clearly $\lambda \leq \bigvee\limits_{i\in I}\lambda _{i}$ and $%
(\bigvee\limits_{i\in I}\lambda _{i})(a)=\lambda (a)=\alpha $. Thus $%
\bigvee\limits_{i\in I}\lambda _{i}$ is the least upper bound of $\mathcal{F}
$. Hence by Zorn's lemma there exists a fuzzy $h$-ideal $\delta $ of $R$
which is maximal with respect to the property that $\lambda \leq \delta $
and $\delta (a)=\alpha $.

We will show that $\delta $ is an irreducible fuzzy $h$-ideal of $R$. Let $%
\delta=\delta_{1}\wedge\delta_{2},$ where $\delta_{1}$, $\delta_{2}$ are
fuzzy $h$-ideals of $R$. Then $\delta\leq\delta_{1}$ and $%
\delta\leq\delta_{2}$ since $\mathcal{FL}_R$ is a lattice. We claim that
either $\delta=\delta_{1}$ or $\delta=\delta_{2}$. Suppose $%
\delta\neq\delta_{1}$ and $\delta\neq\delta_{2}$. Since $\delta $ is maximal
with respect to the property that $\delta(a)=\alpha$ and since $%
\delta\lneqq\delta_{1}$ and $\delta\lneqq\delta_{2}$, so $%
\delta_{1}(a)\neq\alpha$ and $\delta_{2}(a)\neq\alpha$. Hence $%
\alpha=\delta(a)=(\delta_{1}\wedge\delta_{2})(a)=
\delta_{1}(a)\wedge\delta_{2}(a)\neq\alpha$, which is impossible. Hence $%
\delta=\delta_{1}$ or $\delta=\delta_{2}$. Thus $\delta$ is an irreducible
fuzzy $h$-ideal of $R$. By Theorem \ref{T5.12}, it is also prime.
\end{proof}

\begin{theorem}
\label{T5.15} Each fuzzy $h$-ideal of $R$ is idempotent if and
only if each fuzzy $h$-ideal of $R$ is the intersection of those
$h$-prime fuzzy $h$-ideals of $R$ which contain it.
\end{theorem}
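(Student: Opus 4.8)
The plan is to prove both implications separately, with the forward direction resting on the separation Lemma \ref{L5.14} and the converse mirroring the purely $h$-ideal argument of Theorem \ref{T5.13}.

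For the forward direction I assume that every fuzzy $h$-ideal of $R$ is idempotent and fix a fuzzy $h$-ideal $\lambda$. Let $\mathcal{P}$ denote the family of all $h$-prime fuzzy $h$-ideals $\delta$ of $R$ with $\lambda\le\delta$, and set $\nu=\bigwedge_{\delta\in\mathcal{P}}\delta$. Since $\lambda\le\delta$ for each $\delta\in\mathcal{P}$, the inequality $\lambda\le\nu$ is immediate, so the only thing to establish is the reverse $\nu\le\lambda$, i.e. $\nu(a)\le\lambda(a)$ for every $a\in R$. I would argue pointwise: fix $a\in R$ and put $\alpha=\lambda(a)$. Because every fuzzy $h$-ideal is idempotent, Lemma \ref{L5.14} supplies an irreducible $h$-prime fuzzy $h$-ideal $\delta_0$ with $\lambda\le\delta_0$ and $\delta_0(a)=\alpha$. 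This $\delta_0$ belongs to $\mathcal{P}$, hence $\nu(a)\le\delta_0(a)=\alpha=\lambda(a)$. As $a$ is arbitrary, $\nu\le\lambda$, so $\lambda=\nu$, exhibiting $\lambda$ as the intersection of the $h$-prime fuzzy $h$-ideals containing it.

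For the converse I assume that every fuzzy $h$-ideal of $R$ is the intersection of those $h$-prime fuzzy $h$-ideals which contain it, and let $\lambda$ be an arbitrary fuzzy $h$-ideal. By Theorem \ref{T3.3} the product $\lambda\odot_h\lambda$ is again a fuzzy $h$-ideal and satisfies $\lambda\odot_h\lambda\le\lambda$, so it suffices to prove $\lambda\le\lambda\odot_h\lambda$. Applying the hypothesis to $\lambda\odot_h\lambda$, I write $\{\delta_i\}_{i\in I}$ for the $h$-prime fuzzy $h$-ideals with $\lambda\odot_h\lambda\le\delta_i$, so that $\lambda\odot_h\lambda=\bigwedge_{i\in I}\delta_i$. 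For each $i$, the inequality $\lambda\odot_h\lambda\le\delta_i$ together with the $h$-primeness of $\delta_i$ (primeness in the first sense, taking both factors equal to $\lambda$) forces $\lambda\le\delta_i$. Taking the infimum over $i$ gives $\lambda\le\bigwedge_{i\in I}\delta_i=\lambda\odot_h\lambda$, whence $\lambda\odot_h\lambda=\lambda$; that is, $\lambda$ is idempotent.

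The routine ingredients are the two trivial inequalities $\lambda\le\nu$ and $\lambda\odot_h\lambda\le\lambda$, which follow at once from the definitions and Theorem \ref{T3.3}. The genuinely delicate point is the forward direction's appeal to Lemma \ref{L5.14}: everything hinges on being able to produce, for a prescribed value $\alpha=\lambda(a)$, an $h$-prime fuzzy $h$-ideal that both dominates $\lambda$ and still takes the value $\alpha$ at $a$. I would also note the degenerate possibility that the relevant family of $h$-prime fuzzy $h$-ideals is empty; in the converse this makes the intersection the constant fuzzy ideal $1$, which is harmless (it forces $\lambda\odot_h\lambda=1$ and hence $\lambda=1$), while in the forward direction it cannot occur precisely because Lemma \ref{L5.14} guarantees at least one member of $\mathcal{P}$.
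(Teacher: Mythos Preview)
Your proof is correct and follows essentially the same route as the paper: the forward direction is the pointwise argument via Lemma \ref{L5.14}, and the converse applies the hypothesis to $\lambda\odot_h\lambda$ and uses $h$-primeness with both factors equal to $\lambda$. Your extra remarks on the degenerate empty-family case are a nice touch that the paper omits.
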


\begin{proof}
Suppose each fuzzy $h$-ideal of $R$ is idempotent. Let $\lambda $ be a fuzzy
$h$-ideal of $R$ and let $\{\lambda _{\alpha }\,|\,\alpha \in \Lambda \}$ be
the family of all $h$-prime fuzzy $h$-ideals of $R$ which contain $\lambda $%
. Obviously $\lambda \leq \bigwedge\limits_{\alpha \in \Lambda }\lambda
_{\alpha }$. We now show that $\bigwedge\limits_{\alpha \in \Lambda }\lambda
_{\alpha }\leq \lambda $. Let $a$ be an arbitrary element of $R$. Then,
according to Lemma \ref{L5.14}, there exists an irreducible and $h$-prime
fuzzy $h$-ideal $\delta $ such that $\lambda \leq \delta $ and $\lambda
(a)=\delta (a)$. Hence $\delta \in \{\lambda _{\alpha }\,|\,\alpha \in
\Lambda \}$ and $\bigwedge\limits_{\alpha \in \Lambda }\lambda _{\alpha
}\leq \delta $. So, $\bigwedge\limits_{\alpha \in \Lambda }\lambda _{\alpha
}(a)\leq \delta (a)=\lambda (a)$. Thus $\bigwedge\limits_{\alpha \in \Lambda
}\lambda _{\alpha }\leq \lambda $. Therefore $\bigwedge\limits_{\alpha \in
\Lambda }\lambda _{\alpha }=\lambda $.

Conversely, assume that each fuzzy $h$-ideal of $R$ is the
intersection of those $h$-prime fuzzy $h$-ideals of $R$ which
contain it. Let $\lambda $ be a fuzzy $h$-ideal of $R$ then
$\lambda \odot \lambda $ is also fuzzy $h$-ideal of $R$, so
$\lambda \odot \lambda =\bigwedge\limits_{\alpha \in \Lambda
}\lambda_{\alpha }$ where $\lambda _{\alpha }$ are $h$-prime fuzzy
$h$-ideals of $R$. Thus each $\lambda _{\alpha }$ contains
$\lambda \odot \lambda $, and hence $\lambda $. So $\lambda
\subseteq $ $\bigwedge\limits_{\alpha \in \Lambda }\lambda
_{\alpha }=\lambda \odot \lambda $, but $\lambda \odot \lambda
\subseteq \lambda $ always. Hence $\lambda=\lambda\odot\lambda .$
\end{proof}

%%%%%%%%%%%%%%%%%%%%%%%%%%%%%%%%%%%%%%%%%%%%%%%%%%%%%%%%%%%%%%%%%%%%%%%%%%%

\section{Semiprime ideals}

%%%%%%%%%%%%%%%%%%%%%%%%%%%%%%%%%%%%%%%%%%%%%%%%%%%%%%%%%%%%%%%%%%%%%%%%%%%

\begin{definition}
An $h$-ideal $A$ of $R$ is called \textit{semiprime} if $A\ne R$ and for any
$h$-ideal $B$ of $R$, $B^{2}\subseteq A$ implies $B\subseteq A$. Similarly,
a non-constant fuzzy $h$-ideal $\lambda $ of $R$ is called \textit{semiprime}
if for any fuzzy $h$-ideal $\delta $ of $R,$ \ $\delta\odot_{h}\delta\leq%
\lambda$ implies $\delta\leq\lambda$.
\end{definition}

Obviously, each semiprime $h$-ideal is prime. Each semiprime fuzzy $h$-ideal
is $h$-prime. The converse is not true (see Example \ref{Ex6.7}).

Using the same method as in the proof of Theorem \ref{T5.1} we can prove

\begin{theorem}
\label{T6.2} A $($left, right$)$ $h$-ideal $P$ of $R$ is semiprime if and
only if for every $a\in R$ from $aRa\subseteq P$ it follows $a\in P.$
\end{theorem}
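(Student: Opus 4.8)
The plan is to imitate the proof of Theorem~\ref{T5.1} in structure, handling the left $h$-ideal case in detail (the right case being completely symmetric). As there, ``semiprime'' for a left $h$-ideal $P$ should be read as: $P\ne R$ and, for every left $h$-ideal $B$ of $R$, $B^{2}\subseteq P$ implies $B\subseteq P$.

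For the forward implication I would fix $a\in R$ with $aRa\subseteq P$ and set $A=\overline{Ra}$, which is a left $h$-ideal of $R$ (the $h$-closure of the left ideal $Ra$). The key computation is $AA\subseteq\overline{AA}=\overline{\overline{Ra}\,\overline{Ra}}=\overline{RaRa}$ by Lemma~\ref{L2.2}; since $P$ is a left ideal, every element $\sum_i r_i a s_i a=\sum_i r_i(a s_i a)$ of $RaRa$ lies in $RP\subseteq P$, so $RaRa\subseteq P$ and hence $A^{2}\subseteq\overline{RaRa}\subseteq\overline{P}=P$. Semiprimeness then gives $A\subseteq P$, and I conclude $a\in P$ by the same final step as in Theorem~\ref{T5.1}, passing through the left $h$-ideal $\langle a\rangle$ generated by $a$ with $\langle a\rangle\subseteq\overline{Ra}=A\subseteq P$. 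I expect this direction to be the main obstacle, concentrated in two places: obtaining $A^{2}\subseteq P$ correctly through Lemma~\ref{L2.2} together with the left-ideal absorption $R(aRa)\subseteq P$, and the passage from $\overline{Ra}\subseteq P$ back to the single element $a$, which must be justified exactly as in Theorem~\ref{T5.1}.

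The converse I expect to be routine, being the analogue of the ``obvious'' converse in Theorem~\ref{T5.1}. Assuming the stated condition, I take any left $h$-ideal $B$ with $B^{2}\subseteq P$ and any $a\in B$. Since $B$ is a left ideal, $Ra\subseteq B$, so $aRa=a(Ra)\subseteq BB=B^{2}\subseteq P$; the hypothesis then forces $a\in P$, and as $a\in B$ is arbitrary we get $B\subseteq P$, i.e.\ $P$ is semiprime. The right $h$-ideal case is obtained by the mirror-image argument with $A=\overline{aR}$ and $aR\subseteq B$ replacing their left-hand counterparts.
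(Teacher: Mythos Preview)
Your proposal is correct and follows essentially the same approach as the paper, which simply refers the reader back to the method of Theorem~\ref{T5.1}. Your forward implication with $A=\overline{Ra}$, the computation $A^{2}\subseteq\overline{RaRa}\subseteq\overline{RP}\subseteq P$, and the passage to $a\in\langle a\rangle\subseteq\overline{Ra}\subseteq P$ mirror that proof exactly, and your explicit converse (which the paper leaves as ``obvious'') is fine.
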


\begin{corollary}
\label{C6.3} An $h$-ideal $P$ of a commutative hemiring $R$ with identity is
semiprime if and only if for all $a\in R$ from $a^2\in P$ it follows $%
a\in P$.
\end{corollary}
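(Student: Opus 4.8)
The plan is to deduce this corollary directly from Theorem~\ref{T6.2}, which already characterizes semiprimeness of an $h$-ideal $P$ by the condition that $aRa\subseteq P$ forces $a\in P$. Since Theorem~\ref{T6.2} holds in an arbitrary hemiring, the only remaining work is to show that, under the standing hypotheses (commutativity and the presence of an identity), the condition $aRa\subseteq P$ is interchangeable with the much simpler condition $a^{2}\in P$. Once that interchange is justified, both implications of the corollary follow by merely rewriting the criterion of Theorem~\ref{T6.2}.

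First I would establish the key equivalence $aRa\subseteq P \Longleftrightarrow a^{2}\in P$ for every $a\in R$. For the forward implication, assuming $aRa\subseteq P$, I specialize $x$ to the identity $1\in R$ and obtain $a\cdot 1\cdot a=a^{2}\in P$. For the reverse implication, assuming $a^{2}\in P$, I take an arbitrary $x\in R$ and use commutativity to rewrite $axa=a^{2}x$; since $P$ is a (two-sided) $h$-ideal and hence absorbs multiplication by elements of $R$, this gives $a^{2}x\in P$, so $axa\in P$ for every $x$, i.e. $aRa\subseteq P$. Note that the identity is exactly what powers the forward direction and commutativity exactly what powers the reverse, so both hypotheses are genuinely used here and nowhere else.

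With this equivalence in hand, the two directions of the corollary are immediate. If $P$ is semiprime, then by Theorem~\ref{T6.2} any $a$ with $aRa\subseteq P$ lies in $P$; given $a^{2}\in P$, the equivalence supplies $aRa\subseteq P$ and hence $a\in P$. Conversely, if $a^{2}\in P$ always forces $a\in P$, then whenever $aRa\subseteq P$ the equivalence yields $a^{2}\in P$ and therefore $a\in P$, so by Theorem~\ref{T6.2} the $h$-ideal $P$ is semiprime. I expect no real obstacle in this argument: the corollary is a routine specialization of Theorem~\ref{T6.2}, and the single point deserving mild care is to keep track that $P$ is two-sided so that $a^{2}x$ remains in $P$ in the reverse half of the equivalence.
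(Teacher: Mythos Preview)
Your proposal is correct and follows exactly the intended route: the paper states Corollary~\ref{C6.3} without proof as an immediate consequence of Theorem~\ref{T6.2}, and your argument spells out precisely the straightforward reduction one has in mind, using the identity to pass from $aRa\subseteq P$ to $a^{2}\in P$ and commutativity together with the ideal property for the reverse.
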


\begin{theorem}
\label{T6.4} The following assertions for a hemiring $R$ are equivalent:

$(1)$ Each $h$-ideal of $R$ is $h$-idempotent.

$(2)$ Each $h$-ideal of $R$ is semiprime.
\end{theorem}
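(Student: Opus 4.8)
The plan is to run both implications off a single engine: for any $h$-ideal $A$ one always has $\overline{AA}\subseteq A$ (apply Lemma \ref{L2.3} with both factors equal to $A$, since an ideal is simultaneously a left and a right $h$-ideal), so the whole content of $h$-idempotence is the reverse inclusion $A\subseteq\overline{AA}=\overline{A^{2}}$. Throughout I would also use that $\overline{A}=A$ for an $h$-ideal $A$, that $h$-closure is monotone (i.e.\ $\overline{X}\subseteq\overline{Y}$ whenever $X\subseteq Y$), and that $\overline{\overline{X}}=\overline{X}$.

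For $(1)\Rightarrow(2)$ I would take a proper $h$-ideal $A$ and any $h$-ideal $B$ with $B^{2}\subseteq A$, and show $B\subseteq A$. The decisive move is to invoke $h$-idempotence of $B$: by $(1)$, $B=\overline{BB}=\overline{B^{2}}$. Since $B^{2}\subseteq A$, monotonicity of the $h$-closure together with $\overline{A}=A$ yields $B=\overline{B^{2}}\subseteq\overline{A}=A$. Hence $A$ is semiprime, and this direction is essentially immediate.

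For $(2)\Rightarrow(1)$ the one idea that is not automatic is to apply the semiprime hypothesis not to $A$ itself but to the $h$-ideal $\overline{A^{2}}$. Fix an $h$-ideal $A$; since $A^{2}=AA$ is an ideal, $\overline{A^{2}}$ is an $h$-ideal. If $\overline{A^{2}}=R$, then the always-valid inclusion $\overline{A^{2}}\subseteq A$ (Lemma \ref{L2.3}) forces $A=R$, which is trivially $h$-idempotent. Otherwise $\overline{A^{2}}$ is a proper $h$-ideal and hence semiprime by $(2)$. Applying the semiprime condition with $B=A$ to the obvious inclusion $A^{2}\subseteq\overline{A^{2}}$ gives $A\subseteq\overline{A^{2}}$; combined with $\overline{A^{2}}\subseteq A$ this yields $\overline{AA}=\overline{A^{2}}=A$, i.e.\ $A$ is $h$-idempotent.

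I do not expect a serious obstacle: once the correct target ideal $\overline{A^{2}}$ is chosen, both directions reduce to one-line consequences of Lemma \ref{L2.3}, monotonicity of the $h$-closure, and the identity $\overline{B}=\overline{B^{2}}$ supplied by idempotence. The only point needing care is the degenerate case $\overline{A^{2}}=R$, where the definition of semiprimeness (which excludes $R$) does not apply and must be dispatched separately via Lemma \ref{L2.3}; this mirrors the bookkeeping already used in the proof of Theorem \ref{T5.13}.
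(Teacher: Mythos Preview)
Your proof is correct and follows essentially the same route as the paper: for $(1)\Rightarrow(2)$ use $B=\overline{B^{2}}\subseteq\overline{A}=A$, and for $(2)\Rightarrow(1)$ apply the semiprime property of $\overline{A^{2}}$ to the inclusion $A^{2}\subseteq\overline{A^{2}}$. Your explicit handling of the degenerate case $\overline{A^{2}}=R$ (where the semiprime definition, which requires properness, does not literally apply) is a small improvement over the paper, which glosses over this point.
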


\begin{proof}
Suppose that each $h$-ideal of $R$ is idempotent. Let $A$, $B$ be $h$-ideals
of $R$ such that $B^{2}\subseteq A.$ Thus $\overline{B^{2}}\subseteq
\overline{A}=A. $ By hypothesis $B=\overline{B^{2}},$ so $B\subseteq A.$
Hence $A$ is semiprime.

Conversely, assume that each $h$-ideal of $R$ is semiprime. Let $A$ be an $h$%
-ideal of $R.$ Then $\overline{A^{2}}$ is also an $h$-ideal of $R.$ Also $%
A^{2}\subseteq\overline{A^{2}}.$ Hence by hypothesis $A\subseteq\overline{%
A^{2}}.$ But $\overline{A^{2}}\subseteq A$ always. Hence $A=\overline{A^{2}}$%
.
\end{proof}

\begin{theorem}
\label{T6.5} Each fuzzy $h$-ideal of $R$ is idempotent if and only if each
fuzzy $h$-ideal of $R$ is semiprime.
\end{theorem}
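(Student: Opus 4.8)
The plan is to prove Theorem~\ref{T6.5} by establishing the two implications separately, mirroring the structure of Theorem~\ref{T6.4} but in the fuzzy setting. Recall that a fuzzy $h$-ideal $\lambda$ is \emph{idempotent} means $\lambda\odot_{h}\lambda=\lambda$, and \emph{semiprime} means that $\delta\odot_{h}\delta\leq\lambda$ forces $\delta\leq\lambda$ for every fuzzy $h$-ideal $\delta$. Throughout I will lean on Theorem~\ref{T3.3}, which guarantees both that $\lambda\odot_{h}\mu$ is again a fuzzy $h$-ideal and that $\lambda\odot_{h}\mu\leq\lambda\wedge\mu$; the latter inequality specializes to $\delta\odot_{h}\delta\leq\delta$ for any fuzzy $h$-ideal $\delta$, which is the workhorse for both directions.

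For the forward implication, suppose every fuzzy $h$-ideal of $R$ is idempotent, and let $\lambda$ be a fixed fuzzy $h$-ideal. To show $\lambda$ is semiprime, take any fuzzy $h$-ideal $\delta$ with $\delta\odot_{h}\delta\leq\lambda$. Since $\delta$ is idempotent we have $\delta=\delta\odot_{h}\delta\leq\lambda$, which is exactly the semiprimeness condition. This direction is essentially immediate and needs no computation beyond invoking the hypothesis.

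For the converse, suppose every fuzzy $h$-ideal of $R$ is semiprime, and let $\lambda$ be an arbitrary fuzzy $h$-ideal. The aim is $\lambda\odot_{h}\lambda=\lambda$. By Theorem~\ref{T3.3} the product $\lambda\odot_{h}\lambda$ is itself a fuzzy $h$-ideal of $R$, and we always have $\lambda\odot_{h}\lambda\leq\lambda$. To get the reverse inequality, I would apply the semiprimeness hypothesis to the fuzzy $h$-ideal $\mu:=\lambda\odot_{h}\lambda$: it suffices to produce a fuzzy $h$-ideal $\delta$ with $\delta\odot_{h}\delta\leq\mu$ and $\lambda\leq\delta$, since then semiprimeness of $\mu$ yields $\delta\leq\mu$ and hence $\lambda\leq\mu=\lambda\odot_{h}\lambda$. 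The natural choice is $\delta=\lambda$ itself, for which $\delta\odot_{h}\delta=\lambda\odot_{h}\lambda=\mu\leq\mu$ holds trivially; then semiprimeness of $\mu$ gives $\lambda=\delta\leq\mu=\lambda\odot_{h}\lambda$. Combining the two inequalities gives $\lambda\odot_{h}\lambda=\lambda$.

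The argument is short and the main conceptual point, rather than any calculational obstacle, is selecting the right fuzzy $h$-ideal to which semiprimeness is applied in the converse direction: one applies the definition with $\lambda$ itself playing the role of $\delta$ and with $\lambda\odot_{h}\lambda$ playing the role of the ambient semiprime ideal. No level-set or Transfer Principle machinery is required here, in contrast to the prime-ideal theorems; the entire proof rests on the two basic facts from Theorem~\ref{T3.3} together with the definitions, so I expect it to be comparable in length and flavor to the proof of Theorem~\ref{T6.4}.
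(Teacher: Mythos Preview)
Your proof is correct and follows essentially the same approach as the paper's: in both directions you use Theorem~\ref{T3.3} together with the definitions, applying idempotence of $\delta$ for the forward implication and semiprimeness of $\lambda\odot_{h}\lambda$ with $\delta=\lambda$ for the converse. The paper presents the argument more tersely (it calls the forward implication ``obvious'' and writes the converse as the single line $\lambda\odot_{h}\lambda\leq\lambda\odot_{h}\lambda\Rightarrow\lambda\leq\lambda\odot_{h}\lambda$), but the content is identical.
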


\begin{proof}
For any $h$-ideal of $R$ we have $\lambda\odot_{h}\lambda\leq\lambda$
(Theorem \ref{T3.3}). If each $h$-ideal of $R$ is semiprime, then $%
\lambda\odot_{h}\lambda\leq\lambda\odot_{h}\lambda$ implies $%
\lambda\leq\lambda\odot_{h}\lambda$. Hence $\lambda\odot_{h}\lambda=\lambda$.

The converse is obvious.
\end{proof}

Below we present two examples of hemirings in which all fuzzy $h$-ideals are
semiprime.

\begin{example}
\label{Ex6.6} Consider the set $R=\{0,a,1\}$ with the following two
operations:
\begin{equation*}
\begin{tabular}{c|ccc}
$+$ & $0$ & $a$ & $1$ \\ \hline
$0$ & $0$ & $a$ & $1$ \\
$a$ & $a$ & $a$ & $a$ \\
$1$ & $1$ & $a$ & $1$%
\end{tabular}
\ \ \ \ \ \ \ \ \ \ \ \ \ \ \ \ \ \ \ \
\begin{tabular}{c|ccc}
$\cdot $ & $0$ & $a$ & $1$ \\ \hline
$0$ & $0$ & $0$ & $0$ \\
$a$ & $0$ & $a$ & $a$ \\
$1$ & $0$ & $a$ & $1$%
\end{tabular}%
\end{equation*}

Then $(R,+,\cdot)$ is a commutative hemiring with identity. It has only one
proper ideal $\{0,a\}$. This ideal is not an $h$-ideal. The only $h$-ideal
of $R$ is $\{0,a,1\}$, which is clearly $h$-idempotent.

Since $0=0a=a0=01=10,$ for any fuzzy ideal $\lambda$ of this hemiring we
have $\lambda(0)\geq\lambda(a)$ and $\lambda(0)\geq\lambda(1)$ and $%
\lambda(a)=\lambda(1a)\geq\lambda(1)$. Thus $\lambda(0)\geq\lambda(a)\geq%
\lambda(1)$. If $\lambda$ is a fuzzy $h$-ideal, then $1+0+1=0+1$ implies $%
\lambda(1)\geq\lambda(0)\wedge\lambda(0)=\lambda(0)$, which proves that each
fuzzy $h$-ideal of this hemiring is a constant function. So, $%
\lambda\odot_{h}\lambda=\lambda$ for each fuzzy $h$-ideal $\lambda$ of $R$.
This, by Theorem \ref{T6.5}, means that each fuzzy $h$-ideal of $R$ is
semiprime.
\end{example}

\begin{example}
\label{Ex6.7} Now, consider the hemiring $R=\{0,a,b,c\}$ defined by the
following tables:
\begin{equation*}
\begin{tabular}{c|cccc}
$+$ & $0$ & $a$ & $b$ & $c$ \\ \hline
$0$ & $0$ & $a$ & $b$ & $c$ \\
$a$ & $a$ & $b$ & $c$ & $a$ \\
$b$ & $b$ & $c$ & $a$ & $b$ \\
$c$ & $c$ & $a$ & $b$ & $c$%
\end{tabular}
\ \ \ \ \ \ \ \ \ \
\begin{tabular}{c|cccc}
$\cdot $ & $0$ & $a$ & $b$ & $c$ \\ \hline
$0$ & $0$ & $0$ & $0$ & $0$ \\
$a$ & $0$ & $a$ & $b$ & $c$ \\
$b$ & $0$ & $b$ & $b$ & $c$ \\
$c$ & $0$ & $c$ & $b$ & $c$%
\end{tabular}%
\end{equation*}

This hemiring has only one $h$-ideal $A=R$. Obviously this $h$-ideal is $h$%
-idempotent.

For any fuzzy ideal $\lambda$ of $R$ and any $x\in R$ we have $%
\lambda(0)\geq\lambda(x)\geq\lambda(a)$. Indeed, $\lambda(0)=\lambda(0x)\geq%
\lambda(x)=\lambda(xa)\geq\lambda(a)$. This together with $%
\lambda(a)=\lambda(b+b)\geq\lambda(b)\wedge\lambda(b)=\lambda(b)$ implies $%
\lambda(a)=\lambda(b)$. Consequently, $\lambda(c)=\lambda(a+b)\geq\lambda(a)%
\wedge\lambda(b)=\lambda(b)$. Therefore $\lambda(0)\geq\lambda(c)\geq%
\lambda(b)=\lambda(a)$. Moreover, if $\lambda$ is a fuzzy $h$-ideal, then $%
c+0+a=0+a$, which implies $\lambda(c)\geq\lambda(0)\wedge\lambda(0)=%
\lambda(0)$. Thus $\lambda(0)=\lambda(c)\geq\lambda(b)=\lambda(a)$ for every
fuzzy $h$-ideal of this hemiring.

Now we prove that each fuzzy $h$-ideal of $R$ is idempotent. Since $%
\lambda\odot_{h}\lambda\leq\lambda$ always, so we have to show that $%
\lambda\odot_{h}\lambda\geq\lambda$. Obviously, for every $x\in R$ we have
\begin{equation*}
\arraycolsep=.5mm%
\begin{array}{ll}
(\lambda\odot_{h}\lambda)(x) & =\!\underset{ x+\sum
\limits_{i=1}^{m}a_{i}b_{i}+z=\sum\limits_{j=1}^{n}a_{j}^{\prime
}b_{j}^{\prime }+z}{\sup} \!\!\Big( \bigwedge\limits_{i=1}^m\big(
\lambda(a_{i})\wedge\lambda(b_{i})\big)\wedge
\bigwedge\limits_{j=1}^n\big(
\lambda(a_{j}^{\prime })\wedge\lambda(b_{j}^{\prime })\big) \Big) \\
& \geq\underset{x+cd+z=c^{\prime }d^{\prime }+z}{\sup}\big(
\lambda(c)\wedge\lambda(d)\wedge \lambda(c^{\prime
})\wedge\lambda(d^{\prime })\big)=\lambda(c)\wedge\lambda(d)\wedge
\lambda(c^{\prime })\wedge\lambda(d^{\prime }).
\end{array}
\end{equation*}

So, $x+cd+z=c^{\prime }d^{\prime }+z$ implies $(\lambda \odot _{h}\lambda
)(x)\geq \lambda (c)\wedge \lambda (d)\wedge \lambda (c^{\prime })\wedge
\lambda (d^{\prime })$. Hence $0+00+z=00+z\,$ implies $(\lambda \odot
_{h}\lambda )(0)\geq \lambda (0)$. Similarly $a+bb+z=bc+z$ implies $(\lambda
\odot _{h}\lambda )(a)\geq \lambda (b)\wedge \lambda (c)=\lambda (b)=\lambda
(a)$, $b+aa+z=bc+z\,$ implies $(\lambda \odot _{h}\lambda )(b)\geq \lambda
(a)\wedge \lambda (b)\wedge \lambda (c)=\lambda (b)$. Analogously, from $%
c+00+z=cc+z\,$ it follows $(\lambda \circ _{h}\lambda )(c)\geq \lambda
(0)\wedge \lambda (c)=\lambda (c)$. This proves that $(\lambda \odot
_{h}\lambda )(x)\geq \lambda (x)$ for every $x\in R$. Therefore $\lambda
\odot _{h}\lambda =\lambda $ for every fuzzy $h$-ideal of $R$, which, by
Theorem \ref{T6.5}, means that each fuzzy $h$-ideal of $R$ is semiprime.

Consider the following three fuzzy sets:
\begin{equation*}
\begin{array}{cc}
\lambda (0)=\lambda (c)=0.8, & \lambda (a)=\lambda (b)=0.4, \\
\mu (0)=\mu (c)=0.6, & \mu (a)=\mu (b)=0.5, \\
\delta (0)=\delta (c)=0.7, & \delta (a)=\delta (b)=0.45.%
\end{array}%
\end{equation*}%
These three fuzzy sets are idempotent fuzzy $h$-ideals. Since all fuzzy $h$%
-ideal of this hemiring are idempotent, by Proposition \ref{P4.3}, we have $%
\lambda \odot _{h}\mu =\lambda \wedge \mu $. Thus $(\lambda \odot
_{h}\mu )(0)=(\lambda \odot _{h}\mu )(c)=0.6$ and $(\lambda \odot
_{h}\mu )(a)=(\lambda \odot _{h}\mu )(b)=0.4.$ So, $\lambda \odot
_{h}\mu \leq \delta $ but neither $\lambda \leq \delta $ nor $\mu
\leq \delta $, that is $\delta $ is not an $h$-prime fuzzy
$h$-ideal.
\end{example}

Theorem \ref{T6.2} suggests the following definition of semiprime
fuzzy $h$-ideals.

\begin{definition}
\label{D6.8} A non-constant fuzzy $h$-ideal $\delta$ of $R$ is called
\textit{semiprime} (in the second sense) if for all $t\in [0,1]$ and $a,b\in
R$ the following condition is satisfied:

\smallskip

if $\delta(axb)\geq t$ for every $x\in R$ then $\delta(a)\geq t$ or $%
\delta(b)\geq t$.
\end{definition}

In other words, a non-constant fuzzy $h$-ideal $\delta$ is semiprime if from
the fact that $axb\in U(\delta;t)$ for every $x\in R$ it follows $a\in
U(\delta;t)$ or $b\in U(\delta;t)$. It is clear that any fuzzy $h$-ideal
semiprime in the first sense is semiprime in the second sense. The converse
is not true (see Example \ref{Ex5.5}).

\begin{theorem}
\label{T6.9} A non-constant fuzzy $h$-ideal $\delta$ of $R$ is semiprime in
the second sense if and only if each its proper level set $U(\delta;t)$ is a
semiprime $h$-ideal of $R$.
\end{theorem}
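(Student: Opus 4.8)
The plan is to mirror the proof of Theorem~\ref{T5.5}, replacing the prime characterization (Corollary~\ref{C5.2}) with the semiprime one (Theorem~\ref{T6.2}); concretely, everywhere the prime argument uses a pair $a,b$ with $aRb\subseteq P$ I will use a single element $a$ with $aRa\subseteq P$, which is the $b=a$ instance of the defining condition. Two ingredients do all the work: the Transfer Principle (Lemma~\ref{trans}), which guarantees that every non-empty level set $U(\delta;t)$ of a fuzzy $h$-ideal is itself an $h$-ideal of $R$, and Theorem~\ref{T6.2}, which says that an $h$-ideal $P$ is semiprime exactly when $aRa\subseteq P$ forces $a\in P$.

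For the forward implication, I would assume $\delta$ is semiprime in the second sense and fix an arbitrary proper level set $U(\delta;t)$, so that $\emptyset\ne U(\delta;t)\ne R$. By Lemma~\ref{trans} it is an $h$-ideal, and to check semiprimeness I take any $a\in R$ with $aRa\subseteq U(\delta;t)$. This means $\delta(axa)\ge t$ for every $x\in R$, whence the defining condition yields $\delta(a)\ge t$, i.e. $a\in U(\delta;t)$. Theorem~\ref{T6.2} then lets me conclude that $U(\delta;t)$ is a semiprime $h$-ideal of $R$.

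For the converse I would argue by contraposition, exactly as in Theorem~\ref{T5.5}. Suppose $\delta$ fails to be semiprime in the second sense; then there exist $a\in R$ and $t\in[0,1]$ with $\delta(axa)\ge t$ for all $x\in R$ but $\delta(a)<t$. The first condition gives $aRa\subseteq U(\delta;t)$, while $a\notin U(\delta;t)$ shows $U(\delta;t)\ne R$, and $\delta(0)\ge\delta(axa)\ge t$ shows $U(\delta;t)$ is non-empty; hence $U(\delta;t)$ is a proper level set to which the hypothesis applies. But then $U(\delta;t)$ is a semiprime $h$-ideal containing $aRa$ yet not $a$, contradicting Theorem~\ref{T6.2}. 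Therefore $\delta$ must be semiprime.

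The argument is essentially routine once the prime analogue is in hand, and I expect no real obstacle. The only points that need care are the bookkeeping that makes $U(\delta;t)$ genuinely proper in the converse (non-emptiness from $\delta(0)\ge t$ and properness from $a\notin U(\delta;t)$), so that the hypothesis on proper level sets can actually be invoked, together with the observation that the single-element semiprime condition of Theorem~\ref{T6.2} matches the relevant instance of the fuzzy defining condition.
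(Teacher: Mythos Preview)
Your proposal is correct and follows exactly the approach the paper indicates, namely mirroring the proof of Theorem~\ref{T5.5} with the semiprime characterization of Theorem~\ref{T6.2} in place of Corollary~\ref{C5.2}. Your added care in the converse direction---verifying that $U(\delta;t)$ is non-empty (via $\delta(0)\ge t$) and proper (via $a\notin U(\delta;t)$) before invoking the hypothesis---is a welcome bit of rigor that even the paper's proof of Theorem~\ref{T5.5} glosses over.
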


\begin{proof}
The proof is analogous to the proof of Theorem \ref{T5.5}.
\end{proof}

\begin{corollary}
\label{C6.10} A fuzzy set $\lambda_A$ defined in Proposition $\ref{P2.8}$ is
a semiprime fuzzy $h$-ideal of $R$ if and only if $A$ is a semiprime $h$%
-ideal of $R$.
\end{corollary}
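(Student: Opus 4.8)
The plan is to reduce everything to the level-set characterization of semiprime fuzzy $h$-ideals furnished by Theorem \ref{T6.9}, exactly as the analogous Corollary \ref{C5.7} follows from Theorem \ref{T5.5}. First I would dispose of the trivial case: if $A=R$ then $\lambda_A$ is the constant map $x\mapsto t$, so it is not a (non-constant) semiprime fuzzy $h$-ideal, while simultaneously $A=R$ is excluded from being a semiprime $h$-ideal by definition; thus both sides of the equivalence fail together, and I may assume $\emptyset\ne A\ne R$, in which case $\lambda_A$ is genuinely non-constant since $s<t$.

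Next I would invoke Proposition \ref{P2.8}, which already tells us that $\lambda_A$ is a fuzzy $h$-ideal of $R$ if and only if $A$ is an $h$-ideal of $R$; this secures the underlying $h$-ideal requirement present in both notions, so that it remains only to match up the semiprimeness conditions. For this I would determine the level sets $U(\lambda_A;r)=\{x\in R\mid \lambda_A(x)\ge r\}$ by a short case analysis on the threshold $r$: for $r\le s$ every element qualifies and $U(\lambda_A;r)=R$; for $s<r\le t$ exactly the elements of $A$ qualify, so $U(\lambda_A;r)=A$; and for $r>t$ no element qualifies, so $U(\lambda_A;r)=\emptyset$. Consequently the unique proper level set (neither empty nor all of $R$) of $\lambda_A$ is $A$ itself.

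Finally I would apply Theorem \ref{T6.9}: the non-constant fuzzy $h$-ideal $\lambda_A$ is semiprime in the second sense if and only if each of its proper level sets is a semiprime $h$-ideal of $R$. Since $A$ is the only proper level set, this is equivalent to $A$ being a semiprime $h$-ideal of $R$, which is precisely the claim.

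I expect no genuine obstacle here: the entire analytic content is already carried by Theorem \ref{T6.9} and Proposition \ref{P2.8}, and the only point demanding care is the bookkeeping of the three threshold ranges together with the boundary case $A=R$, where one must check that the constant function $\lambda_A$ and the excluded ideal $A=R$ make both sides of the biconditional false simultaneously.
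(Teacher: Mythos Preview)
Your proposal is correct and follows exactly the route the paper intends: the corollary is placed immediately after Theorem \ref{T6.9} with no separate proof, so the paper implicitly expects precisely the level-set computation and application of Theorem \ref{T6.9} (together with Proposition \ref{P2.8}) that you have written out, mirroring how Corollary \ref{C5.7} follows from Theorem \ref{T5.5}. Your explicit handling of the boundary case $A=R$ and the threshold analysis for $U(\lambda_A;r)$ simply make explicit what the paper leaves to the reader.
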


In view of the Transfer Principle (Lemma \ref{trans}) the second definition
of semiprime fuzzy $h$-ideals is better. Therefore fuzzy $h$-ideals which
are prime in the first sense should be called \textit{$h$-semiprime}.

\begin{proposition}
\label{P6.11} A non-constant fuzzy $h$-ideal $\delta$ of a commutative
hemiring $R$ with identity is semiprime if and only if $\,\delta(a^2)=%
\delta(a)$ for every $a\in R$.
\end{proposition}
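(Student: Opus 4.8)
The plan is to prove Proposition \ref{P6.11}, which states that a non-constant fuzzy $h$-ideal $\delta$ of a commutative hemiring $R$ with identity is semiprime if and only if $\delta(a^2)=\delta(a)$ for every $a\in R$. I would model this proof closely on the proof of Proposition \ref{P5.8}, since the semiprime condition in Definition \ref{D6.8} is the diagonal case ($a=b$) of the prime condition in Definition \ref{D5.4}, and the commutativity plus identity hypotheses let me collapse the quantified expression $\delta(axb)$ into $\delta(ab)$ (or here $\delta(a^2)$) just as before.

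For the forward direction, I would assume $\delta$ is semiprime and fix $a\in R$. Setting $\delta(a^2)=t$, I would show $\delta(axa)\geq t$ for every $x\in R$: using commutativity and the fuzzy ideal property (2), $\delta(axa)=\delta(xa^2)\geq\delta(a^2)=t$. The semiprime condition (with $b=a$) then forces $\delta(a)\geq t=\delta(a^2)$. The reverse inequality $\delta(a^2)=\delta(a\cdot a)\geq\delta(a)$ holds by property (2) of fuzzy ideals applied directly, so $\delta(a^2)=\delta(a)$.

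For the converse, I would assume $\delta(a^2)=\delta(a)$ for all $a$ and verify the semiprime condition of Definition \ref{D6.8}. Suppose $\delta(axa)\geq t$ for every $x\in R$. Substituting $x=1$, the identity element, gives $\delta(a\cdot 1\cdot a)=\delta(a^2)\geq t$, and by hypothesis $\delta(a)=\delta(a^2)\geq t$, which is exactly the required conclusion $\delta(a)\geq t$ (the ``or $\delta(b)\geq t$'' clause is automatic since $b=a$). This establishes that $\delta$ is semiprime in the second sense.

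I do not anticipate a serious obstacle here, since the argument is a direct specialization of Proposition \ref{P5.8} to the diagonal; the only point requiring mild care is making sure the definitional clauses line up, namely that Definition \ref{D6.8} has $b=a$ implicitly so that the single conclusion $\delta(a)\geq t$ suffices, and that the role of the identity is precisely to let me pass from the universally quantified $\delta(axa)$ to the single value $\delta(a^2)$. If anything is delicate, it is simply confirming that commutativity is genuinely used (in the step $\delta(axa)=\delta(xa^2)$) and that these hypotheses are exactly what is needed to avoid the gap illustrated by Example \ref{Ex5.5}, where semiprime-in-the-second-sense is strictly weaker without them.
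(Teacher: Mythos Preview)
Your proposal is correct and follows exactly the approach the paper indicates: the paper's own proof simply says ``similar to the proof of Proposition~\ref{P5.8},'' and you have spelled out precisely that specialization to the diagonal case $b=a$. The only cosmetic point is that your closing remark about Example~\ref{Ex5.5} is slightly off (that example concerns prime versus $h$-prime rather than the role of commutativity and identity), but this does not affect the argument.
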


\begin{proof}
The proof is similar to the proof of Proposition \ref{P5.8}.
\end{proof}

\section{Conclusion} In the study of fuzzy algebraic system,
the fuzzy ideals with special properties always play an important
role. In this paper we study those hemirings for which each fuzzy
$h$-ideal is idempotent. We characterize these hemirings in terms
of prime and semiprime fuzzy $h$-ideals. In the future we wanted
to study those hemirings for which each fuzzy one sided $h$-ideal
is idempotent and also those hemirings for which each fuzzy
$h$-bi-ideal is idempotent. We also want to establish a fuzzy
spectrum of hemirings.

We hope that the research along this direction can be continued, and our results presented in this paper have already constituted a platform for further discussion concerning the future development of hemirings and their applications to study fundamental concepts of the automata theory such as nondeterminism, for example.

\end{document}